\newcommand{\lr}[1]{\langle #1 \rangle}
\newcommand{\lra}{\leftrightarrow}
\newcommand{\bis}{\mathrel{\mathchoice%
{\raisebox{.3ex}{$\,
  \underline{\makebox[.7em]{$\leftrightarrow$}}\,$}}%
{\raisebox{.3ex}{$\,
  \underline{\makebox[.7em]{$\leftrightarrow$}}\,$}}%
{\raisebox{.2ex}{$\,
  \underline{\makebox[.5em]{\scriptsize$\leftrightarrow$}}\,$}}%
{\raisebox{.2ex}{$\,
  \underline{\makebox[.5em]{\scriptsize$\leftrightarrow$}}\,$}}}}
\newcommand{\TAUT}{\ensuremath{\texttt{TAUT}}}
\newcommand{\MP}{\ensuremath{\texttt{MP}}}
\newcommand{\REKw}{\ensuremath{\texttt{RE}\Delta}}
\newcommand{\EquiKw}{\ensuremath{{\Delta\texttt{Equ}}}}
\newcommand{\KwCon}{\ensuremath{\Delta\texttt{Con}}}
\newcommand{\KwDis}{\ensuremath{\Delta\texttt{Dis}}}
\newcommand{\KwTop}{\ensuremath{\Delta\texttt{Top}}}
\newcommand{\KwM}{\ensuremath{\Delta\texttt{M}}}
\newcommand{\KwC}{\ensuremath{\Delta\texttt{C}}}
\newcommand{\KwN}{\ensuremath{\Delta\texttt{N}}}
\newcommand{\KwD}{\ensuremath{\Delta\texttt{D}}}
\newcommand{\KwT}{\ensuremath{\Delta\texttt{T}}}
\newcommand{\SPLKw}{\ensuremath{\mathbf{K}^\Delta}}
\newcommand{\NCL}{\ensuremath{\mathbf{E}^\Delta}}
\newcommand{\ENCL}{\mathbf{E}^\Delta}
\newcommand{\CNCL}{\mathbf{M}^\Delta}
\newcommand{\MNCL}{\mathbf{C}^\Delta}
\newcommand{\CL}{\mathbf{CL}}
\newcommand{\BP}{\textbf{Prop}}
\newcommand{\M}{\ensuremath{\mathcal{M}}}
\newcommand{\F}{\ensuremath{\mathcal{F}}}
\renewcommand{\phi}{\varphi}
\newcommand{\red}[1]{\textcolor{red}{#1}}
\newcommand{\noteJF}[1]{~\red{(\textsf{Jie:} \texttt{#1})}}
\newcommand{\weg}[1]{}
\newtheorem{theorem}{Theorem}
\newtheorem{lemma}[theorem]{Lemma}
\newtheorem{definition}[theorem]{Definition}
\newtheorem{proposition}[theorem]{Proposition}
\newtheorem{example}[theorem]{Example}
\newtheorem{corollary}[theorem]{Corollary}
\begin{document}

\title{Neighborhood Contingency Logic:\\ A New Perspective\thanks{This research is supported by the youth project 17CZX053 of National Social Science Fundation of China. The author thanks Yanjing Wang for proposing the notion of quasi-filter structures and discussing on earlier versions of this manuscript. Thanks also go to two anonymous referees of NCML 2017 for their insightful comments and suggestions. An earlier version of the manuscript was presented on the conference of NCML 2017 at Zhejiang University in Oct. 2017.}}
%\title{Neighborhood Contingency Logic}
\date{}
\author{Jie Fan\\
\small School of Philosophy, Beijing Normal University  \\
\small fanjie@bnu.edu.cn}
\maketitle

\begin{abstract}
\weg{To say a proposition is contingent, if it is possibly true and it is possibly false; to say a proposition is non-contingency, if it is not contingent, i.e., if it is necessarily true or it is necessarily false. Contingency logic has been discussed heatedly and studied in the literature since Aristotle. However, all of these work are based on Kripke semantics. Contingency logic is \emph{not} a normal modal logic, because we do not have the validity $\Delta(\phi\to\psi)\to(\Delta\phi\to\Delta\psi)$. This suggests that neighborhood semantics may be a better semantics for this logic. In this paper, we give a suitable semantics for contingency logic in the neighborhood setting, in the sense that the interpretation of the non-contingency operator is consistent with its philosophical intuition. Based on the semantics, we show that contingency logic is less expressive than modal logic over many neighborhood models, but equally expressive as modal logic on other models, and that many properties for neighborhood frames definable in modal logic is undefinable in contingency logic, both of which are non-trivial. We propose a decidable axiomatization for classical contingency logic, and demonstrate that neighborhood semantics for contingency logic can be seen as an extension of Kripke semantics for that logic.}
%In Fan and van Ditmarsch~\cite{FanvD:neighborhood}, a neighborhood semantics for noncontingency is proposed, without adding any constraint into the underlying model. In the current, we interpret the modality as the standard necessity, but with specific constraints for the neighborhood models. This idea gives us a new perspective
%In this paper, we propose a new neighborhood semantics for contingency logic, by a strategy summed up as ``changing models but without changing semantics''. This new perspective is shown to be much better than the neighborhood semantics introduced in Fan and van Ditmarsch~\cite{FanvD:neighborhood}.
In this paper, we propose a new neighborhood semantics for contingency logic, by introducing a simple property in standard neighborhood models. This simplifies the neighborhood semantics given in Fan and van Ditmarsch~\cite{FanvD:neighborhood}, but does not change the set of valid formulas. Under this perspective, among various notions of bisimulation and respective Hennessy-Milner Theorems, we show that $c$-bisimulation is equivalent to nbh-$\Delta$-bisimulation in the literature, which guides us to understand the essence of the latter notion. This perspective also provides various frame definability and axiomatization results.
\end{abstract}

\noindent Keywords: contingency logic, neighborhood semantics, bisimulation, frame definability, axiomatization

\section{Introduction}

Under Kripke semantics, contingency logic ({\bf CL} for short) is non-normal, less expressive than standard modal logic ({\bf ML} for short), and the five basic frame properties (seriality, reflexivity, transitivity, symmetry, Eucludicity) cannot be defined in {\bf CL}. This makes the axiomatizations of {\bf CL} nontrivial: although there have been a mountain of work on the axiomatization problem since the 1960s~\cite{MR66,Humberstone95,DBLP:journals/ndjfl/Kuhn95,DBLP:journals/ndjfl/Zolin99,hoeketal:2004}, over $\mathcal{K}$, $\mathcal{D}$, $\mathcal{T}$, $4$, $5$ and any combinations thereof, no method in the cited work can uniformly handle all the five basic frame properties. This job has not been addressed until in~\cite{Fanetal:2015}, which also contains an axiomatization of {\bf CL} on $\mathcal{B}$ and its multi-modal version. This indicates that Kripke semantics may not be suitable for {\bf CL}.

Partly inspired by the above motivation (in particular, the non-normality of {\bf CL}), and partly by a weaker logical omniscience in Kripke semantics (namely, all theorems are known to be true or known to be false), a neighborhood semantics for {\bf CL} is proposed in~\cite{FanvD:neighborhood}, which interprets the non-contingency operator $\Delta$ in a way such that its philosophical intuition, viz. necessarily true or necessarily false, holds. However, under this (old) semantics, as shown in~\cite{FanvD:neighborhood}, {\bf CL} is still less expressive than {\bf ML} on various classes of neighborhood models, and many usual neighborhood frame properties are undefinable in {\bf CL}. Moreover, based on this semantics, \cite{Bakhtiarietal:2017} proposes a bisimulation (called `nbh-$\Delta$-bisimulation' there) to characterize {\bf CL} within {\bf ML} and within first-order logic ({\bf FOL} for short), but the essence of the bisimulation seems not quite clear.

In retrospect, no matter whether the semantics for {\bf CL} is Kripke-style or neigborhood-style in the sense of~\cite{FanvD:neighborhood}, there is an asymmetry between the syntax and models of {\bf CL}: on the one hand, the language is too weak, since it is less expressive than {\bf ML} over various model classes; on the other hand, the models are too strong, since its models are the same as those of {\bf ML}. This makes it hard to handle {\bf CL}.\footnote{Analogous problem occurs in the setting of knowing-value logic~\cite{wangetal:2013,WF:2014}.}

Inspired by \cite{GuWang:2016}, we simplify the neighborhood semantics for {\bf CL} in~\cite{FanvD:neighborhood}, and meanwhile keep the logic (valid formulas) the same by restricting models. This can weaken the too strong models so as to balance the syntax and models for {\bf CL}. Under this new perspective, we can gain a lot of things, for example, bisimulation notions and their corresponding Hennessy-Milner Theorems, and frame definability. Moreover, we show that one of bisimulation notions is equivalent to the notion of nbh-$\Delta$-bisimulation, which helps us understand the essence of nbh-$\Delta$-bisimulation. We also obtain some simple axiomatizations.

%Seeing the neighborhood semantics of ML as a reference, the idea in~\cite{FanvD:neighborhood} can be summed up as follows: change the semantics but keep models the same. Inspired by~\cite{GuWang:2016}, we reverse the previous idea, and now change the models but keep the semantics the same. This can weaken the too strong models so as to balance the syntax and semantics for CL. Under this new perspective, we can gain a lot of things, for example, bisimulation notions and their corresponding van Benthem characterization Theorem, and frame definability. Moreover, we show that one bisimulation notion is equivalent to the notion of nbh-$\Delta$-bisimulation, which helps us understand the intuition of nbh-$\Delta$-bisimulation. Also, we can obtain some simple axiomatizations.

\section{Preliminaries}

\subsection{Language and old neighborhood semantics}

First, we introduce the language and the old neighborhood semantics of {\bf CL}. Fix a countable set $\textbf{Prop}$ of propositional variables. The language of {\bf CL}, denoted $\mathcal{L}_\Delta$, is an extension of propositional logic with a sole primitive modality $\Delta$, where $p\in\textbf{Prop}$.
$$\phi::=p\mid \neg\phi\mid (\phi\land\phi)\mid \Delta\phi$$
$\Delta\phi$ is read ``it is non-contingent that $\phi$''. $\nabla\phi$, read ``it is contingent that $\phi$'', abbreviates $\neg\Delta\phi$.

A neighborhood model for $\mathcal{L}_\Delta$ is defined as that for the language of {\bf ML}. That is, to say $\M=\lr{S,N,V}$ is a neighborhood model, if $S$ is a nonempty set of states, $N:S\to 2^{2^S}$ is a neighborhood function assigning each state in $S$ a set of neighborhoods, and $V:\textbf{Prop}\to 2^S$ is a valuation assigning each propositional variable in $\textbf{Prop}$ a set of states in which it holds. A neighborhood frame is a neighborhood model without any valuation.

There are a variety of neighborhood properties. The following list is taken from~\cite[Def.~3]{FanvD:neighborhood}.
\begin{definition}[Neighborhood properties]\label{def.properties}\

$(n)$: $N(s)$ \emph{contains the unit}, if $S\in N(s)$. % Sometimes we say $N(s)$\emph{ satisfies condition (n)}.

$(r)$: $N(s)$ \emph{contains its core}, if $\bigcap N(s)\in N(s)$\weg{, where $\bigcap N(s)=\bigcap\{X\mid X\in N(s)\}$}. % Sometimes we say $N(s)$ \emph{satisfies (r)}.

$(i)$: $N(s)$ \emph{is closed under intersections}, if $X,Y\in N(s)$ implies $X\cap Y\in N(s)$. % Sometimes we say $N(s)$\emph{ satisfies condition (i)}.

$(s)$: $N(s)$ is \emph{supplemented}, or \emph{closed under supersets}, if $X\in N(s)$ and $X\subseteq Y\subseteq S$ implies $Y\in N(s)$. We also call this property `monotonicity'. % Sometimes we say $N(s)$\emph{ satisfies condition (s)}.

$(c)$: $N(s)$ is\emph{ closed under complements}, if $X\in N(s)$ implies $S\backslash X\in N(s)$. % Sometimes we say $N(s)$\emph{ satisfies condition (c)}.

$(d)$: $X\in N(s)$ implies $S\backslash X\notin N(s)$.

$(t)$:  $X\in N(s)$ implies $s\in X$.

$(b)$: $s\in X$ implies $\{u\in S\mid S\backslash X\notin N(u)\}\in N(s)$.

$(4)$:  $X\in N(s)$ implies $\{u\in S \mid X\in N(u)\}\in N(s)$.

$(5)$:  $X\notin N(s)$ implies $\{u\in S \mid X\notin N(u)\}\in N(s)$.
\end{definition}

Frame $\mathcal{F}=\lr{S,N}$ (and the corresponding model) possesses such a property P, if $N(s)$ has the property P for each $s\in S$, and we call the frame (resp. the model) P-frame (resp. P-model). %Frame $\mathcal{F}$ is \emph{augmented}, if $\mathcal{F}$ satisfies $(r)$ and $(s)$.

Given a neighborhood model $\M=\lr{S,N,V}$ and $s\in S$, the old neighborhood semantics of $\mathcal{L}_\Delta$~\cite{FanvD:neighborhood} is defined as follows, where $\phi^{\M_\Vdash}=\{t\in S\mid \M,t\Vdash\phi \}$.%, intuitively meaning the proposition expressed by $\phi$.
\[\begin{array}{lll}
\M,s\Vdash p&\text{iff}&s\in V(p)\\
\M,s\Vdash \neg\phi&\text{iff}&\M,s\nVdash\phi\\
\M,s\Vdash\phi\land\psi&\text{iff}&\M,s\Vdash\phi\text{ and }\M,s\Vdash\psi\\
\M,s\Vdash\Delta\phi & \text{iff}& \phi^{\M_\Vdash} \in N(s)\text{ or }(\neg\phi)^{\M_\Vdash}\in N(s)\\
\end{array}\]

%We say that $(\M,s)$ and $(\M',s')$ are $\mathcal{L}_\Delta$-equivalent, written $(\M,s)\equiv_{\mathcal{L}_\Delta}(\M',s')$, if for all $\phi\in\mathcal{L}_\Delta$, $\M,s\Vdash \phi$ iff $\M',s'\Vdash\phi$.
\weg{where the semantics of the operator $\Delta$ is inspired by the neighborhood semantics for $\Box$ and the philosophical intuition of non-contingency. Recall that the neighborhood semantics for $\Box$ is defined as follows.
\[
\begin{array}{lll}
\M,s\Vdash\Box\phi&\text{iff}&\phi^{\M_\Vdash}\in N(s)\\
\end{array}
\]
Besides, non-contingency intuitively means necessarily true or necessarily false. }%Note that we do not change the model, since it is same as the neighborhood model for ML, but the semantics for $\Delta$ is different from (in fact weaker than) that for $\Box$.

\subsection{Existing results on old neighborhood semantics}

Under the above old neighborhood semantics, it is shown in~\cite[Props.2-7]{FanvD:neighborhood} that on the class of $(t)$-models or the class of $(c)$-models, $\mathcal{L}_\Delta$ is equally expressive as $\mathcal{L}_\Box$; however, on other class of models in Def.~\ref{def.properties}, $\mathcal{L}_\Delta$ is less expressive than $\mathcal{L}_\Box$; moreover, {\em none} of frame properties in the above list is definable in {\bf CL}.

\weg{Under this old neighborhood semantics, it is shown in~\cite[Props.2-6]{FanvD:neighborhood} that
\begin{proposition}
On the class of $(t)$-models or the class of $(c)$-models, $\mathcal{L}_\Delta$ is equally expressive as $\mathcal{L}_\Box$; however, on other class of models, $\mathcal{L}_\Delta$ is less expressive than $\mathcal{L}_\Box$.
\end{proposition}

And it is also shown in~\cite[Prop.7]{FanvD:neighborhood} that any frame property in the above list is undefinable in CL.
\begin{proposition}
The frame properties $(n)$, $(r)$, $(i)$, $(s)$, $(c)$, $(d)$, $(t)$, $(b)$, $(4)$, and $(5)$ are not definable in $\mathcal{L}_\Delta$.
\end{proposition}}

Based on the above semantics for {\bf CL}, a notion of bisimulation is proposed in~\cite{Bakhtiarietal:2017}, which is inspired by the definition of {\em precocongruences} in~\cite{hansenetal:2009} and the old neighbourhood semantics of $\Delta$.

\begin{definition}[nbh-$\Delta$-bisimulation]
Let $\M=\lr{S,N,V}$ and $\M'=\lr{S',N',V'}$ be neighborhood models. A nonempty relation $Z\subseteq S\times S'$ is a {\em nbh-$\Delta$-bisimulation} between $\M$ and $\M'$, if for all $(s,s')\in Z$,

\textbf{(Atoms)} $s\in V(p)$ iff $s'\in V'(p)$ for all $p\in \BP$;

\textbf{(Coherence)} if the pair $(U,U')$ is $Z$-coherent,\footnote{Let $R$ be a binary relation. We say $(U,U')$ is {\em $R$-coherent}, if for any $(x,y)\in R$, we have $x\in U$ iff $y\in U'$. We say $U$ is $R$-closed, if $(U,U)$ is $R$-coherent. It is obvious that $(\emptyset,\emptyset)$ is $R$-coherent for any $R$.} then
$$(U\in N(s)\text{ or }S\backslash U\in N(s))\text{ iff }(U'\in N'(s') \text{ or }S'\backslash U'\in N'(s')).$$
$(\M,s)$ and $(\M',s')$ is {\em nbh-$\Delta$-bisimilar}, notation $(\M,s)\sim_\Delta(\M',s')$, if there is a nbh-$\Delta$-bisimulation between $\M$ and $\M'$ containing $(s,s')$.\footnote{In fact, the notion of nbh-$\Delta$-bisimilarity is defined in a more complex way in~\cite{Bakhtiarietal:2017}. It is easy to show that our definition is equivalent to, but simpler than, that one.}
\end{definition}

Although it is inspired by both the definition of {\em precocongruences} in~\cite{hansenetal:2009} and the old neighbourhood semantics of $\Delta$, the essence of nbh-$\Delta$-bisimulation seems not so clear.

It is then proved that Hennessy-Milner Theorem holds for nbh-$\Delta$-bisimulation. For this, a notion of $\Delta$-saturated model is required.
\begin{definition}[$\Delta$-saturated model]\cite[Def.~11]{Bakhtiarietal:2017}\label{def.delta-satu-model}
Let $\M=\lr{S,N,V}$ be a neighborhood model. A set $X\subseteq S$ is $\Delta$-compact, if every set of $\mathcal{L}_\Delta$-formulas that is finitely satisfiable in $X$ is itself also satisfiable in $X$. $\M$ is said to be {\em $\Delta$-saturated}, if for all $s\in S$ and all $\equiv_{\mathcal{L}_\Delta}$-closed neighborhoods $X\in N(s)$, both $X$ and $S\backslash X$ are $\Delta$-compact.
\end{definition}

\begin{theorem}[Hennessy-Milner Theorem for nbh-$\Delta$-bisimulation]~\cite[Thm.1]{Bakhtiarietal:2017}\label{thm.hm-nbh-delta-bis}
On $\Delta$-saturated models $\M$ and $\M'$ and states $s$ in $\M$ and $s'$ in $\M'$, if $(\M,s)\equiv_{\mathcal{L}_\Delta}(\M',s')$, then $(\M,s)\sim_\Delta(\M',s').$
%$$(\M,s)\equiv_{\mathcal{L}_\Delta}(\M',s')\text{ iff }(\M,s)\sim_\Delta(\M',s').$$
\end{theorem}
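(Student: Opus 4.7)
Let $Z$ be modal equivalence $\equiv_{\mathcal{L}_\Delta}$, viewed as a relation between the state spaces of $\M$ and $\M'$. Since $(s,s')\in Z$ by assumption, it suffices to verify that $Z$ is itself a nbh-$\Delta$-bisimulation. The \textbf{(Atoms)} clause is immediate from modal equivalence restricted to propositional letters.

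For \textbf{(Coherence)}, assume $(U,U')$ is $Z$-coherent. By the symmetry between $\M$ and $\M'$ and between the two disjuncts of the biconditional, it suffices to show: if $U\in N(s)$, then $U'\in N'(s')$ or $S'\backslash U'\in N'(s')$. The strategy is to exhibit a formula $\phi$ with $\phi^{\M_\Vdash}=U$ and $\phi^{\M'_\Vdash}=U'$. Once such a $\phi$ is in hand, $U\in N(s)$ gives $\M,s\Vdash\Delta\phi$; the hypothesis $s\equiv_{\mathcal{L}_\Delta}s'$ then yields $\M',s'\Vdash\Delta\phi$, so $U'=\phi^{\M'_\Vdash}\in N'(s')$ or $S'\backslash U'=(\neg\phi)^{\M'_\Vdash}\in N'(s')$, as desired.

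To construct $\phi$, I use $\Delta$-saturation. First I reduce to the case where $U$ is $\equiv_{\mathcal{L}_\Delta}$-closed in $\M$ (and $U'$ in $\M'$), so that Definition~\ref{def.delta-satu-model} provides $\Delta$-compactness of $U$ and $S\backslash U$, and likewise on the primed side. The separating formula is then extracted by a standard compactness argument on the set $\Theta=\{\phi\mid U\subseteq\phi^{\M_\Vdash}\}$ of formulas true throughout $U$: if no finite conjunction from $\Theta$ cut $U$ out exactly, then $\Theta$ would be finitely satisfiable in $S\backslash U$, hence realised there by $\Delta$-compactness of $S\backslash U$, producing a point of $S\backslash U$ satisfying every formula true throughout $U$, which $\Delta$-compactness of $U$ promotes to full modal equivalence with a point of $U$, contradicting the $\equiv$-closure of $U$. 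The companion property $\phi^{\M'_\Vdash}=U'$ follows from the $Z$-coherence of $(\phi^{\M_\Vdash},\phi^{\M'_\Vdash})$ (formulas respect modal equivalence pointwise) combined with $Z$-coherence of $(U,U')$ and the $\equiv$-closure of $U'$, with any state in $S'$ lacking a $Z$-partner in $S$ handled by invoking $\Delta$-saturation of $\M'$ to import the corresponding formula-witness from the $\M$-side.

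The main obstacle is precisely the treatment of equivalence classes realised in only one of the two models. $Z$-coherence pins $U$ and $U'$ down only on mutually realised classes, and the extension of the extracted formula in the opposite model could in principle spill into unmatched classes. Ruling this out, while simultaneously preserving $U\in N(s)$ under passage to $\equiv_{\mathcal{L}_\Delta}$-closure, is the technical heart of the argument, and it is exactly here that the full strength of Definition~\ref{def.delta-satu-model} is used; the subsequent compactness step producing the separating formula is routine once these closure conditions are in place.
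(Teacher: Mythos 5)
This theorem is not proved in the paper at all: it is quoted from Bakhtiari, van Ditmarsch and Hansen (the cited Thm.~1), so there is no in-paper proof to measure your argument against. Judged on its own, your proposal is a plan rather than a proof: the step you yourself call ``the technical heart of the argument'' --- handling $Z$-coherent pairs $(U,U')$ whose components meet $\equiv_{\mathcal{L}_\Delta}$-classes realised in only one of the two models --- is never carried out, and the route you sketch for it does not work. Taking $Z=\,\equiv_{\mathcal{L}_\Delta}\,\subseteq S\times S'$, \textbf{(Coherence)} must be checked for \emph{every} $Z$-coherent pair, and $Z$-coherence constrains $U$ only on states of $S$ that have an $\equiv_{\mathcal{L}_\Delta}$-partner in $S'$; hence a neighborhood $U\in N(s)$ occurring in a coherent pair need not be $\equiv_{\mathcal{L}_\Delta}$-closed inside $\M$. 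For such $U$ no formula $\phi$ can have $\phi^{\M_\Vdash}=U$, since every formula extension is $\equiv_{\mathcal{L}_\Delta}$-closed; and your proposed ``reduction to the case where $U$ is $\equiv_{\mathcal{L}_\Delta}$-closed \ldots{} while preserving $U\in N(s)$'' is unsupported: $N(s)$ is an arbitrary family with no closure properties, so the $\equiv$-closure of $U$ need not lie in $N(s)$, and Definition~\ref{def.delta-satu-model} supplies $\Delta$-compactness only for sets that are \emph{already} $\equiv$-closed members of $N(s)$, so it cannot be what effects the reduction. The mirror-image difficulty on the primed side (matching $\phi^{\M'_\Vdash}$ with $U'$ on states of $S'$ lacking partners in $S$) is likewise gestured at rather than argued. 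The one piece that really is routine --- extracting, for an $\equiv$-closed $U\in N(s)$ with $U$ and $S\setminus U$ both $\Delta$-compact, a finite conjunction from $\Theta$ with extension exactly $U$ --- you do handle correctly.

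The deferred step is not a removable technicality under the definitions as stated in this paper. Take $S=\{s,a,b\}$ with $V(p)=\{a,b\}$, $N(s)=\{\{a\}\}$, $N(a)=N(b)=\emptyset$, and $S'=\{s'\}$ with $V'(p)=\emptyset$, $N'(s')=\emptyset$. Both models are finite, hence $\Delta$-saturated; $a\equiv_{\mathcal{L}_\Delta}b$, so no truth set equals $\{a\}$ or $\{s,b\}$, whence $s$ and $s'$ both satisfy $\nabla\phi$ for every $\phi$ and $s\equiv_{\mathcal{L}_\Delta}s'$. Yet \textbf{(Atoms)} forces any candidate bisimulation to be $\{(s,s')\}$, the pair $(\{a\},\emptyset)$ is $\{(s,s')\}$-coherent, and \textbf{(Coherence)} fails for it. So no argument that works only with the cross-model relation $\equiv_{\mathcal{L}_\Delta}\subseteq S\times S'$ can succeed; in the cited source the bisimilarity notion also identifies equivalent states \emph{within} each model (this is the ``more complex'' definition the paper's own footnote alludes to), and that identification is exactly what excludes sets like $\{a\}$ from the coherent pairs. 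Your outline does not engage with this, and that is where it breaks.
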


%The following characterization theorem can be found in~\cite[Thm.2-3]{Bakhtiarietal:2017}.
\weg{\begin{theorem}\label{thm.nbhbis-vanbenthem}\cite[Thm.2, Thm.3]{Bakhtiarietal:2017}
A modal formula is equivalent to an $\mathcal{L}_\Delta$-formula over the class of neighborhood models iff it is invariant under nbh-$\Delta$-bisimulation, and a first-order $\mathcal{L}_1$-formula is equivalent to an $\mathcal{L}_\Delta$-formula over the class of neighborhood models iff it is invariant under nbh-$\Delta$-bisimulation.
\end{theorem}}

\section{A new semantics for CL}
\weg{Throughout this paper, we let $\textbf{Prop}$ to be the set of propositional variables and $p\in\textbf{Prop}$. The language $\CL$ of contingency logic is defined recursively as follows:
$$\phi::=p\mid \neg\phi\mid (\phi\land\phi)\mid \Delta\phi$$

Let $\M=\lr{S,N,V}$ and let $\phi^\M=\{s\in S\mid \M,s\vDash\phi \}.$

\[\begin{array}{lll}
\M,s\Vdash\Delta\phi &\text{iff}& \phi^\M \in N(s)\\
\M,s\Vdash^o\Delta\phi & \text{iff}& \phi^\M \in N(s)\text{ or }(\neg\phi)^\M\in N(s)\\
\M,s\Vdash\Box\phi &\text{iff}& \phi^\M \in N_\Box(s)\\
\end{array}\]
Where the superscript $o$ stands for the Original neighborhood semantics proposed in~\cite{FanvD:neighborhood}.}

%If we only see $\Delta$ as a symbol without any reading, then the neighborhood semantics can be seen as the one for standard modal logic, which characterizes {\em necessity}. But here we read $\Delta$ as non-contingency. As noted by Aristotle, contingent is neither necessary nor impossible, thus non-contingent means necessary or impossible. Based on this, we should have that $\Delta\phi\lra\Delta\neg\phi$ is a validity. But the above semantics cannot provide this specification.

As mentioned above, there is an asymmetry between the syntax and neighborhood models of {\bf CL}, which makes it hard to tackle {\bf CL}. In this section, we propose a new neighborhood semantics for this logic. This semantics is simpler than the old one, but the two semantics are equivalent in that their logics (valid formulas) are the same.

%As mentioned before, the idea of the old neighborhood semantics is changing the neighborhood semantics (of ML) but keeping the neighborhood models (of ML) the same. In this section, we propose a new neighborhood semantics, based on the strategy called ``changing the neighborhood models, but keeping the neighborhood semantics the same''. The reason of changing the neighborhood models is that we need to conform to the philosophical intuition of noncontingency. This idea is exactly a reverse to that of old one.
%The idea of the new neighborhood semantics is a reverse to that of the old one. More precisely, we do not change the semantics, which means we still adopt the neighborhood semantics for $\Box$; however, in order to conform to the philosophical intuition of noncontingency, we change the models.

The new neighborhood semantics $\Vvdash$ and the old one $\Vdash$ differ only in the case of non-contingency operator.
\[\begin{array}{lll}
\M,s\Vvdash\Delta\phi &\text{iff}& \phi^\M \in N(s),\\
\end{array}\]
where $\phi^\M=\{t\in\M\mid \M,t\Vvdash\phi\}$. To say two models with the same domain are {\em pointwise equivalent}, if every world in both models satisfies the same formulas.

%The new neighborhood semantics of $\Delta$ is the same as that of the necessity operator $\Box$. But there is an important distinction between non-contingency and necessity. On the one hand, it is well-known that necessarily true is not equivalent to necessarily false; on the other hand, it has also been well-known since Aristotle that contingent is neither necessary nor impossible, thus non-contingent means necessary or impossible, which implies that we should have the validity $\Delta\phi\lra\Delta\neg\phi$.

We hope that although we change the semantics, the validities are still kept the same as the old one. So how to make it out? Recall that non-contingency means necessarily true or necessarily false, which implies that $\Delta p\lra\Delta\neg p$ should be valid. However, although the formula is indeed valid under the old neighborhood semantics, it is invalid under the new one\weg{, which also indicate that the philosophical intuition is not persisted}. In order to make this come about, we need make some restriction to the models. Look at a proposition first.

\begin{proposition}\label{prop.def-c}
Under the new semantics, $\Delta p\lra\Delta\neg p$ defines the property $(c)$.
\end{proposition}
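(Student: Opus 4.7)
The plan is a standard frame definability argument with two directions, both under the new semantics $\Vvdash$ where $\M,s\Vvdash\Delta\phi$ iff $\phi^\M\in N(s)$.

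For the direction from $(c)$ to validity, I would fix a frame $\F=\lr{S,N}$ satisfying $(c)$, an arbitrary valuation $V$ giving a model $\M$, and a state $s$. Then $\M,s\Vvdash\Delta p$ iff $V(p)\in N(s)$, and since $(\neg p)^\M=S\setminus V(p)$, we have $\M,s\Vvdash\Delta\neg p$ iff $S\setminus V(p)\in N(s)$. Property $(c)$ applied to $X=V(p)$ (and, noting that complementation is involutive, applied to $X=S\setminus V(p)$ for the converse implication) gives the equivalence of these two conditions, establishing validity at $s$. Since $s$ and $V$ were arbitrary, $\Delta p\lra\Delta\neg p$ is valid on $\F$.

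For the contrapositive of the other direction, I would assume $\F$ fails $(c)$, so fix $s\in S$ and $X\subseteq S$ with $X\in N(s)$ but $S\setminus X\notin N(s)$. Define a valuation $V$ by $V(p)=X$ (with the values of other propositional variables arbitrary). In the resulting model $\M$, we get $p^\M=X\in N(s)$, hence $\M,s\Vvdash\Delta p$, but $(\neg p)^\M=S\setminus X\notin N(s)$, hence $\M,s\nVvdash\Delta\neg p$. Therefore $\Delta p\lra\Delta\neg p$ fails at $s$, so it is not valid on $\F$.

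There is no real obstacle here: the only thing to be mindful of is that $(c)$ as stated in Definition~\ref{def.properties} is a one-way implication ($X\in N(s)$ implies $S\setminus X\in N(s)$), but since $S\setminus(S\setminus X)=X$, iterating it yields the biconditional that matches the biconditional shape of $\Delta p\lra\Delta\neg p$. This symmetry is what makes the first direction go through cleanly.
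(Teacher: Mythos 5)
Your proposal is correct and follows essentially the same argument as the paper: the $(c)$-to-validity direction applies property $(c)$ to $p^\M$ and to its complement (using involutivity of complementation for the converse implication), and the other direction is the same contrapositive construction with $V(p)=X$ for a witness $X$ to the failure of $(c)$. No gaps.
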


\begin{proof}
Let $\F=\lr{S,N}$ be a neighborhood frame.

First, suppose $\F$ possesses $(c)$, we need to show $\F\Vvdash\Delta p\lra\Delta\neg p$. For this, assume any model $\M$ based on $\F$ and $s\in S$ such that $\M,s\Vvdash\Delta p$, thus $p^\M\in N(s)$. By $(c)$, $S\backslash p^\M\in N(s)$, i.e., $(\neg p)^\M\in N(s)$, which means exactly $\M,s\Vvdash\Delta\neg p$. Now assume $\M,s\Vvdash\Delta\neg p$, we have $(\neg p)^\M\in N(s)$, that is $S\backslash p^\M\in N(s)$. By $(c)$, $S\backslash(S\backslash p^\M)\in N(s)$, i.e. $p^\M\in N(s)$, and thus $\M,s\Vvdash\Delta p$. Hence $\M,s\Vvdash\Delta p\lra\Delta\neg p$, and therefore $\F\Vvdash\Delta p\lra\Delta\neg p$.

Conversely, suppose $\F$ does not possess $(c)$, we need to show $\F\not\Vvdash\Delta p\lra\Delta\neg p$. By supposition, there exists $X$ such that $X\in N(s)$ but $S\backslash X\notin N(s)$. Define a valuation $V$ on $\F$ as $V(p)=X$. We have now $p^\M=V(p)\in N(s)$, thus $\M,s\Vvdash\Delta p$. On the other side, $V(\neg p)=S\backslash X\notin N(s)$, thus $\M,s\not\Vvdash \Delta\neg p$. Hence $\M,s\not\Vvdash\Delta p\to\Delta\neg p$, and therefore $\F\not\Vvdash\Delta p\lra\Delta\neg p$.
\end{proof}

This means that in order to guarantee the validity $\Delta p\lra\Delta\neg p$ under new semantics, we (only) need to `force' the model to have the property $(c)$. Thus from now on, we assume $(c)$ to be the {\em minimal} condition of a neighborhood model, and call this type of model `$c$-models'.\weg{\footnote{This assumption is reasonable, because the formula $\Delta\phi\lra\Delta\neg\phi$ will enable the canonical model to have the property $(c)$, as in~\cite{FanvD:neighborhood}.}}

\begin{definition}[$c$-structures]
A model is a {\em $c$-model}, if it has the property $(c)$; intuitively, if a proposition is non-contingent at a state in the domain, so is its negation. A frame is a {\em $c$-frame}, if the models based on it are $c$-models.
\end{definition}

%We could also have assumed all neighborhood models satisfy $(c)$. But as we will also talk about neighborhood models that does not have $(c)$, we use the term `$c$-model' instead whenever the model satisfies $(c)$.
%In what follows we will always assume a neighborhood model satisfy $(c)$.

The following proposition states that on $c$-models, the new neighborhood semantics and the old one coincide with each other in terms of $\mathcal{L}_\Delta$ satisfiability.%the satisfiability of $\mathcal{L}_\Delta$ formulas under the new neighborhood semantics coincides with that of $\mathcal{L}_\Delta$ formulas under the old one.
\begin{proposition}\label{prop.satis-same}
Let $\M=\lr{S,N,V}$ be a $c$-model. Then for all $\phi\in\mathcal{L}_\Delta$, for all $s\in S$, we have $\M,s\Vvdash\phi\iff\M,s\Vdash\phi$, i.e., $\phi^\M=\phi^{\M_\Vdash}$.
\end{proposition}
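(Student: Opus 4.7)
The plan is a straightforward induction on the complexity of $\phi\in\mathcal{L}_\Delta$, with the only interesting case being the modal one, where property $(c)$ will do all the work.

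First I would handle the base case $\phi=p$ and the Boolean cases $\phi=\neg\psi$ and $\phi=\psi\land\chi$. These are immediate because the two semantics $\Vvdash$ and $\Vdash$ are defined identically on atomic and Boolean formulas, so the induction hypothesis transfers directly.

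The main case is $\phi=\Delta\psi$. Unpacking the two clauses, I need to show that $\psi^\M\in N(s)$ iff $\psi^{\M_\Vdash}\in N(s)$ or $(\neg\psi)^{\M_\Vdash}\in N(s)$. By the induction hypothesis applied to $\psi$ (and hence also to $\neg\psi$), we have $\psi^\M=\psi^{\M_\Vdash}$ and $(\neg\psi)^\M=(\neg\psi)^{\M_\Vdash}=S\backslash\psi^\M$. So the equivalence reduces to: $\psi^\M\in N(s)$ iff $\psi^\M\in N(s)$ or $S\backslash\psi^\M\in N(s)$. The left-to-right direction is trivial. For the right-to-left direction, if $\psi^\M\in N(s)$ we are done; otherwise $S\backslash\psi^\M\in N(s)$, and then invoking property $(c)$ of the $c$-model $\M$ gives $S\backslash(S\backslash\psi^\M)=\psi^\M\in N(s)$.

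There is no real obstacle here: the asymmetry between the two clauses (the old semantics has the extra disjunct $(\neg\phi)^{\M_\Vdash}\in N(s)$) is precisely what property $(c)$ neutralizes, which is exactly why $(c)$ was singled out as the minimal condition via Proposition \ref{prop.def-c}. The only small point worth being careful about is making the induction hypothesis symmetric in $\psi$ and $\neg\psi$ so that both set-equalities $\psi^\M=\psi^{\M_\Vdash}$ and $(\neg\psi)^\M=(\neg\psi)^{\M_\Vdash}$ are available when rewriting the old clause.
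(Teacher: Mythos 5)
Your proof is correct and follows essentially the same route as the paper's: induction on $\phi$ with $\Delta\psi$ as the only nontrivial case, using the induction hypothesis to identify the truth sets under the two semantics and property $(c)$ to collapse the extra disjunct $(\neg\psi)^{\M_\Vdash}\in N(s)$ of the old clause. Your version merely spells out more explicitly the step the paper compresses into ``since $\M$ is a $c$-model, we can obtain $\phi^{\M_\Vdash}\in N(s)$.''
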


\begin{proof}
By induction on $\phi\in\mathcal{L}_\Delta$. The only nontrivial case is $\Delta\phi$.

First, suppose $\M,s\Vvdash\Delta\phi$, then $\phi^\M\in N(s)$. By induction hypothesis, $\phi^{\M_\Vdash}\in N(s)$. Of course, $\phi^{\M_\Vdash}\in N(s)$ or $(\neg\phi)^{\M_\Vdash}\in N(s)$. This entails that $\M,s\Vdash\Delta\phi$.

Conversely, assume $\M,s\Vdash\Delta\phi$, then $\phi^{\M_\Vdash}\in N(s)$ or $(\neg\phi)^{\M_\Vdash}\in N(s)$. Since $\M$ is a $c$-model, we can obtain $\phi^{\M_\Vdash}\in N(s)$. By induction hypothesis, $\phi^\M\in N(s)$.  Therefore, $\M,s\Vvdash\Delta\phi$.
\end{proof}

\begin{definition}[$c$-variation] Let $\M=\lr{S,N,V}$ be a neighborhood model. We say $c(\M)$ is a {\em $c$-variation} of $\M$, if $c(\M)=\lr{S,cN,V}$, where for all $s\in S$, $cN(s)=\{X\subseteq S:X\in N(s) \text{ or }S\backslash X\in N(s)\}$.
\end{definition}

The definition of $cN$ is very natural, in that just as ``$X\in N(s) \text{ or }S\backslash X\in N(s)$'' corresponds to the old semantics of $\Delta$, $X\in cN(s)$ corresponds to the new semantics of $\Delta$. It is easy to see that every neighborhood model has a sole $c$-variation, that every such $c$-variation is a c-model, and moreover, for any neighborhood model $\M$, if $\M$ is already a $c$-model, then $c(\M)=\M$.

\begin{proposition}\label{prop.neigh-c}
Let $\M$ be a neighborhood model. Then for all $\phi\in\mathcal{L}_\Delta$, for all $s\in\M$, we have $\M,s\Vdash\phi\iff c(\M),s\Vvdash\phi$, i.e., $\phi^{\M_\Vdash}=\phi^{c(\M)}$.
\end{proposition}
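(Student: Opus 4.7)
The plan is to proceed by induction on the complexity of $\phi\in\mathcal{L}_\Delta$, exploiting the fact that $c(\M)$ is defined to have the same domain $S$ and the same valuation $V$ as $\M$, differing only in the neighbourhood function.

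For the base case $\phi=p$, both $\M,s\Vdash p$ and $c(\M),s\Vvdash p$ reduce to $s\in V(p)$, so the equivalence is immediate. The boolean cases $\neg\psi$ and $\psi\land\chi$ are handled by the induction hypothesis in the standard way, since both satisfaction relations treat negation and conjunction identically.

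The only interesting case is $\phi=\Delta\psi$. Here I would unfold the old semantics: $\M,s\Vdash\Delta\psi$ iff $\psi^{\M_\Vdash}\in N(s)$ or $(\neg\psi)^{\M_\Vdash}\in N(s)$. Since $(\neg\psi)^{\M_\Vdash}=S\backslash\psi^{\M_\Vdash}$, this is precisely the defining condition for $\psi^{\M_\Vdash}\in cN(s)$. The induction hypothesis gives $\psi^{\M_\Vdash}=\psi^{c(\M)}$, so the condition becomes $\psi^{c(\M)}\in cN(s)$, which by the new semantics is exactly $c(\M),s\Vvdash\Delta\psi$. Chaining these equivalences closes the inductive step.

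I do not anticipate any real obstacle: the definition of $cN$ was tailored to turn the disjunctive clause of the old semantics into a membership test matching the new semantics, so the $\Delta$-step is essentially a rewriting of definitions once the induction hypothesis is in place. The only point requiring a moment of care is recognising that $(\neg\psi)^{\M_\Vdash}$ is the set-theoretic complement of $\psi^{\M_\Vdash}$ in $S$, so that the two disjuncts ``$\psi^{\M_\Vdash}\in N(s)$ or $S\backslash\psi^{\M_\Vdash}\in N(s)$'' line up verbatim with the clause defining $cN(s)$.
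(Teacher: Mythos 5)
Your proof is correct and follows essentially the same route as the paper's: induction on $\phi$, with the $\Delta$-case resolved by observing that the disjunctive clause of the old semantics is exactly the membership condition for $cN(s)$, combined with the induction hypothesis $\psi^{\M_\Vdash}=\psi^{c(\M)}$. The only (immaterial) difference is that you apply the definition of $cN$ before the induction hypothesis, whereas the paper applies them in the opposite order.
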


\begin{proof}
The proof is by induction on $\phi$, where the only nontrivial case is $\Delta\phi$. We have
\[
\begin{array}{lll}
\M,s\Vdash\Delta\phi&\iff&\phi^{\M_\Vdash}\in N(s)\text{ or }S\backslash(\phi^{\M_\Vdash})\in N(s)\\
&\stackrel{\text{IH}}\iff&\phi^{c(\M)}\in N(s)\text{ or }S\backslash(\phi^{c(\M)})\in N(s)\\
&\stackrel{\text{Def.}cN}\iff&\phi^{c(\M)}\in cN(s)\\
%&\iff&\phi^{\M}\in N(s)\\
&\iff&c(\M),s\Vvdash\Delta\phi\\
\end{array}\]
%Therefore, $(c(\M),s)$ is the desired pointed $c$-model.
%The converse is immediate from Prop.~\ref{prop.satis-same}.
\weg{Conversely, let $\mathcal{O}=\lr{S,N,V}$ be a $c$-model and $s\in S$. Define a model $\mathcal{O}'=\lr{S,N',V}$ such that $$N'=N.$$
We now proceed with induction on $\phi$, to show that $\mathcal{O}',s\Vdash^o\phi\iff\mathcal{O},s\Vdash\phi$ for all $\phi\in\CL$. We only need to check the case $\Delta\phi$.
\[
\begin{array}{lll}
\mathcal{O}',s\Vdash^o\Delta\phi&\iff& \phi^{\mathcal{O}'}\in N'(s)\text{ or }S\backslash(\phi^{\mathcal{O}'})\in N'(s)\\
&\stackrel{\text{IH,Def.~}N'}\iff & \phi^{\mathcal{O}}\in N(s)\text{ or }S\backslash(\phi^{\mathcal{O}})\in N(s)\\
&\stackrel{c}\iff&\phi^{\mathcal{O}}\in N(s)\\
&\iff&\mathcal{O},s\Vdash\Delta\phi\\
\end{array}
\]}
\end{proof}

\weg{\begin{proposition}\label{prop.neigh-c}
For every neighborhood pointed model $(\M,s)$, there exists a $c$-model $(\M',s')$ such that for all $\phi\in\CL$, $$\M',s'\Vdash\phi\iff\M,s\Vdash^o\phi,$$ and vice versa.
\end{proposition}

\begin{proof}
First, let $\M=\lr{S,N,V}$ and $s\in S$. Define a model $\M'=\lr{S,N',V}$ such that for all $x\in S$,
$$N'(x)=\{X\subseteq S: X\in N(x) \text{ or }S\backslash X\in N(x)\}.$$
Then one may easily verify that $\M'$ is a $c$-model. We now show that $\M',s\Vdash\phi\iff\M,s\Vdash^o\phi$ for all $\phi\in\CL$.

We proceed with induction on $\phi$. The only case to consider is $\Delta\phi$. We have
\[
\begin{array}{lll}
\M',s\Vdash \Delta\phi&\iff&\phi^{\M'}\in N'(s)\\
&\stackrel{\text{IH}}\iff&\phi^\M\in N'(s)\\
&\iff&\phi^\M\in N(s)\text{ or }S\backslash(\phi^\M)\in N(s)\\
&\iff&\M,s\Vdash^o\Delta\phi\\
\end{array}\]
Therefore, $(\M',s)$ is the desired pointed $c$-model.

The converse is immediate from Prop.~\ref{prop.satis-same}.
\weg{Conversely, let $\mathcal{O}=\lr{S,N,V}$ be a $c$-model and $s\in S$. Define a model $\mathcal{O}'=\lr{S,N',V}$ such that $$N'=N.$$
We now proceed with induction on $\phi$, to show that $\mathcal{O}',s\Vdash^o\phi\iff\mathcal{O},s\Vdash\phi$ for all $\phi\in\CL$. We only need to check the case $\Delta\phi$.
\[
\begin{array}{lll}
\mathcal{O}',s\Vdash^o\Delta\phi&\iff& \phi^{\mathcal{O}'}\in N'(s)\text{ or }S\backslash(\phi^{\mathcal{O}'})\in N'(s)\\
&\stackrel{\text{IH,Def.~}N'}\iff & \phi^{\mathcal{O}}\in N(s)\text{ or }S\backslash(\phi^{\mathcal{O}})\in N(s)\\
&\stackrel{c}\iff&\phi^{\mathcal{O}}\in N(s)\\
&\iff&\mathcal{O},s\Vdash\Delta\phi\\
\end{array}
\]}
\end{proof}}

Let $\Gamma\Vvdash_c\phi$ denote that $\Gamma$ entails $\phi$ over the class of all $c$-models, i.e., for each $c$-model $\M$ and each $s\in \M$, if $\M,s\Vvdash\psi$ for every $\psi\in\Gamma$, then $\M,s\Vvdash\phi$. With Props.~\ref{prop.satis-same} and~\ref{prop.neigh-c} in hand, we obtain immediately that
\begin{corollary}\label{coro.c-valid-reduce}
For all $\Gamma\cup\{\phi\}\subseteq\mathcal{L}_\Delta$, $\Gamma\Vvdash_c\phi\iff\Gamma\Vdash\phi.$ Therefore, for all $\phi\in\mathcal{L}_\Delta$, $\Vvdash_c\phi\iff\Vdash\phi.$
\end{corollary}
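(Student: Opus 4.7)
The plan is to derive the corollary directly from Propositions~\ref{prop.satis-same} and~\ref{prop.neigh-c}, using one of them in each direction of the biconditional, and then obtain the second sentence by specializing to $\Gamma=\emptyset$.

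For the direction $\Gamma\Vvdash_c\phi\Longrightarrow \Gamma\Vdash\phi$, I would let $\M$ be an arbitrary neighborhood model and $s\in\M$ with $\M,s\Vdash\psi$ for every $\psi\in\Gamma$. The key move is to pass to the $c$-variation: by Prop.~\ref{prop.neigh-c}, $c(\M),s\Vvdash\psi$ for all $\psi\in\Gamma$. Since $c(\M)$ is a $c$-model (as noted right after the definition of $c$-variation), the hypothesis $\Gamma\Vvdash_c\phi$ applies and yields $c(\M),s\Vvdash\phi$. Applying Prop.~\ref{prop.neigh-c} once more in the other direction gives $\M,s\Vdash\phi$, as required.

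For the direction $\Gamma\Vdash\phi\Longrightarrow \Gamma\Vvdash_c\phi$, I would let $\M$ be an arbitrary $c$-model and $s\in\M$ with $\M,s\Vvdash\psi$ for every $\psi\in\Gamma$. Since $\M$ is a $c$-model, Prop.~\ref{prop.satis-same} converts this into $\M,s\Vdash\psi$ for every $\psi\in\Gamma$. Then $\Gamma\Vdash\phi$, which ranges over all neighborhood models (in particular $c$-models), gives $\M,s\Vdash\phi$, and a final application of Prop.~\ref{prop.satis-same} converts this back to $\M,s\Vvdash\phi$.

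The second sentence, $\Vvdash_c\phi\iff\Vdash\phi$, is then the special case $\Gamma=\emptyset$ of the biconditional just proved. There is no real obstacle here: the two preceding propositions already did the semantic work (one relating the two semantics on $c$-models, the other reducing an arbitrary model to its $c$-variation), and the corollary just glues them together, with the only minor point being to keep straight which of $\Vvdash$ and $\Vdash$ applies in each step.
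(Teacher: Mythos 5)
Your proposal is correct and matches the paper's intent exactly: the paper states the corollary as an immediate consequence of Propositions~\ref{prop.satis-same} and~\ref{prop.neigh-c}, and you have simply filled in the routine details, using the $c$-variation (Prop.~\ref{prop.neigh-c}) for one direction and the coincidence of the two semantics on $c$-models (Prop.~\ref{prop.satis-same}) for the other. Nothing is missing.
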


\weg{Therefore, if we would like to show the completeness proof for the old neighborhood semantics, we need only show the completeness proof for the new one.

\begin{corollary}
Let $P$ be any neighborhood model property. For all $\phi\in\CL$, we have $$\Vdash_{Pc}\phi\iff\Vdash^o_{Pc}\phi.$$
\end{corollary}

\begin{proof}
We only need to prove $$\Vdash^n_{c}\phi\iff\Vdash_{c}\phi.$$
\end{proof}

Unfortunately, we do not have $\Vdash^n_{Pc}\phi\iff\Vdash_{P}\phi.$ For example, $\Vdash^n_{sc}\Delta(\phi\land\psi)\to\Delta\phi\land\Delta\psi$, but we do not have $\Vdash_s\Delta(\phi\land\psi)\to\Delta\phi\land\Delta\psi$.}

In this way, we strengthened the expressive power of {\bf CL}, since it is now equally expressive as {\bf ML}, and kept the logic (valid formulas) the same as the old neighborhood semantics. The noncontingency operator $\Delta$ can now be seen as a package of $\Box$ and $\Delta$ in the old neighborhood semantics; under the new neighborhood semantics, on the one hand, it is interpreted just as $\Box$; on the other hand, it retains the validity $\Delta p\lra \Delta\neg p$.

\section{$c$-Bisimulation}\label{sec.c-bis}

Recall that the essence of the notion of nbh-$\Delta$-bisimulation proposed in~\cite{Bakhtiarietal:2017} is not so clear. In this section, we introduce a notion of $c$-bisimulation, and show that this notion is equivalent to nbh-$\Delta$-bisimulation. The $c$-bisimulation is inspired by both Prop.~\ref{prop.def-c} and the definition of {\em precocongruences} in~\cite[Prop.~3.16]{hansenetal:2009}. Intuitively, the notion is obtained by just adding the property $(c)$ into the notion of precocongruences.

%In~\cite[Prop.~3.11]{hansenetal:2007}, a notion of {\em neighborhood bisimulation}, which was called ``precocongruence'' later in its extended journal version~\cite[Prop.~3.16]{hansenetal:2009}, was introduced. To

\weg{First we briefly review the nbh-$\Delta$-bisimulation introduced in~\cite[Def.9]{Bakhtiarietal:2017}, which was inspired by the definition of {\em precocongruences} in~\cite{hansenetal:2009} and the neighbourhood semantics of $\Delta$.

\begin{definition}[nbh-$\Delta$-bisimulation]
Let $\M=\lr{S,N,V}$ and $\M'=\lr{S',N',V'}$ be neighborhood models. A nonempty relation $Z\subseteq S\times S'$ is a {\em nbh-$\Delta$-bisimulation}, if for all $(s,s')\in Z$,

\textbf{(Atoms)} $s\in V(p)$ iff $s'\in V'(p)$ for all $p\in \BP$;

\textbf{(Coherence)} if the pair $(U,U')$ is $Z$-coherent, then
$$(U\in N(s)\text{ or }S\backslash U\in N(s))\text{ iff }(U'\in N'(s') \text{ or }S'\backslash U'\in N'(s')).$$
\end{definition}}

\begin{definition}[c-bisimulation]
Let $\M=\lr{S,N,V}$ and $\M'=\lr{S',N',V'}$ be $c$-models. A nonempty relation $Z\subseteq S\times S'$ is a {\em c-bisimulation} between $\M$ and $\M'$, if for all $(s,s')\in Z$,
\begin{enumerate}
\item[(i)] $s\in V(p)$ iff $s'\in V'(p)$ for all $p\in \BP$;
\item[(ii)] if the pair $(U,U')$ is $Z$-coherent, then
$U\in N(s)\text{ iff }U'\in N'(s').$
\end{enumerate}
We say $(\M,s)$ and $(\M',s')$ are {\em c-bisimilar}, written $(\M,s)\bis_c(\M',s')$, if there is a c-bisimulation $Z$ between $\M$ and $\M'$ such that $(s,s')\in Z$.
\end{definition}

Note that both $c$-bisimulation and $c$-bisimilarity are defined between $c$-models, rather than between any neighborhood models. $\mathcal{L}_\Delta$ formulas are invariant under $c$-bisimilarity.
\begin{proposition}\label{prop.invariance-c-bis}
Let $\M$ and $\M'$ be $c$-models, $s\in \M$ and $s'\in \M'$. If $(\M,s)\bis_c(\M',s')$, then for all $\phi\in\mathcal{L}_\Delta$, $\M,s\Vvdash \phi\iff\M',s'\Vvdash\phi.$
\end{proposition}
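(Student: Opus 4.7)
The plan is to prove the statement by induction on the structure of $\phi \in \mathcal{L}_\Delta$, after first strengthening the claim so that it reads: for every $c$-bisimulation $Z$ between $c$-models $\M$ and $\M'$ and every $(x, x') \in Z$, we have $\M, x \Vvdash \phi \iff \M', x' \Vvdash \phi$. Applying this strengthening to the witnessing $Z$ and the pair $(s, s') \in Z$ then yields the proposition. As explained below, the strengthening is not cosmetic; it is what makes the modal case go through.

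For the atomic case $\phi = p$, clause (i) of the $c$-bisimulation definition gives the conclusion directly. The Boolean cases $\phi = \neg\psi$ and $\phi = \psi_1 \land \psi_2$ unfold routinely from the inductive hypothesis and the semantic clauses for $\neg$ and $\land$, using the fact that the relevant states remain $Z$-related.

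The interesting case is $\phi = \Delta\psi$. The key step is to observe that the truth-set pair $(\psi^\M, \psi^{\M'})$ is $Z$-coherent. This is exactly where the strengthened induction pays off: for any $(x, x') \in Z$, the inductive hypothesis applied to $\psi$ gives $x \in \psi^\M \iff \M, x \Vvdash \psi \iff \M', x' \Vvdash \psi \iff x' \in \psi^{\M'}$. With $Z$-coherence of $(\psi^\M, \psi^{\M'})$ established, clause (ii) of the $c$-bisimulation definition, instantiated at $(s, s')$, delivers $\psi^\M \in N(s) \iff \psi^{\M'} \in N'(s')$, which by the new semantic clause for $\Delta$ is precisely $\M, s \Vvdash \Delta\psi \iff \M', s' \Vvdash \Delta\psi$.

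The only point that needs care — not really an obstacle, but worth flagging — is the strengthening of the statement to all $Z$-related pairs. Working only with the fixed pair $(s, s')$ would leave no way to establish $Z$-coherence of the truth-sets, and thus no way to invoke clause (ii). Note that the $c$-model hypothesis on $\M$ and $\M'$ plays no active role in the induction itself; it is needed only to ensure that $c$-bisimulations are well-defined objects and that the new semantics $\Vvdash$ validates the same formulas as $\Vdash$ (via Corollary~\ref{coro.c-valid-reduce}).
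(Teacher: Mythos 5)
Your proof is correct and follows essentially the same route as the paper: induction on $\phi$ with the claim strengthened to all related pairs, so that the truth-set pair $(\psi^{\M},\psi^{\M'})$ can be shown coherent via the inductive hypothesis and clause (ii) can then be applied at $(s,s')$. The only cosmetic difference is that you establish $Z$-coherence for the fixed witnessing bisimulation while the paper argues via $\bis_c$-coherence (which implies $Z$-coherence since $Z\subseteq\bis_c$); both are fine.
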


\begin{proof}
Let $\M=\lr{S,N,V}$ and $\M'=\lr{S',N',V'}$ be both $c$-models. By induction on $\phi\in\mathcal{L}_\Delta$. The nontrivial case is $\Delta\phi$.
\[
\begin{array}{ll}
&\M,s\Vvdash\Delta\phi\\
\iff&\phi^\M\in N(s)\\
\stackrel{(\ast)}\iff&\phi^{\M'}\in N'(s')\\
\iff&\M',s'\Vvdash\Delta\phi.\\
\end{array}
\]
$(\ast)$ follows from the fact that $(\phi^\M,\phi^{\M'})$ is $\bis_c$-coherent plus the condition $(ii)$ of $c$-bisimulation. To see why $(\phi^\M,\phi^{\M'})$ is $\bis_c$-coherent, the proof goes as follows: if for any $(x,x')\in\bis_c$, i.e., $(\M,x)\bis_c(\M',x')$, then by induction hypothesis, $\M,x\Vvdash\phi$ iff $\M',x'\Vvdash\phi$, i.e., $x\in \phi^\M$ iff $x'\in\phi^{\M'}$.
\end{proof}

%How about the converse? (give an counterexample for the converse)

Now we are ready to show the Hennessy-Milner Theorem for $c$-bisimulation. Since $c$-bisimulation is defined between $c$-models, we need also to add the property $c$ into the notion of $\Delta$-saturated models in Def.~\ref{def.delta-satu-model}.% Here we define a notion of $\Delta$-saturated $c$-model, which is essentially a notion from~\cite[Def.~11]{Bakhtiarietal:2017} plus the above condition $(c)$.
\begin{definition}[$\Delta$-saturated $c$-model]
Let $\M=\lr{S,N,V}$ be a $c$-model. A set $X\subseteq S$ is $\Delta$-compact, if every set of $\mathcal{L}_\Delta$-formulas that is finitely satisfiable in $X$ is itself also satisfiable in $X$. $\M$ is said to be {\em $\Delta$-saturated}, if for all $s\in S$ and all $\equiv_{\mathcal{L}_\Delta}$-closed neighborhood $X\in N(s)$, \weg{both }$X$\weg{ and $S\backslash X$ are} is $\Delta$-compact.\footnote{Note that we do not distinguish $\equiv_{\mathcal{L}_\Delta}$ here from that in Def.~\ref{def.delta-satu-model} despite different neighborhood semantics. This is because as we show in Prop.~\ref{prop.satis-same}, on $c$-models the two neighborhood semantics are the same in terms of $\mathcal{L}_\Delta$ satisfiability. Thus it does not matter which semantics is involved in the current context.\weg{We use $\equiv_{\mathcal{L}_\Delta}$ to mean $\mathcal{L}_\Delta$-equivalent.}}
\end{definition}

In the above definition of $\Delta$-saturated $c$-model, we write ``$X$ is $\Delta$-compact'', rather than ``both $X$ and $S\backslash X$ are $\Delta$-compact'', since under the condition that $X\in N(s)$ and the property $(c)$, these two statements are equivalent. Thus each $\Delta$-saturated $c$-model must be a $\Delta$-saturated model.

We will demonstrate that on $\Delta$-saturated $c$-models, $\mathcal{L}_\Delta$-equivalence implies $c$-bisimilarity, for which we prove that the notion of c-bisimulation is equivalent to that of nbh-$\Delta$-bisimulation, in the sense that every nbh-$\Delta$-bisimulation (between neighborhood models) is a c-bisimulation (between $c$-models), and vice versa. By doing so, we can see clearly the essence of nbh-$\Delta$-bisimulation, i.e. precocongruences with property $(c)$. %First, we introduce a notion of $c$-variation.

\begin{proposition}\label{prop.nbh-c}
Let $\M=\lr{S,N,V}$ and $\M'=\lr{S',N',V'}$ be neighborhood models.
If $Z$ is a nbh-$\Delta$-bisimulation between $\M$ and $\M'$, then $Z$ is a c-bisimulation between $c(\M)$ and $c(\M')$.\weg{, where $c(\M)$ and $c(\M')$ are c-variations of $\M$ and $\M'$, respectively. Therefore, for all $s\in\M$ and all $s'\in\M'$, if $(\M,s)\sim_\Delta(\M',s')$, then $(c(\M),s)\bis_c(c(\M'),s')$.}
\end{proposition}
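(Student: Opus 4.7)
The plan is to show this is essentially an unfolding of definitions, with the key observation being that the clause $(U\in N(s)\text{ or }S\setminus U\in N(s))$ appearing in the (Coherence) condition of nbh-$\Delta$-bisimulation is \emph{literally} the defining clause for $U\in cN(s)$. So the coherence condition for nbh-$\Delta$-bisimulation with respect to $N,N'$ already \emph{is} the c-bisimulation clause (ii) with respect to $cN,cN'$.

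First I would check the trivial preconditions. The c-variations $c(\M)$ and $c(\M')$ are $c$-models: if $X\in cN(s)$ then by definition $X\in N(s)$ or $S\setminus X\in N(s)$, so $S\setminus X\in N(s)$ or $S\setminus(S\setminus X)=X\in N(s)$, hence $S\setminus X\in cN(s)$. Thus the statement ``$Z$ is a c-bisimulation between $c(\M)$ and $c(\M')$'' is well-typed. Next, observe that $c(\M)$ and $c(\M')$ carry the same valuations $V,V'$ as $\M,\M'$, so the (Atoms) clause for $Z$ as an nbh-$\Delta$-bisimulation gives condition (i) of c-bisimulation at once.

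For condition (ii), fix $(s,s')\in Z$ and suppose $(U,U')$ is $Z$-coherent. The (Coherence) condition for nbh-$\Delta$-bisimulation gives
\[
(U\in N(s)\text{ or }S\setminus U\in N(s))\iff (U'\in N'(s')\text{ or }S'\setminus U'\in N'(s')),
\]
which by the very definition of $cN$ and $cN'$ is the same as $U\in cN(s)\iff U'\in cN'(s')$. That is exactly clause (ii) of c-bisimulation for $Z$ viewed as a relation between $c(\M)$ and $c(\M')$. Since $Z$ is nonempty, we are done.

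There is no real obstacle here; the whole content of the proposition is the observation that nbh-$\Delta$-bisimulation was already phrased in terms of the ``or-complement'' combination, which is precisely what the $c$-variation packages into a single membership relation. The only thing I would be careful about is making the change of model explicit — the \emph{same} relation $Z$ is being reinterpreted across different models — and noting that the domain of $Z$ is unchanged because $c(\M)$ and $\M$ share their carrier set.
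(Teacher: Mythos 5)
Your proof is correct and follows essentially the same route as the paper's: (Atoms) yields condition (i) since the valuation is unchanged, and (Coherence) rewritten via the definition of $cN$ and $cN'$ yields condition (ii) directly. The explicit check that $c(\M)$ and $c(\M')$ are $c$-models is a detail the paper leaves to the reader, and you supply it correctly.
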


\begin{proof}
Suppose that $Z$ is a nbh-$\Delta$-bisimulation between $\M$ and $\M'$, to show $Z$ is a c-bisimulation between $c(\M)$ and $c(\M')$.

First, one can easily verify that $c(\M)$ and $c(\M')$ are both $c$-models.

Second, assume that $(s,s')\in Z$. Since $\M$ and $c(\M)$ have the same domain and valuation, item (i) can be obtained from the supposition and \textbf{(Atoms)}. For item (ii), let $(U,U')$ be $Z$-coherent. We need to show that $U\in cN(s)$ iff $U'\in cN'(s')$. For this, we have the following line of argumentation: $U\in cN(s)$ iff (by definition of $cN$) ($U\in N(s)$ or $S\backslash U\in N(s)$) iff (by \textbf{(Coherence)}) iff ($U'\in N'(s')$ or $S'\backslash U'\in N'(s')$) iff (by definition of $cN'$) $U'\in cN'(s')$.
\end{proof}

%Obviously, for any model $\M$, if $\M$ is already a $c$-model, then $c(\M)=\M$.

\begin{proposition}\label{prop.c-nbh}
Let $\M=\lr{S,N,V}$ and $\M'=\lr{S',N',V'}$ be $c$-models. If $Z$ is a c-bisimulation between $\M$ and $\M'$, then $Z$ is a nbh-$\Delta$-bisimulation between $\M$ and $\M'$. %Therefore, for all $s\in \M$ and all $s'\in\M'$, if $(\M,s)\bis_c(\M',s')$, then $(\M,s)\sim_\Delta(\M',s')$.
\end{proposition}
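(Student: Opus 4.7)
The plan is short: (Atoms) is literally the same condition as item (i) of $c$-bisimulation, so it transfers for free. The only real content is showing (Coherence), and this reduces to one observation, applied twice.

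First I would fix $(s,s') \in Z$ and a $Z$-coherent pair $(U,U')$ with $U \subseteq S$, $U' \subseteq S'$, and aim to derive the biconditional
\[
(U \in N(s) \text{ or } S\setminus U \in N(s)) \iff (U' \in N'(s') \text{ or } S'\setminus U' \in N'(s')).
\]
The key lemma (really just an unpacking of the definition) is that if $(U,U')$ is $Z$-coherent, then so is $(S\setminus U,\, S'\setminus U')$: for any $(x,y) \in Z$, we have $x \in S\setminus U$ iff $x \notin U$ iff $y \notin U'$ iff $y \in S'\setminus U'$.

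Having this, I apply item (ii) of $c$-bisimulation twice: once to $(U,U')$ to get $U \in N(s) \iff U' \in N'(s')$, and once to $(S\setminus U,\, S'\setminus U')$ to get $S\setminus U \in N(s) \iff S'\setminus U' \in N'(s')$. Taking the disjunction of these two equivalences on both sides yields exactly \textbf{(Coherence)}.

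I do not anticipate any real obstacle here; the proposition is essentially the ``easy direction'' dual to Proposition~\ref{prop.nbh-c}, and the only thing worth remarking on is that $Z$-coherence is preserved under complementation, which lets the weaker c-bisimulation clause (a plain biconditional on $N$-membership) imply the seemingly stronger nbh-$\Delta$-bisimulation clause (a biconditional on the disjunction $U \in N(s) \text{ or } S\setminus U \in N(s)$). Note that the hypothesis that $\M,\M'$ are $c$-models is not actually needed for this direction, but it is present because c-bisimulations are by definition only defined between $c$-models.
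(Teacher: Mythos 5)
Your proof is correct and matches the paper's argument exactly: (Atoms) is immediate from condition (i), and (Coherence) follows by applying condition (ii) to both $(U,U')$ and the complemented pair $(S\setminus U,\,S'\setminus U')$, which remains $Z$-coherent. Your closing remark that the $c$-model hypothesis is not used in this direction is also accurate; it enters only because the notion of $c$-bisimulation is defined between $c$-models.
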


\begin{proof}
Suppose that $Z$ is a c-bisimulation between $c$-models $\M$ and $\M'$, to show $Z$ is a nbh-$\Delta$-bisimulation between $\M$ and $\M'$. Assume that $(s,s')\in Z$, we only need to show \textbf{(Atoms)} and \textbf{(Coherence)} holds. \textbf{(Atoms)} is clear from (i).

For \textbf{(Coherence)}, let the pair $(U,U')$ is $Z$-coherent. Then by (ii), $U\in N(s)\text{ iff }U'\in N'(s')$. We also have that $(S\backslash U,S'\backslash U')$ is $Z$-coherent. Using (ii) again, we infer that $S\backslash U\in N(s)$ iff $S'\backslash U'\in N'(s')$. Therefore, ($U\in N(s)$ or $S\backslash U\in N(s)$) iff ($U'\in N'(s')$ or $S'\backslash U'\in N'(s')$), as desired.
\end{proof}

Since every $c$-variation of a $c$-model is just the model itself, by Props.~\ref{prop.nbh-c} and~\ref{prop.c-nbh}, we obtain immediately that
\begin{corollary}\label{coro.c-bis-equi-nbh-bis}
Let $\M=(S,N,V)$ and $\M'=\lr{S',N',V'}$ be both $c$-models. Then $Z$ is a $c$-bisimulation between $\M$ and $\M'$ iff $Z$ is an nbh-$\Delta$-bisimulation between $\M$ and $\M'$.
\end{corollary}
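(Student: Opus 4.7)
The plan is to derive the corollary directly from the two preceding propositions, using the observation (already noted right after Def.~\ref{def.delta-satu-model}, and explicit in the discussion of $c$-variation) that whenever a neighborhood model $\M$ already satisfies $(c)$, its $c$-variation is itself, i.e.\ $c(\M)=\M$.

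For the right-to-left direction, I would appeal to Prop.~\ref{prop.nbh-c}. Assume $Z$ is an nbh-$\Delta$-bisimulation between the $c$-models $\M$ and $\M'$. Prop.~\ref{prop.nbh-c} gives us that $Z$ is a $c$-bisimulation between $c(\M)$ and $c(\M')$. But since $\M$ and $\M'$ are already $c$-models, their neighborhood functions satisfy $(c)$, so unfolding the definition of $cN$ (respectively $cN'$) yields $cN(s)=N(s)$ for every $s\in S$ (and analogously for $\M'$), hence $c(\M)=\M$ and $c(\M')=\M'$. Therefore $Z$ is a $c$-bisimulation between $\M$ and $\M'$ itself.

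For the left-to-right direction, the argument is even more immediate: Prop.~\ref{prop.c-nbh} is already stated for $c$-models, so if $Z$ is a $c$-bisimulation between the $c$-models $\M$ and $\M'$, then $Z$ is an nbh-$\Delta$-bisimulation between $\M$ and $\M'$. No further manipulation is needed.

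Since both halves are essentially one-line invocations of the two preceding propositions (plus the triviality that $c$-variation is idempotent on $c$-models), there is no real obstacle here; the only subtle point worth flagging is to verify carefully that $(c)$ implies $cN=N$, so that the application of Prop.~\ref{prop.nbh-c} indeed delivers a $c$-bisimulation between $\M$ and $\M'$ rather than merely between their $c$-variations.
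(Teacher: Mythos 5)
Your proposal is correct and is essentially identical to the paper's own argument: the paper also derives the corollary immediately from Props.~\ref{prop.nbh-c} and~\ref{prop.c-nbh} together with the observation that the $c$-variation of a $c$-model is the model itself. Your explicit check that $(c)$ forces $cN=N$ is exactly the small verification the paper leaves implicit.
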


\begin{theorem}[Hennessy-Milner Theorem for $c$-bisimulation]
Let $\M$ and $\M'$ be $\Delta$-saturated $c$-models, and $s\in\M$, $s'\in\M'$. If for all $\phi\in\mathcal{L}_\Delta$, $\M,s\Vvdash \phi\iff\M',s'\Vvdash\phi$, then $(\M,s)\bis_c(\M',s')$.
\end{theorem}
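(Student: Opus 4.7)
The plan is to leverage the existing Hennessy--Milner Theorem for nbh-$\Delta$-bisimulation (Theorem~\ref{thm.hm-nbh-delta-bis}) together with the equivalence of the two bisimulation notions on $c$-models (Corollary~\ref{coro.c-bis-equi-nbh-bis}), so that the result essentially reduces to bookkeeping rather than fresh saturation arguments.

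First, I would observe that a $\Delta$-saturated $c$-model is, in particular, a $\Delta$-saturated model in the sense of Def.~\ref{def.delta-satu-model}: as noted in the remark following the definition, the assumption ``$X$ is $\Delta$-compact'' for $\equiv_{\mathcal{L}_\Delta}$-closed $X \in N(s)$ automatically yields ``$S \setminus X$ is $\Delta$-compact'' in the presence of property $(c)$, since $S \setminus X \in N(s)$ as well, and that set is itself $\equiv_{\mathcal{L}_\Delta}$-closed. Hence the saturation hypothesis in Theorem~\ref{thm.hm-nbh-delta-bis} is met by $\M$ and $\M'$.

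Second, I would transfer the $\mathcal{L}_\Delta$-equivalence hypothesis from the new semantics $\Vvdash$ to the old semantics $\Vdash$. Since $\M$ and $\M'$ are $c$-models, Proposition~\ref{prop.satis-same} gives $\phi^{\M} = \phi^{\M_\Vdash}$ and similarly for $\M'$, so the assumption that $(\M,s)$ and $(\M',s')$ satisfy exactly the same $\mathcal{L}_\Delta$-formulas under $\Vvdash$ is the same as saying $(\M,s) \equiv_{\mathcal{L}_\Delta} (\M',s')$ in the sense used by Theorem~\ref{thm.hm-nbh-delta-bis}. Applying that theorem then produces a nbh-$\Delta$-bisimulation $Z$ between $\M$ and $\M'$ containing $(s,s')$.

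Finally, I would invoke Corollary~\ref{coro.c-bis-equi-nbh-bis}: because $\M$ and $\M'$ are already $c$-models, the nbh-$\Delta$-bisimulation $Z$ is in fact a $c$-bisimulation between $\M$ and $\M'$ (this is the easy direction, Prop.~\ref{prop.c-nbh}'s converse). Therefore $(\M,s) \bis_c (\M',s')$, as required. The only thing to watch out for is the subtle dance between semantics: nothing in this proof is hard computationally, but one has to make sure the two notions of $\mathcal{L}_\Delta$-equivalence and the two notions of $\Delta$-saturation really line up on $c$-models, which is precisely what Prop.~\ref{prop.satis-same} and the footnote in the definition of $\Delta$-saturated $c$-model are there to guarantee.
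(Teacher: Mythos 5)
Your proposal is correct and follows exactly the paper's own argument: transfer the $\mathcal{L}_\Delta$-equivalence from $\Vvdash$ to $\Vdash$ via Prop.~\ref{prop.satis-same}, note that every $\Delta$-saturated $c$-model is $\Delta$-saturated, apply the Hennessy--Milner Theorem for nbh-$\Delta$-bisimulation, and convert the resulting nbh-$\Delta$-bisimulation into a $c$-bisimulation via Coro.~\ref{coro.c-bis-equi-nbh-bis}. The extra care you take in checking that the two saturation notions line up is exactly the justification the paper gives in the remark following its definition of $\Delta$-saturated $c$-models.
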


\begin{proof}
Suppose $\M$ and $\M'$ are $\Delta$-saturated $c$-models such that for all $\phi\in\mathcal{L}_\Delta$, $\M,s\Vvdash \phi\iff\M',s'\Vvdash\phi$. By Prop.~\ref{prop.satis-same}, we have that for all $\phi\in\mathcal{L}_\Delta$, $\M,s\Vdash \phi\iff\M',s'\Vdash\phi$. Since each $\Delta$-saturated $c$-model is a $\Delta$-saturated model, by Hennessy-Milner Theorem of nbh-$\Delta$-bisimulation (Thm.~\ref{thm.hm-nbh-delta-bis}), we obtain $(\M,s)\sim_\Delta(\M',s')$. Then by Coro.~\ref{coro.c-bis-equi-nbh-bis}, we conclude that $(\M,s)\bis_c(\M',s')$.
\end{proof}

%Since the suitable neighborhood structural notion that matches the expressivity of standard modal logic is {\em behavioural equivalence}~\cite{hansenetal:2009}, a natural question is why we do not base the notion of c-bisimulation on behavioural equivalence. This is not surprising, because as mentioned, the nbh-$\Delta$-bisimulation in~\cite{Bakhtiarietal:2017} is also based on precocongruence rather than on behavioural equivalence.
\weg{We also obtain the van Benthem Characterization Theorem for $c$-bisimulation.
\begin{theorem}\label{thm.van-benthem-cbis}
Every modal formula is equivalent to an $\mathcal{L}_\Delta$-formula on the class of neighborhood models iff it is invariant under $c$-bisimulation.
\end{theorem}

\begin{proof}
The part of `only if' is immediate from Prop.~\ref{prop.invariance-c-bis}.

For the converse, suppose a modal formula $\phi$ is invariant under $c$-bisimulation, we show that $\phi$ is also invariant under nbh-$\Delta$-bisimulation. Thus we assume for models $\M,\M'$ and $s\in\M,s'\in\M'$, $(\M,s)\sim_\Delta(\M',s')$. By Prop.~\ref{prop.nbh-c}, $(c(\M),s)\bis_c(c(\M'),s')$. From supposition it follows that $c(\M),s\Vvdash\phi$ iff $c(\M'),s'\Vvdash\phi$. Then using Prop.~\ref{prop.neigh-c}, we get $\M,s\Vdash\phi$ iff $\M',s\Vdash\phi$. By Thm.~\ref{thm.nbhbis-vanbenthem}, $\phi$ is equivalent to an $\mathcal{L}_\Delta$-formula.
\end{proof}

\begin{theorem}\label{thm.van-benthem-cbis2}
Every first-order $\mathcal{L}_1$-formula is equivalent to an $\mathcal{L}_\Delta$-formula on the class of neighborhood models iff it is invariant under $c$-bisimulation.\footnote{$\mathcal{L}_1$ is a two-sorted first-order correspondence language, where the two sorts {\bf s} and {\bf n} correspond to, respectively, states and neighborhoods. For further details, we refer to \cite[Sec.5]{hansenetal:2009}.}
\end{theorem}

\begin{proof}
The part of `only if' is immediate from Prop.~\ref{prop.invariance-c-bis}.

As for the other direction, suppose a first-order $\mathcal{L}_1$-formula $\phi$ is invariant under $c$-bisimulation, then similar to the proof of Thm.~\ref{thm.van-benthem-cbis}, we can show that $\phi$ is invariant under nbh-$\Delta$-bisimulation. Then by Thm.~\ref{thm.nbhbis-vanbenthem}, $\phi$ is equivalent to an $\mathcal{L}_\Delta$-formula, as desired.
\end{proof}}

\section{Monotonic $c$-bisimulation}

This section proposes a notion of bisimulation for {\bf CL} over monotonic, $c$-models. This notion can be obtained via two ways: one is to add the property of monotonicity $(s)$ into $c$-bisimulation, the other is to add the property $(c)$ into monotonic bisimulation (for {\bf ML}).\footnote{For the notion of monotonic bisimulation, refer to~\cite[Def.~4.10]{hansen2003monotonic}.} For the sake of reference, we call the notion obtained by the first way `monotonic $c$-bisimulation', and that obtained by the second way `$c$-monotonic bisimulation'. We will show that the two notions are indeed the same.

%In this part we address the notion of monotonic bisimulation, which is obtained by adding the property $(c)$ to the notion of monotonic bisimulation.

\begin{definition}[Monotonic $c$-bisimulation]
Let $\M=\lr{S,N,V}$ and $\M'=\lr{S',N',V'}$ be both monotonic, $c$-models. A nonempty binary relation $Z$ is a {\em monotonic $c$-bisimulation} between $\M$ and $\M'$, if $sZs'$ implies the following:

(i) $s$ and $s'$ satisfy the same propositional variables;

(ii) If $(U,U')$ is $Z$-coherent, then $U\in N(s)\text{ iff }U'\in N'(s').$

%\item If $(U,U')$ is $Z$-coherent, then $$U\in N(s)\text{ iff }U'\in N'(s').$$
$(\M,s)$ and $(\M',s')$ is said to be {\em monotonic $c$-bisimilar}, written $(\M,s)\bis_{sc}(\M',s')$, if there is a monotonic $c$-bisimulation between $\M$ and $\M'$ such that $sZs'$.
\end{definition}

%It is straightforward to check that every monotonic $c$-bisimulation is also a $c$-bisimulation.

\begin{definition}[$c$-monotonic bisimulation]
Let $\M=\lr{S,N,V}$ and $\M'=\lr{S',N',V'}$ be both monotonic, $c$-models. A nonempty binary relation $Z$ is a {\em $c$-monotonic bisimulation} between $\M$ and $\M'$, if $sZs'$ implies the following:

(Prop) $s$ and $s'$ satisfy the same propositional variables;

(c-m-Zig) if $X\in N(s)$, then there exists $X'\in N'(s')$ such that for all $x'\in X'$, there is an $x\in X$ such that $xZx'$;

(c-m-Zag) if $X'\in N'(s')$, then there exists $X\in N(s)$ such that for all $x\in X$, there is an $x'\in X'$ such that $xZx'$.

$(\M,s)$ and $(\M',s')$ is said to be {\em $c$-monotonic bisimilar}, written $(\M,s)\bis_{cs}(\M',s')$, if there is a $c$-monotonic bisimulation between $\M$ and $\M'$ such that $sZs'$.
\end{definition}

Note that both monotonic $c$-bisimulation and $c$-monotonic bisimulation are defined between monotonic, $c$-models.

%Every $c$-monotonic bisimulation is a monotonic $c$-bisimulation, thus is also a $c$-bisimulation.
\begin{proposition}\label{prop.cs-sc}
Every $c$-monotonic bisimulation is a monotonic $c$-bisimulation.% Therefore, if $(\M,s)\bis_{cs}(\M',s')$, then $(\M,s)\bis_{sc}(\M',s')$.
\end{proposition}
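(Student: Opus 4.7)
The plan is to take an arbitrary $c$-monotonic bisimulation $Z$ between monotonic $c$-models $\M=\lr{S,N,V}$ and $\M'=\lr{S',N',V'}$ and verify that it already satisfies clauses (i) and (ii) in the definition of monotonic $c$-bisimulation. Clause (i) coincides verbatim with (Prop), so the whole task reduces to verifying clause (ii): for any $(s,s')\in Z$ and any $Z$-coherent pair $(U,U')$, we must show that $U\in N(s)$ iff $U'\in N'(s')$.

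For the forward direction, I would assume $U\in N(s)$ and apply (c-m-Zig) with $X:=U$ to obtain some $X'\in N'(s')$ such that every $x'\in X'$ has a $Z$-predecessor $x\in U$. The crucial step is to read off $X'\subseteq U'$ directly from $Z$-coherence: whenever $x\in U$ and $xZx'$, the coherence of $(U,U')$ forces $x'\in U'$, so any $x'\in X'$ lies in $U'$. Since $X'\in N'(s')$ and $X'\subseteq U'\subseteq S'$, monotonicity $(s)$ of $N'$ now yields $U'\in N'(s')$. The converse direction is entirely symmetric, using (c-m-Zag) and monotonicity of $N$ instead.

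The argument is essentially routine once this diagram is drawn; the only subtle point to watch is that the ``cover'' $X'$ delivered by (c-m-Zig) need not equal $U'$ but can be a strict subset, which is exactly where the standing assumption of monotonicity is used to bridge the gap. Note that property $(c)$ itself plays no role in the argument---it is packaged into the ambient class of $c$-models rather than invoked in the proof. I would expect the converse implication (that every monotonic $c$-bisimulation is a $c$-monotonic bisimulation) to be the genuinely harder direction, but that is not what this proposition asks.
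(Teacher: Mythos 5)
Your proof is correct and follows exactly the paper's argument: apply (c-m-Zig) with $X:=U$, use $Z$-coherence of $(U,U')$ to conclude the delivered cover $X'$ is contained in $U'$, and then invoke monotonicity $(s)$ to lift $X'\in N'(s')$ to $U'\in N'(s')$, with the converse handled symmetrically via (c-m-Zag). No gaps; this is the same proof as in the paper.
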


\begin{proof}
Suppose that $Z$ is a $c$-monotonic bisimulation between $\M$ and $\M'$, both of which are monotonic, $c$-models, to show that $Z$ is also a monotonic $c$-bisimulation between $\M$ and $\M'$. For this, assume that $sZs'$, it suffices to show the condition (ii).

Assume that $(U,U')$ is $Z$-coherent. If $U\in N(s)$, by (c-m-Zig), there exists $X'\in N'(s')$ such that for all $x'\in X'$, there is a $x\in U$ such that $xZx'$. By assumption and $x\in U$ and $xZx'$, we have $x'\in U'$, thus $X'\subseteq U'$. Then by $(s)$ and $X'\in N'(s')$, we conclude that $U'\in N'(s')$. The converse is similar, but by using (c-m-Zag) instead.
\end{proof}

%Since every monotonic $c$-bisimulation is also a $c$-bisimulation, by Prop.~\ref{prop.cs-sc}, it follows that every $c$-monotonic bisimulation is also a $c$-bisimulation.

\begin{proposition}
Every monotonic $c$-bisimulation is a $c$-monotonic bisimulation.% Therefore, if $(\M,s)\bis_{sc}(\M',s')$, then $(\M,s)\bis_{cs}(\M',s')$.
\end{proposition}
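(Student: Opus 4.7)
The plan is to exploit the fact that monotonicity $(s)$ combined with complement-closure $(c)$ collapses the neighborhood function, namely $N(t) \in \{\emptyset, 2^S\}$ at every state $t$ of a monotonic $c$-model. Indeed, starting from any $X \in N(t)$, $(s)$ applied to $X \subseteq S$ yields $S \in N(t)$, then $(c)$ yields $\emptyset \in N(t)$, and one more application of $(s)$ puts every subset of $S$ into $N(t)$. I would record this as the only real observation in the argument, since the same collapse will apply to $\M'$ as well.

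Given this, fix $(s,s') \in Z$. Clause (Prop) is literally clause (i) of monotonic $c$-bisimulation. For (c-m-Zig), assume $X \in N(s)$. By the collapse, $\emptyset \in N(s)$. The pair $(\emptyset,\emptyset)$ is vacuously $Z$-coherent (as noted in the footnote defining $Z$-coherence), so condition (ii) of the monotonic $c$-bisimulation $Z$ gives $\emptyset \in N'(s')$; applying the collapse on $\M'$ then yields $N'(s') = 2^{S'}$. Taking $X' := Z[X]$ we get $X' \in N'(s')$ together with the trivial containment $X' \subseteq Z[X]$, which is what (c-m-Zig) demands. Clause (c-m-Zag) is entirely symmetric: from $X' \in N'(s')$ the same reasoning (with $\M$ and $\M'$ swapped, and $(\emptyset,\emptyset)$ used in the other direction) gives $N(s) = 2^S$, and then $X := Z^{-1}[X']$ serves as the witness.

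There is no real obstacle here: once the $(s)+(c)$ collapse is noticed, both Zig and Zag reduce to picking the obvious candidate and checking containment. The only point that requires any care is the invocation of condition (ii) on the empty coherent pair (or equivalently on $(S,S')$) to transfer nonemptiness of $N$ from $\M$ to $\M'$; this is what links the two collapses and allows the single argument to cover both clauses. I would foreground the collapse as a standalone observation rather than inlining it twice, since it is the whole content of the proposition.
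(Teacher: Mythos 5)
Your proposal is correct and follows essentially the same route as the paper's proof: the paper likewise passes from $X\in N(s)$ via $(s)$ to $S\in N(s)$, via $(c)$ to $\emptyset\in N(s)$, transfers $\emptyset$ across the $Z$-coherent pair $(\emptyset,\emptyset)$ using condition (ii), and then uses monotonicity of $\M'$ to conclude $Z[X]\in N'(s')$. The only difference is presentational: you isolate the $(s)+(c)$ collapse of $N$ to $\{\emptyset,2^S\}$ as a standalone observation, whereas the paper inlines the same chain of steps.
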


\begin{proof}
Suppose that $Z$ is a monotonic $c$-bisimulation between $\M$ and $\M'$, both of which are monotonic, $c$-models, to show that $Z$ is also a $c$-monotonic bisimulation between $\M$ and $\M'$. For this, given that $sZs'$, we need to show the condition (c-m-Zig) and (c-m-Zag). We show (c-m-Zig) only, since (c-m-Zag) is similar.

Assume that $X\in N(s)$, define $X'=\{x'\mid xZx'\text{~for some~}x\in X\}.$ %$X'=\{x'\mid xZx'\text{~for some~}x\in X\}$.
%It suffices to show that $(X,X')$ is $Z$-coherent, since by this and (ii), we obtain that $X'\in N'(s')$, and for all $x'\in X'$, there is an $x\in X$ such that $xZx'$.
It suffices to show that $X'\in N'(s')$.%, since by this, we obtain that $X'\in N'(s')$, and for all $x'\in X'$, there is an $x\in X$ such that $xZx'$.
~The proof is as follows: by assumption and monotonicity of $\M$, we have $S\in N(s)$, then by $(c)$, $\emptyset \in N(s)$. Since $(\emptyset,\emptyset)$ is $Z$-coherent, by (ii), we infer $\emptyset\in N'(s')$. From this and monotonicity of $\M'$, it follows that $X'\in N'(s')$, as desired.
%Let $(y,y')\in Z$. If $y\in X$, then $y'\in X'$. If $y'\in X'$, then $zZy'$ for some $z\in X$. ... \noteJF{How to get $y\in X$?}
\end{proof}

%As a corollary, the aforementioned two ways guarantee us that the bisimulation notion via one way is indeed the same as the bisimulation notion via the other.
As a corollary, the aforementioned two ways enable us to get the same bisimulation notion.
\begin{corollary}
The notion of monotonic $c$-bisimulation is equal to the notion of $c$-monotonic bisimulation.% Therefore, $(\M,s)\bis_{sc}(\M',s')$, if and only if, $(\M,s)\bis_{cs}(\M',s')$.
\end{corollary}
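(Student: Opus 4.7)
The statement is an immediate consequence of the two propositions immediately preceding it, so my plan is essentially to package them together rather than produce any new argument. Specifically, Proposition~\ref{prop.cs-sc} establishes one inclusion: every $c$-monotonic bisimulation between monotonic $c$-models satisfies condition (ii) of monotonic $c$-bisimulation, by using (c-m-Zig)/(c-m-Zag) together with monotonicity $(s)$ to promote the witness to $U'$ (resp.\ $U$). The unnamed proposition that follows establishes the reverse inclusion by constructing $X' = \{x' \mid xZx' \text{ for some } x \in X\}$ and arguing via $(s)$ and $(c)$ that $X' \in N'(s')$.

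Thus my plan is simply to observe that the two propositions together state that the classes of relations satisfying the two definitions coincide on every pair of monotonic $c$-models. Since the two bisimulation notions share the same clause (i) = (Prop) on propositional variables, and the two propositions show that clause (ii) and the conjunction of (c-m-Zig) with (c-m-Zag) define the same family of relations over monotonic $c$-models, the two notions literally pick out the same set of binary relations, and hence the same bisimilarity relation. There is no hidden obstacle here: the only technical content has already been absorbed into the preceding two propositions, and what remains is a one-line bookkeeping remark citing both of them.
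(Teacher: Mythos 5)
Your proposal is correct and matches the paper exactly: the corollary is stated as an immediate consequence of Proposition~\ref{prop.cs-sc} and the proposition following it, with no further argument given. Packaging the two inclusions together is all that is required.
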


So we can choose either of the two bisimulation notions to refer to the notion of bisimulation of {\bf CL} over monotonic, $c$-models. In the sequel, we choose the simpler one, that is, monotonic $c$-bisimulation. One may easily see that this notion is stronger than monotonic bisimulation (for {\bf ML}). % and the corresponding notion of bisimilarity.

Similar to the case for $c$-bisimulation in Sec.~\ref{sec.c-bis}, we can show that

\weg{We also propose monotonic bisimulation under the old neighborhood semantics for $\mathcal{L}_\Delta$.
\begin{definition}[Monotonic $\Delta$-bisimulation]
Let $\M=\lr{S,N,V}$ and $\M'=\lr{S',N',V'}$ be both monotonic models. A nonempty binary relation $Z$ is a monotonic $\Delta$-bisimulation between $\M$ and $\M'$, if $sZs'$, then the following conditions hold:
\begin{enumerate}
\item[(Prop)] $s$ and $s'$ satisfy the same propositional variables;
%\item[(m-$\Delta$-Zig)] if $X\in N(s)$ or $\overline{X}\in N(s)$, then there exists $X'\in N'(s')$ or $\overline{X'}\in N'(s')$ such that for all $x'\in X'$, there is a $x\in X$ such that $xZx'$;
\item[(m-$\Delta$-Zig)] if $X\in N(s)$, then there exists $X'\in N'(s')$ or $\overline{X'}\in N'(s')$ such that for all $x'\in X'$, there is a $x\in X$ such that $xZx'$;
%\item[(m-$\Delta$-Zig)] if $X\in N(s)$, then there exists $X'\in N'(s')$ such that for all $x'\in X'$, there is an $x\in X$ such that $xZx'$, or, there exists $\overline{X'}\in N'(s')$ such that for all $x'\in\overline{X'}$, there is an $x\in \overline{X}$ such that $xZx'$;
%\item[(m-$\Delta$-Zag)] if $X'\in N'(s')$ or $\overline{X}\in N(s)$, then there exists $X\in N(s)$ or $\overline{X}\in N(s)$ such that for all $x\in X$, there is a $x'\in X'$ such that $xZx'$.
\item[(m-$\Delta$-Zag)] if $X'\in N'(s')$, then there exists $X\in N(s)$ such that for all $x\in X$, there is a $x'\in X'$ such that $xZx'$, or, there exists $\overline{X}\in N(s)$ such that for all $x\in \overline{X}$, there is a $x'\in \overline{X'}$ such that $xZx'$.
\end{enumerate}
\end{definition}

\begin{proposition}\label{prop.nbh-c}
Let $\M=\lr{S,N,V}$ and $\M'=\lr{S',N',V'}$ be monotonic models.
If $Z$ is a monotonic $\Delta$-bisimulation between $\M$ and $\M'$, then $Z$ is a monotonic $c$-bisimulation between $c(\M)$ and $c(\M')$, where $c(\M)$ and $c(\M')$ are c-variations of $\M$ and $\M'$, respectively. Therefore, for all $s\in\M$ and all $s'\in\M'$, if $(\M,s)\sim_{s\Delta}(\M',s')$, then $(c(\M),s)\bis_{sc}(c(\M'),s')$.
\end{proposition}

\begin{proposition}
Let $\M=\lr{S,N,V}$ and $\M'=\lr{S',N',V'}$ be monotonic models. If $Z$ is a monotonic $c$-bisimulation between $\M$ and $\M'$, then $Z$ is a monotonic $\Delta$-bisimulation between $\M$ and $\M'$. Therefore, for all $s\in \M$ and all $s'\in\M'$, if $(\M,s)\bis_{sc}(\M',s')$, then $(\M,s)\sim_{s\Delta}(\M',s')$.
\end{proposition}

\begin{proof}
Straightforward from the definitions of monotonic $c$-bisimulation and monotonic $\Delta$-bisimulation.
\end{proof}

\begin{corollary}\label{coro.c-bis-equi-nbh-bis}
Let $\M=(S,N,V)$ and $\M'=\lr{S',N',V'}$ be both monotonic, $c$-models. Then $Z$ is a monotonic $c$-bisimulation between $\M$ and $\M'$ iff $Z$ is an monotonic $\Delta$-bisimulation between $\M$ and $\M'$.
\end{corollary}}

\begin{proposition}\label{prop.invariance-sc-bis}
Let $\M$ and $\M'$ be monotonic, $c$-models, $s\in \M$ and $s'\in \M'$. If $(\M,s)\bis_{sc}(\M',s')$, then for all $\phi\in\mathcal{L}_\Delta$, $\M,s\Vvdash \phi\iff\M',s'\Vvdash\phi.$
\end{proposition}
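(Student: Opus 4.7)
The plan is to mirror the argument used for Proposition~\ref{prop.invariance-c-bis} (invariance under $c$-bisimulation), since the definition of monotonic $c$-bisimulation retains the same condition (ii) as $c$-bisimulation. I would proceed by induction on the structure of $\phi\in\mathcal{L}_\Delta$, with the Boolean cases handled routinely and the atomic case handled directly from condition (i) of the definition. The only nontrivial case is $\phi = \Delta\psi$.

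For the modal case, suppose $(\M,s)\bis_{sc}(\M',s')$ via a monotonic $c$-bisimulation $Z$. The key observation is that the pair $(\psi^{\M},\psi^{\M'})$ is $Z$-coherent: if $(x,x')\in Z$, then by induction hypothesis $\M,x\Vvdash\psi$ iff $\M',x'\Vvdash\psi$, i.e.\ $x\in\psi^{\M}$ iff $x'\in\psi^{\M'}$. Applying clause (ii) of monotonic $c$-bisimulation to this coherent pair gives $\psi^{\M}\in N(s)$ iff $\psi^{\M'}\in N'(s')$, which by the new semantics is exactly $\M,s\Vvdash\Delta\psi$ iff $\M',s'\Vvdash\Delta\psi$.

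I do not expect a genuine obstacle here; the proof is essentially identical in shape to that of Proposition~\ref{prop.invariance-c-bis}, and the assumptions of monotonicity and $(c)$ are not needed in the argument itself (they only ensure that we are inside the intended class of models where the bisimulation notion is defined). If anything, the only point worth spelling out is the $Z$-coherence of $(\psi^{\M},\psi^{\M'})$, exactly as in the proof of Proposition~\ref{prop.invariance-c-bis}. Consequently the proposition can simply be recorded with a short proof remarking on this reuse of the earlier argument.
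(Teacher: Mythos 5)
Your proof is correct and matches the paper's intent exactly: the paper states this proposition with the remark that it is shown "similar to the case for $c$-bisimulation," i.e.\ by the same induction using the $Z$-coherence of $(\psi^{\M},\psi^{\M'})$ and clause (ii). Your observation that monotonicity and $(c)$ play no role in the argument itself is also accurate.
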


\weg{\begin{proposition}\label{prop.invariance-sc-bis}
Let $\M$ and $\M'$ be monotonic, $c$-models, $s\in \M$ and $s'\in \M'$. Then for every $s\in \M$ and $s'\in S'$, if $(\M,s)\bis_{cs}(\M',s')$, then for all $\phi\in\mathcal{L}_\Delta$, we have $$\M,s\Vvdash \phi\iff\M',s'\Vvdash\phi.$$
\end{proposition}

\begin{proof}
Suppose that $\M$ and $\M'$ are both monotonic, $c$-models, and $(\M,s)\bis_{cs}(\M',s')$, then there is a $c$-monotonic bisimulation $Z$ between $\M$ and $\M'$ such that $sZs'$. The proof continues with induction on $\phi\in\mathcal{L}_\Delta$. We need only consider the nontrivial case $\Delta\phi$.

Assume that $\M,s\Vvdash\Delta\phi$, then $\phi^\M\in N(s)$. By (c-m-Zig), there exists $X'\in N'(s')$ such that for all $x'\in X'$, there is an $x\in \phi^\M$ such that $xZx'$. By induction hypothesis, we obtain $x'\in \phi^{\M'}$, and thus $X'\subseteq \phi^{\M'}$. Using the monotonicity $(s)$, we infer $\phi^{\M'}\in N'(s')$, and therefore $\M',s'\Vvdash\Delta\phi$. The converse is similar, by using (c-m-Zag) instead.
\end{proof}}

\weg{\begin{proposition}\label{prop.invariance-sdelta-bis}
Let $\M$ and $\M'$ be monotonic models. Then for every $s\in \M$ and $s'\in S'$, if $(\M,s)\sim_{s\Delta}(\M',s')$, then for all $\phi\in\mathcal{L}_\Delta$, we have $$\M,s\Vdash \phi\iff\M',s'\Vdash\phi.$$
\end{proposition}}

\begin{theorem}[Hennessy-Milner Theorem for monotonic $c$-bisimulation]
Let $\M$ and $\M'$ be monotonic, $\Delta$-saturated $c$-models, $s\in \M$ and $s'\in \M'$. If for all $\phi\in\mathcal{L}_\Delta$, $\M,s\Vvdash \phi\iff\M',s'\Vvdash\phi$, then $(\M,s)\bis_{sc}(\M',s')$.
\end{theorem}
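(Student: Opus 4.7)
The plan is to reduce this directly to the Hennessy--Milner Theorem for $c$-bisimulation already established above, rather than redoing the saturation argument from scratch. The observation that makes this work is that the conditions (i) and (ii) in the definition of monotonic $c$-bisimulation are \emph{literally identical} to conditions (i) and (ii) in the definition of $c$-bisimulation: atomic harmony plus the coherence biconditional $U \in N(s) \iff U' \in N'(s')$ for every $Z$-coherent pair $(U,U')$. The only difference between the two notions is the class of models on which they are defined, namely $c$-models versus monotonic $c$-models. So any $c$-bisimulation between two models that happen to be monotonic $c$-models is automatically a monotonic $c$-bisimulation.

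With this in hand, the steps are straightforward. First I would note that by hypothesis $\M$ and $\M'$ are monotonic, $\Delta$-saturated $c$-models; in particular, they are $\Delta$-saturated $c$-models in the sense required by the Hennessy--Milner Theorem for $c$-bisimulation. Since $\M,s \equiv_{\mathcal{L}_\Delta} \M',s'$, that theorem supplies a $c$-bisimulation $Z$ between $\M$ and $\M'$ with $(s,s')\in Z$. By the observation in the previous paragraph, the monotonicity of $\M$ and $\M'$ promotes $Z$ to a monotonic $c$-bisimulation between $\M$ and $\M'$. Hence $(\M,s)\bis_{sc}(\M',s')$, as required.

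There is essentially no obstacle; the only thing to verify is the syntactic coincidence of the two bisimulation definitions, which is immediate by inspection. One could alternatively give a self-contained argument by showing that $\equiv_{\mathcal{L}_\Delta}$ itself is a monotonic $c$-bisimulation on $\Delta$-saturated $c$-models, running essentially the same saturation argument that proves Theorem~\ref{thm.hm-nbh-delta-bis}; but this would merely duplicate work already done, so the reduction above is preferable.
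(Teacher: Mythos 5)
Your proof is correct and matches the paper's intent: the paper omits an explicit proof, indicating only that the argument is "similar to the case for $c$-bisimulation", i.e.\ a reduction to the previously established Hennessy--Milner machinery. Your version is a slightly more streamlined reduction --- invoking the $c$-bisimulation Hennessy--Milner Theorem directly and observing that conditions (i)--(ii) of the two bisimulation notions coincide verbatim, so that a $c$-bisimulation between monotonic $c$-models is automatically a monotonic $c$-bisimulation --- and this observation is accurate.
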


Similarly, we can define regular $c$-bisimulation, which is obtained by adding the property $(i)$ into monotonic $c$-bisimulation, and show the corresponding Hennessy-Milner Theorem. We omit the details due to space limitation.

\weg{\begin{theorem}[Hennessy-Milner Theorem for $\bis_{cs}$]
Let $\M$ and $\M'$ be finite, monotonic, $c$-models, $s\in \M$ and $s'\in \M'$. If for all $\phi\in\mathcal{L}_\Delta$, $\M,s\Vvdash \phi\iff\M',s'\Vvdash\phi$, then $(\M,s)\bis_{cs}(\M',s')$.
\end{theorem}

\begin{proof}
Define $$Z=\{(x,x')\mid \M,x\Vvdash\phi\iff\M',x'\Vvdash\phi\}.$$ We show that $Z$ is a monotonic $c$-bisimulation between $\M$ and $\M'$. Suppose that $sZs'$.

For (Prop): Straightforward.

For (c-m-Zig): Assume that $X\in N(s)$. We need to find an $X'\in N'(s')$ such that for all $x'\in X'$ there is an $x\in X$ such that $xZx'$.

In order to arrive at a contradiction, we suppose for all $X'\in N'(s')$ there exists $x'\in X'$ such that for all $x\in X$, it is not the case that $xZx'$. Since $\M'$ is finite, it should be easy to show that $N'(s')$ is finite, so we may assume that $N'(s')=\{X_1',\cdots,X_n'\}$; since $\M$ is finite, $X$ is finite, so we may assume that $X=\{x_1,\cdots, x_m\}$.

Thus for all $X_i'\in N'(s')$ there exists $x_i'\in X_i'$ such that for all $x_j\in X$, it is not the case that $x_jZx'_i$, viz., there is a formula $\phi_{ij}$ such that $\M,x_j\Vvdash\phi_{ij}$ but $\M',x_i'\not\Vvdash\phi_{ij}$. Consider the following formula
$$\phi:=\bigwedge_{i=1,\cdots,n}\bigvee_{j=1,\cdots,m}\phi_{ij}.$$
Thus all $x_j\in X$ satisfy $\phi$, that is, $X\subseteq \phi^\M$. Since $X\in N(s)$ and $\M$ is monotonic, $\phi^\M\in N(s)$, and hence $\M,s\Vvdash\Delta\phi$.

On the other hand, for all $x_i'\in X_i'$, we have $\M',x_i'\not\Vvdash \phi$. This entails that $X_i'\neq \phi^{\M'}$. By the definition of $N'(s')$, $\phi^{\M'}\notin N'(s')$, which means $\M',s'\not\Vvdash\Delta\phi$. Therefore, we arrive at a contradiction since we have supposed $sZs'$.

The proof for (c-m-Zag) is similar.
\end{proof}}

\weg{\begin{theorem}[Hennessy-Milner Theorem for $\sim_{s\Delta}$]
Let $\M$ and $\M'$ be finite, monotonic models. If for all $\phi\in\mathcal{L}_\Delta$, $\M,s\Vdash \phi\iff\M',s'\Vdash\phi$, then $(\M,s)\sim_{s\Delta}(\M',s')$.
\end{theorem}

\begin{proposition}
Let $\M$ and $\M'$ be both monotonic $c$-models. If $Z$ is a monotonic $c$-bisimulation between $\M$ and $\M'$, then $Z$ is also a $c$-bisimulation between $\M$ and $\M'$.
\end{proposition}}

\weg{Since monotonic $c$-bisimulation is just $c$-bisimulation plus monotonicity, by Thms.~\ref{thm.van-benthem-cbis} and \ref{thm.van-benthem-cbis2}, we immediately obtain
\begin{theorem}
Every modal formula is equivalent to an $\mathcal{L}_\Delta$-formula on the class of monotonic models iff it is invariant under monotonic $c$-bisimulation.
\end{theorem}

\begin{theorem}
Every first-order $\mathcal{L}_1$-formula is equivalent to an $\mathcal{L}_\Delta$-formula on the class of monotonic models iff it is invariant under monotonic $c$-bisimulation.
\end{theorem}}

\weg{\begin{proof}
`Only if': immediate from Prop.~\ref{prop.invariance-sc-bis}.

`If': Suppose a modal formula $\phi$ is invariant under $c$-monotonic bisimulation, by using Prop.~\ref{prop.cs-sc}, $\phi$ is also invariant under monotonic $c$-bisimulation.
\end{proof}}

\section{Quasi-filter structures}\label{Sec.qf-structures}

We define a class of structures, called `quasi-filter structures'.\footnote{Note that our notion of quasi-filter is different from that in~\cite[p.~215]{Chellas1980}, where quasi-filter is defined as $(s)+(i)$. For example, the latter notion is not necessarily closed under complements.} %We will show that, although every Kripke model has a pointwise equivalent quasi-filter model, the converse does not hold.

\begin{definition}[Quasi-filter frames and models]\label{def.ws}
A neighborhood frame $\mathcal{F}=\lr{S,N}$ is a {\em quasi-filter frame}, if for all $s\in S$, $N(s)$ possesses the properties $(n)$, $(i)$, $(c)$, and $(ws)$, where $(ws)$ means being closed under supersets or co-supersets: for all $X,Y,Z\subseteq S$, $X\in N(s)$ implies $X\cup Y\in N(s)$ or $(S\backslash X)\cup Z\in N(s)$.
%\begin{enumerate}
%\item[(1)] $N(s)$ is closed under complements: $X\in N(s)$ implies $S\backslash X\in N(s)$.
%\item[(2)] $N(s)$ is closed under intersections: $X\in N(s)$ and $Y\in N(s)$ implies $X\cap Y\in N(s)$.
%\item[(3)] $N(s)$ is closed under supersets or co-supersets: $X\in N(s)$ implies $X\cup Y\in N(s)$ or $(S\backslash X)\cup Z\in N(s)$.
%\item[(4)] $N(s)$ contains the unit: $S\in N(s)$.
%\end{enumerate}

We say a neighborhood model is a {\em quasi-filter model}, if its underlying frame is a quasi-filter frame.
\end{definition}

The main result of this section is the following: for {\bf CL}, every Kripke model has a pointwise equivalent quasi-filter model, but {\em not} vice versa. %To say two models with the same domain are {\em pointwise equivalent}, if every world in those models satisfies the same formulas.

%\begin{definition}[Quasi-filter frames and models]
%A neighborhood frame $\mathcal{F}=\lr{S,N}$ is a {\em quasi-filter frame}, if for all $s\in S$, $N(s)$ possesses the following properties: for all $X,Y,Z\subseteq S$,
%\begin{enumerate}
%\item[(1)] $N(s)$ is closed under complements: $X\in N(s)$ implies $S\backslash X\in N(s)$.
%\item[(2)] $N(s)$ is closed under intersections: $X\in N(s)$ and $Y\in N(s)$ implies $X\cap Y\in N(s)$.
%\item[(3)] $N(s)$ is closed under supersets or co-supersets: $X\in N(s)$ implies $X\cup Y\in N(s)$ or $(S\backslash X)\cup Z\in N(s)$.
%\item[(4)] $N(s)$ contains the unit: $S\in N(s)$.
%\end{enumerate}
%We say a neighborhood model is a {\em quasi-filter model}, if its underlying frame is a quasi-filter frame.
%\end{definition}

%If we think of $N(s),S\backslash X,\cap,\cup,S$ as ``noncontingent at $s$'', ``the negation of $X$'', ``conjunction'', ``disjunction'', ``tautology'', respectively, then the above four items mean, respectively, ``if $X$ is noncontingent at $s$, then so is its negation'', ``if both $X$ and $Y$ are noncontingent at $s$, so is their conjunction'', ``if $X$ is noncontingent at $s$, then so is the disjunction of it and something, or so is the disjunction of its negation and something'', ``the tautology is noncontingent''.

\begin{definition}[qf-variation]\label{def.qf-variation}
Let $\M=\lr{S,R,V}$ is a Kripke model. $qf(\M)$ is said to be a {\em qf-variation} of $\M$, if $qf(\M)=\lr{S,qfN,V}$, where for any $s\in S$, $qfN(s)=\{X\subseteq S: \text{ for any }t,u\in S,\text{ if }sRt\text{ and }sRu,\text{ then }(t\in X\text{ iff }u\in X)\}$.
\end{definition}

The definition of $qfN$ is also quite natural, since just as ``for any $t,u\in S$, if $sRt$ and $sRu$, then ($t\in X$ iff $u\in X$)'' corresponds to the Kripke semantics of $\Delta$, $X\in qfN(s)$ corresponds to the new neighborhood semantics of the operator, as will be seen more clearly in Prop.~\ref{prop.rel-equiv-qf}. Note that the definition of $qfN$ can be simplified as follows: $$qfN(s)=\{X\subseteq S:R(s)\subseteq X\text{~or~}R(s)\subseteq S\backslash X\}.$$
It is easy to see that every Kripke model has a (sole) qf-variation. We will demonstrate that, every such qf-variation is a quasi-filter model.

The following proposition states that every Kripke model and its qf-variation are pointwise equivalent.
\begin{proposition}\label{prop.rel-equiv-qf}
Let $\M=\lr{S,R,V}$ be a Kripke model. Then for all $\phi\in\mathcal{L}_\Delta$, for all $s\in S$,  we have
$\M,s\vDash\phi\iff qf(\M),s\Vvdash\phi,$ i.e., $\phi^{\M_\vDash}=\phi^{qf(\M)}$, where $\phi^{\M_\vDash}=\{t\in S\mid \M,t\vDash\phi\}$.
\end{proposition}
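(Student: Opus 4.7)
The plan is to prove the proposition by induction on the structure of $\phi\in\mathcal{L}_\Delta$. The atomic case is immediate since $qf(\M)$ and $\M$ share the same valuation $V$, and the Boolean cases $\neg\phi$ and $\phi\land\psi$ follow routinely from the induction hypothesis. So all the real content is in the modal case $\Delta\phi$, which is where I would focus the argument.

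Before starting, I would record the simplified form of $qfN$ noted just after Def.~\ref{def.qf-variation}, namely $qfN(s)=\{X\subseteq S:R(s)\subseteq X\text{ or }R(s)\subseteq S\backslash X\}$. This is the bridge between the two semantics, and verifying it is a one-line unfolding: $X\in qfN(s)$ says that no two $R$-successors of $s$ disagree on membership in $X$, which is the same as saying $R(s)$ lies entirely inside $X$ or entirely inside its complement.

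For the inductive step at $\Delta\phi$, I would chain the following equivalences. First, $\M,s\vDash\Delta\phi$ unfolds (by the standard Kripke semantics of non-contingency) to: for all $t,u\in S$ with $sRt$ and $sRu$, $\M,t\vDash\phi$ iff $\M,u\vDash\phi$; equivalently, $R(s)\subseteq\phi^{\M_\vDash}$ or $R(s)\subseteq S\backslash\phi^{\M_\vDash}$. Applying the induction hypothesis $\phi^{\M_\vDash}=\phi^{qf(\M)}$, this becomes $R(s)\subseteq\phi^{qf(\M)}$ or $R(s)\subseteq S\backslash\phi^{qf(\M)}$. By the simplified characterization of $qfN(s)$ above, this is exactly $\phi^{qf(\M)}\in qfN(s)$, which by the new semantics $\Vvdash$ is $qf(\M),s\Vvdash\Delta\phi$. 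Each step is an ``iff'', so the chain gives the desired equivalence.

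I do not expect any real obstacle here; the work is essentially packaged into Def.~\ref{def.qf-variation} itself, which is designed precisely so that $X\in qfN(s)$ mirrors the Kripke-style clause for $\Delta$ under the new neighborhood semantics. The only point worth stating carefully is the equivalence between the ``disjunctive'' form of the Kripke clause (all successors agree on $\phi$) and the ``covering'' form ($R(s)\subseteq\phi^{\M_\vDash}$ or $R(s)\subseteq S\backslash\phi^{\M_\vDash}$), which is a trivial case split on whether any successor satisfies $\phi$.
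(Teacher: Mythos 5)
Your proposal is correct and follows essentially the same route as the paper: induction on $\phi$ with the only nontrivial case being $\Delta\phi$, handled by a chain of equivalences through the Kripke clause, the induction hypothesis, and the definition of $qfN$. The only cosmetic difference is that you pass through the simplified characterization $qfN(s)=\{X\subseteq S: R(s)\subseteq X\text{ or }R(s)\subseteq S\backslash X\}$, whereas the paper applies the original form of Def.~\ref{def.qf-variation} directly; the two are interchangeable, as the paper itself notes.
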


\begin{proof}
By induction on $\phi$. The nontrivial case is $\Delta\phi$.
\[
\begin{array}{lll}
\M,s\vDash\Delta\phi&\iff&\text{for all }t,u\in S, \text{ if }sRt\text{ and }sRu,\text{ then }\\
&&(t\in\phi^{\M_\vDash}\iff u\in\phi^{\M_\vDash})\\
&\stackrel{\text{IH}}\iff &\text{for all }t,u\in S, \text{ if }sRt\text{ and }sRu,\text{ then }\\
&&(t\in\phi^{qf(\M)}\iff u\in\phi^{qf(\M)})\\
&\stackrel{\text{Def.~}qfN}\iff&\phi^{qf(\M)}\in qfN(s)\\
&\iff&qf(\M),s\Vvdash\Delta\phi.\\
\end{array}
\]
\end{proof}

\begin{proposition}\label{prop.qf-model}
Let $\M$ be a Kripke model. Then \weg{its qf-variation }$qf(\M)$ is a quasi-filter model.
\end{proposition}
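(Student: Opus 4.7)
The plan is to verify directly, for each $s\in S$, that $qfN(s)$ satisfies the four defining properties $(n)$, $(i)$, $(c)$, and $(ws)$ of a quasi-filter frame. I will use the simplified form
\[
qfN(s) = \{X\subseteq S : R(s)\subseteq X \text{ or } R(s)\subseteq S\setminus X\}
\]
throughout, since all four checks reduce to elementary set-theoretic reasoning about whether $R(s)$ sits inside $X$ or inside its complement.

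Concretely, I would handle the four properties in the following order. First, $(n)$: $S\in qfN(s)$ is immediate since $R(s)\subseteq S$. Next, $(c)$: the definition of $qfN(s)$ is symmetric in $X$ and $S\setminus X$, so closure under complements is immediate. Then $(ws)$: if $X\in qfN(s)$ and $Y,Z\subseteq S$ are arbitrary, split on which disjunct holds for $X$. If $R(s)\subseteq X$, then $R(s)\subseteq X\cup Y$, giving $X\cup Y\in qfN(s)$; if instead $R(s)\subseteq S\setminus X$, then $R(s)\subseteq (S\setminus X)\cup Z$, giving $(S\setminus X)\cup Z\in qfN(s)$.

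The only mildly involved part is $(i)$, closure under intersections, for which I would do a four-case analysis on $X,Y\in qfN(s)$. If $R(s)\subseteq X$ and $R(s)\subseteq Y$, then $R(s)\subseteq X\cap Y$. In each of the remaining three cases at least one of $R(s)\subseteq S\setminus X$ or $R(s)\subseteq S\setminus Y$ holds; since $S\setminus X\subseteq S\setminus(X\cap Y)$ and $S\setminus Y\subseteq S\setminus(X\cap Y)$, we obtain $R(s)\subseteq S\setminus(X\cap Y)$ in all three, hence $X\cap Y\in qfN(s)$.

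I do not expect any real obstacle here: every property follows from a one- or two-line unfolding of the definition of $qfN$. The argument is completely structural, and the only reason to spell it out is to make visible that the chosen definition of $qfN(s)$ is exactly tailored so that the dichotomy ``$R(s)\subseteq X$ or $R(s)\subseteq S\setminus X$'' survives under the four closure operations required by Def.~\ref{def.ws}.
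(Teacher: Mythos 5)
Your proof is correct and follows essentially the same route as the paper: a direct verification of the four properties $(n)$, $(i)$, $(c)$, $(ws)$ for $qfN(s)$. The only difference is that you work throughout with the simplified characterization $qfN(s)=\{X : R(s)\subseteq X \text{ or } R(s)\subseteq S\setminus X\}$ (which the paper states just before the proposition), which lets you prove $(ws)$ directly by a two-case split where the paper argues by contradiction with explicit witnesses $t_1,u_1,t_2,u_2$; both arguments are sound.
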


\begin{proof}
Let $\M=\lr{S,R,V}$. For any $s\in S$, we show that $qf(\M)$ has those four properties of quasi-filter models.

$(n)$: it is clear that $S\in qfN(s)$.

$(i)$: assume that $X,Y\in qfN(s)$, we show $X\cap Y\in qfN(s)$. By assumption, for all $s,t\in S$, if $sRt$ and $sRu$, then $t\in X$ iff $u\in X$, and for all $s,t\in S$, if $sRt$ and $sRu$, then $t\in Y$ iff $u\in Y$. Therefore, for all $t,u\in S$, if $sRt$ and $sRu$, we have that $t\in X\cap Y$ iff $u\in X \cap Y$. This entails $X\cap Y\in qfN(s)$.

$(c)$: assume that $X\in qfN(s)$, to show $S\backslash X\in qfN(s)$. By assumption, for all $s,t\in S$, if $sRt$ and $sRu$, then $t\in X$ iff $u\in X$. Thus for all $s,t\in S$, if $sRt$ and $sRu$, then $t\in S\backslash X$ iff $u\in S\backslash X$, i.e., $S\backslash X\in qfN(s)$.

$(ws)$: assume, for a contradiction, that for some $X,Y,Z\subseteq S$ it holds that $X \in qfN(s)$ but $X \cup Y \notin qfN(s)$ and $(S\backslash X) \cup Z \notin qfN(s)$. W.l.o.g. we assume that there are $t_1,u_1$ such that $sRt_1$, $sRu_1$ and $t_1\in X\cup Y$ but $u_1\notin X\cup Y$, and there are $t_2, u_2$ such that $sRt_2$, $sRu_2$ and $t_2 \notin (S\backslash X)\cup Z$ but $u_2\in  (S\backslash X)\cup Z$. Then $t_2\in X$ and $u_1\notin X$, which is contrary to the fact that $X\in qfN(s)$ and $sRu_1,sRt_2$.%, as desired.
\end{proof}

%Props.~\ref{prop.rel-equiv-qf} and \ref{prop.qf-model} provide an alternative proof for Coro.~\ref{coro.Krip-quasi}.
The following result is immediate by Props.~\ref{prop.rel-equiv-qf} and \ref{prop.qf-model}.

\begin{corollary}\label{coro.Krip-quasi}
For {\bf CL}, every Kripke model has a pointwise equivalent quasi-filter model.
\end{corollary}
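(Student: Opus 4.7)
The proof is essentially a direct assembly of the two preceding propositions, so the plan is to state this assembly explicitly rather than to redo any work. Given an arbitrary Kripke model $\M = \lr{S,R,V}$, I would take the witnessing quasi-filter model to be its qf-variation $qf(\M)$ as constructed in Def.~\ref{def.qf-variation}. The two facts I need are (a) that $qf(\M)$ is a quasi-filter model at all, and (b) that $\M$ and $qf(\M)$ are pointwise equivalent on $\mathcal{L}_\Delta$, and both are already in hand.

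For (a), I would simply appeal to Prop.~\ref{prop.qf-model}, which verifies the four closure conditions $(n),(i),(c),(ws)$ for $qfN$. For (b), I would invoke Prop.~\ref{prop.rel-equiv-qf}, which yields $\M,s\vDash\phi \iff qf(\M),s\Vvdash\phi$ for every $\phi\in\mathcal{L}_\Delta$ and every $s\in S$. Since $\M$ and $qf(\M)$ share the same underlying domain $S$, this equivalence at each state is exactly what it means to be pointwise equivalent in the sense introduced in Section~3.

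There is essentially no obstacle beyond noting the compatibility of semantics: Prop.~\ref{prop.rel-equiv-qf} compares $\M$ under the Kripke satisfaction relation $\vDash$ with $qf(\M)$ under the new neighborhood relation $\Vvdash$, which is precisely the setting in which pointwise equivalence has been defined for the new semantics. So the proof collapses to a single line: by Prop.~\ref{prop.qf-model}, $qf(\M)$ is a quasi-filter model, and by Prop.~\ref{prop.rel-equiv-qf}, $\M$ and $qf(\M)$ are pointwise equivalent, which establishes the corollary. I would also briefly remark (as motivation, not as part of the proof) that the converse fails, flagging that a counterexample lies outside this corollary's scope and would be addressed separately.
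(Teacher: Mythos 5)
Your proposal is correct and matches the paper exactly: the corollary is stated there as immediate from Prop.~\ref{prop.rel-equiv-qf} (pointwise equivalence of $\M$ and $qf(\M)$) and Prop.~\ref{prop.qf-model} ($qf(\M)$ is a quasi-filter model). Nothing is missing.
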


\weg{\begin{proposition}\label{prop.augm-quasi}
For every augmented neighborhood model $\M$, there exists a pointwise equivalent quasi-filter model $\M'$, that is, for all worlds $s$ and all $\phi\in\mathcal{L}_\Delta$, $$\M',s\Vvdash\phi\iff\M,s\Vdash\phi.$$
\end{proposition}

\begin{proof}
let $\M=\lr{S,N,V}$ be augmented and $s\in S$, and define $\M'=c(\M)$, i.e., $\M'=\lr{S,cN,V}$, where for each $s\in S$, $cN(s)=\{X\subseteq S\mid X\in N(s)\text{ or }S\backslash X\in N(s)\}$. Recall that every augmented model is a filter, i.e., has the properties $(s)$, $(i)$, $(n)$, see e.g.~\cite[p.~220]{Chellas1980}. By Prop.~\ref{prop.neigh-c}, we need only show that $\M'$ is a quasi-filter model.
\begin{itemize}
\item $(n)$: this is because $S\in N(s)$.
\item $(i)$: suppose $X,Y\in cN(s)$, then $X\in N(s)$ or $S\backslash X\in N(s)$, and $Y\in N(s)$ or $S\backslash Y\in N(s)$. If $X,Y\in N(s)$, since $N(s)$ is closed under intersections, $X\cap Y\in N(s)$, thus $X\cap Y\in cN(s)$. If $S\backslash X\in N(s)$ and $S\backslash Y\in N(s)$, then $(S\backslash X)\cap (S\backslash Y)\in N(s)$. Since $N(s)$ is closed under supersets and $(S\backslash X)\cap (S\backslash Y)\subseteq S\backslash (X\cap Y)$, we obtain $S\backslash (X\cap Y)\in N(s)$, and hence $X\cap Y\in cN(s)$. If $S\backslash X\in N(s)$ and $Y\in N(s)$, then $(S\backslash X)\cap Y\in N(s)$. Since $N(s)$ is closed under supersets and $(S\backslash X)\cap Y\subseteq S\backslash (X\cap Y)$, we obtain $S\backslash (X\cap Y)\in N(s)$, hence $X\cap Y\in cN(s)$. If $X\in N(s)$ and $S\backslash Y\in N(s)$, similar to the third case, we can derive $X\cap Y\in cN(s)$.
\item $(c)$: immediate by definition of $cN$.
\item $(ws)$: suppose $X\in cN(s)$, then $X\in N(s)$ or $S\backslash X\in N(s)$. If $X\in N(s)$, since $N(s)$ is closed under supersets, we have $X\cup Y\in N(s)$, thus $X\cup Y\in cN(x)$. If $S\backslash X\in N(s)$, thus $(S\backslash X)\cup Z\in N(s)$, thus $(S\backslash X)\cup Z\in cN(s)$.
\end{itemize}
\end{proof}

Since for contingency logic, every Kripke model has a pointwise equivalent augmented neighborhood model~\cite{FanvD:neighborhood}, and every augmented neighborhood model has a pointwise equivalent quasi-filter model (Prop.~\ref{prop.augm-quasi}), thus every Kripke model has a pointwise equivalent quasi-filter model.
\begin{corollary}\label{coro.Krip-quasi}
For every Kripke model, there is a pointwise equivalent quasi-filter model.
\end{corollary}}

\weg{We thus obtain the soundness and completeness of $\SPLKw$ with respect to the new neighborhood semantics for $\CL$.
\begin{theorem}
Let $\Gamma\subseteq \CL$ and $\phi\in \CL$. $$\Gamma\Vdash^q\phi\iff\Gamma\vdash\phi.$$
\end{theorem}

\begin{proof}
The soundness is immediate from the frame correspondence results of the four axioms.

For the completeness, since every $\SPLKw$-consistent set is satisfiable in a Kripke model, by Coro.~\ref{coro.Krip-quasi}, every $\SPLKw$-consistent is satisfiable in a quasi-filter model.
\end{proof}}

\weg{However, as we will show, not every quasi-filter model has a pointwise equivalent Kripke model. First, not every quasi-filter frame is augmented, since it may not be closed under supersets, as illustrated in the following example. %since not every quasi-filter model has a pointwise equivalent augmented model, as illustrated in the following example.
%not every quasi-filter frame is augmented, since it may not be closed under supersets, as illustrated in the following example.
\begin{example}\label{example.not-superset}
The following $\mathcal{F}$ is a quasi-filter frame, but $\mathcal{F}$ is not closed under supersets, since for instance, $\emptyset\in N(s)$ but $\{s\}\notin N(s)$. (Here we use an arrow from a world $s$ to a set $X$ to mean that $X\in N(s)$.)
$$
\xymatrix{\{s,t\}&\emptyset\\
s\ar[u]\ar[ur]&t\ar[ul]\ar[u]\\}
$$
\end{example}

\weg{\begin{proposition}
For every Kripke model, there exists a pointwise equivalent quasi-filter model.
\end{proposition}

\begin{proof}
Let $\M^K=\lr{S,R,V}$ be a Kripke model. Define $\M^N=\lr{S,N,V}$ where for each $s\in S$,
$$N(s)=\{X: \text{ for all } t, u\in S, \text{ if } sRt, sRu, \text{ then } t\in X \text{ iff } u\in X\}.$$
First, we show that $\M^N$ is indeed a quasi-filter model, i.e., to show that it satisfies those four properties.
\begin{itemize}
\item Containing the unit: trivial.
\item Closure under complements: trivial.
\item Closure under intersections: assume that $X,Y\in N(s)$, we show $X\cap Y\in N(s)$.
      Observe that for all $t,u\in S$ such that $sRt$, $sRu$, we have that $t\in X\cap Y$ iff $t \in X$ and $t\in Y$ iff $u \in X$ and $u \in Y$ iff $u\in X \cap Y$.
\item Closure under supersets or co-supersets: assume for a contradiction that for some $X \in N(s)$ and some $Y,Z\subseteq S$ it holds that $X \cup Y \notin N(s)$ and $(S\backslash X) \cup Z \notin N(s)$. W.l.o.g. we assume that there are $t_1,u_1$ such that $sRt_1$, $sRu_1$ and $t_1\in X\cup Y$ but $u_1\notin X\cup Y$, and there are $t_2, u_2$ such that $sRt_2$, $sRu_2$ and $t_2 \notin (S\backslash X)\cup Z$ but $u_2\in  (S\backslash X)\cup Z$. Then $t_2\in X$ and $u_1\notin X$, which  is contrary to the fact that $X\in N(s)$ and $sRu_1,sRt_2$.
\end{itemize}

\medskip

Next, we prove that $\M^K$ and $\M^N$ are pointwise equivalent. The proof is by induction on formulas. We only need consider the case $\Delta\phi$. Let $s\in S$. Suppose $\M^K,s\vDash\Delta\phi$. Then for all $t,u\in S$, if $sRt$ and $sRu$, then $\M^K,t\vDash \phi$ iff $\M^K,u\vDash\phi$. By induction hypothesis, this is equivalent to the fact that for all $t,u\in S$, if $sRt$ and $sRu$, then $t\in \phi^{\M^N}$ iff $u\in\phi^{\M^N}$. By the definition of $N$, we obtain that $\phi^{\M^N}\in N(s)$. Therefore, $\M^N,s\vDash\Delta\phi$. The converse can be shown the other way around.
\end{proof}}

%There is a quasi-filter model that has no pointwise equivalent augmented model.
Next, not every quasi-filter model has a pointwise equivalent augmented model, as illustrated below.
\begin{example}\label{example.not-equi}
We adapt the frame $\mathcal{F}$ in Example~\ref{example.not-superset} as a model $\M=\lr{S,N,V}$:
$$
\xymatrix{\{s,t\}&\emptyset\\
s:p\ar[u]\ar[ur]&t:\neg p\ar[ul]\ar[u]\\}
$$
Since $\M$ is not augmented, its augmentation must be pictured as follows, denoted by $\M'=\lr{S,N',V}$:
$$
\xymatrix{\{s,t\}&\{s\}&\{t\}&\emptyset\\
s:p\ar[u]\ar[ur]\ar[urr]\ar[urrr]&&&t:\neg p\ar[ulll]\ar[ull]\ar[ul]\ar[u]\\}
$$
Now observe that since $p^\M=\{s\}\notin N(s)$, we have $\M,s\not\Vvdash\Delta p$; however, since $p^{\M'}=\{s\}\in N'(s)$, we obtain $\M',s\Vdash \Delta p$.

\noteJF{The augmentation seems to be
$$
\xymatrix{&\{s,t\}&\\
s:p\ar[ur]&&t:\neg p\ar[ul]\\}
$$ Therefore, the notion of augmentation may need to be redefined.}
\end{example}

As a corollary, not every quasi-filter model has a pointwise equivalent Kripke model: if not, since every Kripke model has an pointwise equivalent augmented model, it would follow that every quasi-filter model has a pointwise equivalent augmented model, contrary to Example~\ref{example.not-equi}.
\begin{corollary}\label{coro.quasi-noKripke}
There is a quasi-filter model that has no pointwise equivalent Kripke model.
\end{corollary}

\noteJF{No! The required model could be
$$
\xymatrix{
s:p\ar@(ur,ul)\ar[rr]&&t:\neg p\ar@(ul,ur)\ar[ll]\\}
$$}}

\weg{Quasi-filter model is not closed under supersets, thus is not augmented. But Kripke models corresponds to augmented models, thus  quasi-filter models may have no pointwise equivalent Kripke models. We can show that given $(c)$, an augmented model is a quasi-filter model. Since every Kripke model has an pointwise equivalent augmented model. And every neighborhood model has a pointwise equivalent $c$-model, thus every augmented model has a pointwise equivalent quasi-filter model, thus every Kripke model has a poinwise quasi-filter model.}

\weg{\begin{theorem}
For all $\Sigma\cup\{\phi\}\subseteq \CL$, we have $\Sigma\vDash\phi$ iff $\Sigma\Vdash^q\phi$.
\end{theorem}

\begin{proof}
We will show that for every formula $\phi\in\CL$, if it is satisfiable in a Kripke model, then it is satisfiable in a quasi-filter model, and vice versa. The nontrivial case is $\nabla\phi$.

Suppose there exists $\M=\lr{S, R, V}$ and $s$ in $S$ such that $\M,s\vDash \nabla\phi$, we need to find a quasi-filter model and a world in that model where $\nabla\phi$ is true.

By supposition, there are two successors $t$ and $u$ of $s$ such that $\M,t\vDash \phi$ but $\M,u\nvDash\phi$. Define $\M'=\lr{S, N, V}$, where for any $s'\in S$, $$N(s')=\{X: \text{ for all } t', u'\in S, \text{ if } s'Rt', s'Ru', \text{ then } t'\in X \text{ iff } u'\in X\}.$$

First, it is easy to check that $S\backslash(\phi^\M)\notin N(s)$, by IH we obtain $S\backslash (\phi^{\M'})\notin N(s)$, i.e. $M', s\Vdash\nabla\phi$.

Second, $\M'$ satisfies those four properties, thus it is a quasi-filter model:
\begin{itemize}
\item Containing the unit: trivial.
\item Closure under complements: trivial.
\item Closure under intersections: assume that $X,Y\in N(s')$, we show $X\cap Y\in N(s')$.
      Observe that for all $t',u'\in S$ such that $s'Rt'$, $s'Ru'$, we have that $t'\in X\cap Y$ iff $t' \in X$ and $t'\in Y$ iff $u' \in X$ and $u' \in Y$ iff $u'\in X \cap Y$.
\item Closure under supersets or co-supersets: assume for a contradiction that for some $X \in N(s')$ and some $Y,Z\subseteq S$ it holds that $X \cup Y \notin N(s')$ and $(S\backslash X) \cup Z \notin N(s')$. W.l.o.g. we assume that there are $t_1,u_1$ such that $s'Rt_1$, $s'Ru_1$ and $t_1\in X\cup Y$ but $u_1\notin X\cup Y$, and there are $t_2, u_2$ such that $s'Rt_2$, $s'Ru_2$ and $t_2 \notin (S\backslash X)\cup Z$ but $u_2\in  (S\backslash X)\cup Z$. Then $t_2\in X$ and $u_1\notin X$, which  is contrary to the fact that $X\in N(s')$ and $s'Ru_1,s'Rt_2$.
\end{itemize}

Therefore, $(\M',s)$ is the desired pointed model.

\medskip

Conversely, suppose that there is a quasi-filter model $\mathcal{O}=\lr{W,N,V}$ and $w$ in $W$ such that $\mathcal{O},w\Vdash\nabla\phi$. We need to find a Kripke model and a world therein such that $\nabla\phi$ is true in that pointed model. Define a model $\mathcal{O'}=\lr{W,R,V}$ such that $R$ satisfies the following condition: for all $y$ in $W$, for all $X\subseteq S$,
$$R(y)\subseteq X\text{ iff }X\in N(y).~\text{This definition is not well-defined, since it will infer that }R(y)=\emptyset!$$
By supposition, $W\backslash\phi^{\mathcal{O}}\notin N(w)$. Since $N (w)$ is closed under complements, $\phi^{\mathcal{O}}\notin N(w)$. From $\phi^{\mathcal{O}}\notin N(w)$ and the definition of $R$, it follows that $R(w)\not\subseteq \phi^{\mathcal{O}}$, by induction hypothesis, we derive that $R(w)\not\subseteq \phi^{\mathcal{O}'}$, which means that there is a $v\in R(w)$ such that $v\notin \phi^{\mathcal{O}'}$, i.e. $\mathcal{O}',v\nvDash\phi$. Similarly, from $W\backslash\phi^{\mathcal{O}}\notin N(w)$, i.e., $(\neg\phi)^\mathcal{O}\notin N(w)$, we can obtain that there exists $u\in R(w)$ such that $\mathcal{O}',u\vDash\phi$. Thus $\mathcal{O}',w\vDash\nabla \phi$.  Therefore, $(\mathcal{O}',w)$ is the desired pointed model.
\end{proof}

By checking the above proof, we obtain a weaker result: for every formula $\phi\in\CL$, if it is satisfiable in a Kripke model, then it is satisfiable in a neighborhood model satisfying $(c)$, and vice versa.}

However, for {\bf CL}, not every quasi-filter model has a pointwise equivalent Kripke model. The point is that quasi-filter models may not be closed under {\em infinite} (i.e. arbitrary) intersections (see the property $(r)$ in Def.~\ref{def.properties}).
\begin{proposition}\label{prop.quasi-filter-infinite}
For {\bf CL}, there is a quasi-filter model that has no pointwise equivalent Kripke model.
\end{proposition}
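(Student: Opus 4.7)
The plan is to exploit a structural asymmetry between quasi-filter neighborhoods and the qf-variations of Kripke models. For any Kripke model $\M' = \lr{S,R,V}$, the family $qfN(s) = \{X \subseteq S : R(s) \subseteq X \text{ or } R(s) \cap X = \emptyset\}$ is closed under \emph{arbitrary} intersections (if every $X_i$ contains $R(s)$ then so does $\bigcap_i X_i$; otherwise some $X_{i_0}$ is disjoint from $R(s)$, hence so is $\bigcap_i X_i$), whereas Def.~\ref{def.ws} only imposes $(i)$ on quasi-filter models. It therefore suffices to exhibit a quasi-filter model $\M$ with a state $s$, a countable family of $\mathcal{L}_\Delta$-formulas $\{\phi_i\}_{i \geq 1}$, and a single $\mathcal{L}_\Delta$-formula $\psi$ satisfying (i) $\M,s \Vvdash \Delta\phi_i$ for every $i$, (ii) $\psi^\M = \bigcap_i \phi_i^\M$, and (iii) $\M,s \not\Vvdash \Delta\psi$. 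If a Kripke $\M'$ were pointwise equivalent to $\M$, then (i) and pointwise equivalence would yield $R(s) \subseteq \phi_i^{\M'_\vDash}$ or $R(s) \cap \phi_i^{\M'_\vDash} = \emptyset$ for each $i$; the closure-under-arbitrary-intersections argument above then forces $R(s) \subseteq \psi^{\M'_\vDash}$ or $R(s) \cap \psi^{\M'_\vDash} = \emptyset$ via (ii), whence $\M',s \vDash \Delta\psi$, contradicting (iii).

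For the construction, I would take $S := \mathbb{N} \cup \{\ast\}$ with $\ast \notin \mathbb{N}$, propositional variables $p_1, p_2, \ldots$ and $q$ valued by $V(p_i) := \mathbb{N} \backslash \{i\}$ and $V(q) := \{\ast\}$, and define $N(s) := \{F \subseteq \mathbb{N} : F \text{ finite}\} \cup \{S \backslash F : F \subseteq \mathbb{N} \text{ finite}\}$, with $N(x) := \{\emptyset, S\}$ for $x \neq s$. Properties $(n), (c), (i), (ws)$ at $s$ are straightforward to verify: the two subfamilies of $N(s)$ are swapped by complementation, each is closed under binary intersection, a mixed intersection $F \cap (S \backslash F')$ is still a finite subset of $\mathbb{N}$, and $(ws)$ holds because whenever $X \in N(s)$ is of the `cofinite' type then so is $X \cup Y$, while if $X$ is of the `finite' type then $S\backslash X$ is `cofinite', hence so is $(S\backslash X) \cup Z$. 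At other states these properties are trivial. Finally, set $\phi_i := p_i \lor q$ and $\psi := q$: then $\phi_i^\M = (\mathbb{N}\backslash\{i\}) \cup \{\ast\} = S \backslash \{i\}$ has finite complement $\{i\} \subseteq \mathbb{N}$, giving (i); $\psi^\M = \{\ast\}$ is neither a finite subset of $\mathbb{N}$ nor the complement in $S$ of one, giving (iii); and $\bigcap_{i \geq 1}(S \backslash \{i\}) = \{\ast\} = \psi^\M$, giving (ii).

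The main subtlety lies in (ii): since $\mathcal{L}_\Delta$ is finitary and cannot form infinite conjunctions directly, the limiting intersection of the countable family of formula-extensions must itself be the extension of a single formula, otherwise the obstruction cannot be witnessed inside the language and the contradiction with Kripke closure under arbitrary intersections does not get off the ground. The role of the auxiliary variable $q$ is precisely to make $\{\ast\}$ formula-definable, turning the infinite-intersection defect of $N(s)$ into a single failure of $\Delta q$ at $s$ that no Kripke relation can reconcile with the truth of all the $\Delta(p_i \lor q)$.
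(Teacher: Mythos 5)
Your construction is correct and takes essentially the same route as the paper: an explicit infinite quasi-filter model whose neighborhood at a point contains every (co-)singleton of a formula-definable infinite family but not their limit, so that the failure of closure under \emph{arbitrary} intersections — which any Kripke-induced neighborhood function necessarily enjoys — yields the contradiction. The paper works on $S=\mathbb{N}$ with the evens playing the role of your $\{\ast\}$ and derives the clash from a single $\Delta p_{2m}$ after extracting two successors, whereas you argue dually from infinitely many true $\Delta(p_i\lor q)$ down to $\Delta q$; the only nit is that $\bigcap_{i\geq 1}(S\setminus\{i\})=\{\ast\}$ requires the index set of the $p_i$ to exhaust all of $\mathbb{N}$, a trivial adjustment.
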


\begin{proof}
Consider an infinite model $\M=\lr{S,N,V}$, where
\begin{itemize}
\item $S=\mathbb{N}$,
\item for all $s\in S$, $N(s)=\{S,\emptyset,\{2n\text{ for some }n\in\mathbb{N}\},S\backslash\{2n\text{ for some }n\in\mathbb{N}\},\bigcap_{fin}S\backslash\{2n\text{ for some }n\in\mathbb{N}\},\bigcup_{fin}\{2n\text{ for some }n\in\mathbb{N}\}\}$,\footnote{$\bigcup_{fin}\{2n\text{ for some }n\in\mathbb{N}\}$ denotes the union of finitely many sets of the form $\{2n\text{ for some }n\in\mathbb{N}\}$, e.g. $\{0\}\cup\{2\}\cup\{4\}$.}
      %\begin{itemize}
      %\item for any even number $s$, $N(s)=\{S,\emptyset,\{2n\text{ for some }n\in\mathbb{N}\},S\backslash\{2n\text{ for some }n\in\mathbb{N}\},\bigcap_{fin}S\backslash\{2n\text{ for some }n\in\mathbb{N}\},\bigcup_{fin}\{2n\text{ for some }n\in\mathbb{N}\}\}$.
      %\item for any odd number $s$, $N(s)=\{S,\emptyset\}$.
      %\end{itemize}
\weg{\item for all $s\in S$,
      \begin{itemize}
      \item for any even number $s$, $N(s)=\{S,\emptyset,\{2n\text{ for some }n\in\mathbb{N}\},S\backslash\{2n\text{ for some }n\in\mathbb{N}\},\bigcap_{fin}S\backslash\{2n\text{ for some }n\in\mathbb{N}\},\bigcup_{fin}\{2n\text{ for some }n\in\mathbb{N}\}\}$.\footnote{$\bigcup_{fin}\{2n\text{ for some }n\in\mathbb{N}\}$ means the union of finitely many sets of the form $\{2n\text{ for some }n\in\mathbb{N}\}$, e.g. $\{0\}\cup\{2\}\cup\{4\}$.}
      \item for any odd number $s$, $N(s)=\{S,\emptyset\}$.
      \end{itemize}}
\item $V(p)=\{2n\mid n\in\mathbb{N}\}$, $V(p_m)=\{m\}$ for all $m\in\mathbb{N}$.
\end{itemize}
It is not hard to check that $\M$ is a quasi-filter model.\footnote{To verify $(ws)$, we need only show the nontrivial case $\bigcup_{fin}\{2n\text{ for some }n\in\mathbb{N}\}$. For this, we show a stronger result: for all $Z\subseteq S$, $\bigcap_{fin}S\backslash\{2n\text{ for some }n\in\mathbb{N}\}\cup Z\in N(s)$. The cases for $Z=S$ or $Z=\emptyset$ are clear. For other cases, we partition the elements in $Z$ into three disjoint (possibly empty) parts: odd numbers, even numbers in $\bigcup_{fin}\{2n\text{ for some }n\in\mathbb{N}\}$, even numbers in $\bigcap_{fin}S\backslash\{2n\text{ for some }n\in\mathbb{N}\}$. Note that the first and third parts all belong to $\bigcap_{fin}S\backslash\{2n\text{ for some }n\in\mathbb{N}\}$; moreover, the union of the second part and $\bigcap_{fin}S\backslash\{2n\text{ for some }n\in\mathbb{N}\}$ is also in $N(s)$.} Note that for all $s\in S$, $p^\M\notin N(s)$, thus $\M,s\not\Vvdash \Delta p$. In particular, $\M,0\not\Vvdash\Delta p$.

Suppose that there is a pointwise equivalent Kripke model $\M'$, then $\M',0\nvDash\Delta p$. Thus there must be $2m$ and $2n+1$ that are accessible from $0$, where $m,n\in\mathbb{N}$. Since $p_{2m}^{\M'}=p_{2m}^{\M}=\{2m\}$, thus $\M',0\nvDash\Delta p_{2m}$.

However, since $p_{2m}^\M=\{2m\}\in N(0)$, we obtain $\M,0\Vvdash\Delta p_{2m}$, which is contrary to the supposition and $\M',0\nvDash\Delta p_{2m}$, as desired.
\end{proof}

\weg{\begin{proposition}\label{prop.quasi-Krip}
For every quasi-filter model $\M$, there exists a pointwise equivalent Kripke model $\M'$, that is, for all $\phi\in\mathcal{L}_\Delta$, for all worlds $s$,  $\M',s\vDash\phi\iff\M,s\Vvdash\phi$, i.e., $\phi^{\M'_\vDash}=\phi^\M$.
\end{proposition}

\begin{proof}
Let $\M=\lr{S,N,V}$ be a quasi-filter model. Define $\M'=\lr{S,R,V}$, where $R$ is defined as follows: for any $s,t\in S$,
$$sRt\iff t\in X\text{ for some }X\in N(s)\text{ and }\{t\}\notin N(s).$$
We will show that for all $\phi\in\mathcal{L}_\Delta$ and all $s\in S$, we have that $$\M',s\vDash\phi\iff \M,s\Vvdash\phi.$$

The proof proceeds with induction on $\phi\in\mathcal{L}_\Delta$. The nontrivial case is $\Delta\phi$, that is to show, $\M',s\vDash\Delta\phi\iff\M,s\Vvdash\Delta\phi$.

``$\Longleftarrow$:'' Suppose, for a contradiction, that $\M,s\Vvdash\Delta\phi$, but $\M',s\nvDash\Delta\phi$. Then $\phi^\M\in N(s)$, and there are $t,u\in S$ such that $sRt$ and $sRu$ and $\M',t\vDash\phi$ and $\M',u\nvDash\phi$. Since $\phi^\M\in N(s)$, by $(c)$, we get $S\backslash\phi^\M\in N(s)$; moreover, by $(ws)$, we obtain that $\phi^\M\cup\{u\}\in N(s)$ or $S\backslash\phi^\M\cup \{t\}\in N(s)$. If $\phi^\M\cup\{u\}\in N(s)$, then by $S\backslash \phi^\M\in N(s)$ and $(i)$, we derive that $(\phi^\M\cup\{u\})\cap S\backslash\phi^\M\in N(s)$, i.e., $\{u\}\cap S\backslash\phi^\M\in N(s)$, by induction hypothesis, $\{u\}=\{u\}\cap S\backslash\phi^{\M'_\vDash}\in N(s)$, contrary to $sRu$ and the definition of $R$. If $S\backslash\phi^\M\cup \{t\}\in N(s)$, similarly we can show that $\{t\}\in N(s)$, contrary to $sRt$ and the definition of $R$.

``$\Longrightarrow$:'' Suppose that $\M,s\not\Vvdash\Delta\phi$, to show that $\M',s\nvDash\Delta\phi$, that is, there are $t,u\in S$ such that $sRt,sRu$ and $\M',t\vDash\phi$ and $\M',u\vDash\neg\phi$. By supposition, $\phi^\M\notin N(s)$. By $(n)$ and $(c)$, $S\in N(s)$ and $\emptyset\in N(s)$. Thus $\phi^\M\neq S$ and $\phi^\M\neq \emptyset$. Using induction hypothesis, we obtain $\phi^{\M'_\vDash}\neq S$ and $\phi^{\M'_\vDash}\neq \emptyset$. This means that there are $t',u'\in S$ such that $\M',t'\vDash\phi$ and $\M',u'\vDash\neg\phi$.

Now consider the truth set of $\phi$ in $\M'$, i.e., $\phi^{\M'_\vDash}=\{x\in S\mid \M',x\vDash\phi\}$. We show that there is a $t\in \phi^{\M'_\vDash}$ such that $\{t\}\notin N(s)$ as follows: if not, i.e., for all $t\in \phi^{\M'_\vDash}$ we have $\{t\}\in N(s)$, then by $(c)$, we get $S\backslash \{t\}\in N(s)$, and using $(i)$ we obtain $\bigcap_{t\in \phi^{\M'_\vDash}}S\backslash \{t\}\in N(s)$. Then by induction hypothesis, we infer $\bigcap_{t\in \phi^{\M}}S\backslash \{t\}\in N(s)$. i.e., $S\backslash \phi^\M\in N(s)$. Hence using $(c)$ again, we conclude that $\phi^\M\in N(s)$, contradiction.

Therefore, there is a $t\in \phi^{\M'_\vDash}$ such that $\{t\}\notin N(s)$. Since $t\in S$ and $S\in N(s)$, by the definition of $R$, it follows that $sRt$; furthermore, since $t\in \phi^{\M'_\vDash}$, $\M',t\vDash\phi$.

Similarly, we can show that there is a $u\in (\neg\phi)^{\M'_\vDash}$ such that $\{u\}\notin N(s)$. Thus $sRu$ and $\M',u\vDash\neg\phi$, as desired.
\end{proof}}

%We conjecture the converse also holds, but we cannot show it in the current setting, since the proof seems to make use of the property ``being closed under arbitrary intersection''. We leave it for future work.

%In spite of this surprising result, as we shall see in Coro.~\ref{coro.coincidelogicrel}, logical consequence relations over Kripke semantics and new neighborhood semantics on quasi-filter models coincide with each other: for all $\Gamma\cup\{\phi\}\subseteq \mathcal{L}_\Delta$, $$\Gamma\vDash\phi\iff \Gamma\Vvdash_q\phi,$$ where, by $\Gamma\Vvdash_q\phi$ we mean that, for every quasi-filter model $\M$ and $s$ in $\M$, if $\M,s\Vvdash \Gamma$, then $\M,s\Vvdash\phi$.

%Therefore, logical consequence relations over Kripke semantics and new neighborhood semantics on quasi-filter models coincide with each other: for all $\Gamma\cup\{\phi\}\subseteq \mathcal{L}_\Delta$, $$\Gamma\vDash\phi\iff \Gamma\Vvdash_q\phi,$$ where, by $\Gamma\Vvdash_q\phi$ we mean that, for every quasi-filter model $\M$ and $s$ in $\M$, if $\M,s\Vvdash \Gamma$, then $\M,s\Vvdash\phi$.
However, when we restrict quasi-filter models to finite cases, the situation will be different.

\begin{proposition}\label{prop.quasi-Krip}
For every finite quasi-filter model $\M$, there exists a pointwise equivalent Kripke model $\M'$, that is, for all $\phi\in\mathcal{L}_\Delta$, for all worlds $s$,  $\M',s\vDash\phi\iff\M,s\Vvdash\phi$, i.e., $\phi^{\M'_\vDash}=\phi^\M$.
\end{proposition}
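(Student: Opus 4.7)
The plan is to mimic the Kripke-from-neighborhood construction sketched in the commented-out definition: given a finite quasi-filter model $\M = \lr{S,N,V}$, define $\M' = \lr{S,R,V}$ with $sRt$ iff $\{t\} \notin N(s)$ (equivalently, $t$ lies in some $X \in N(s)$ --- automatic since $(n)$ gives $S \in N(s)$ --- while $\{t\}$ itself is not in $N(s)$). I would then prove by induction on $\phi \in \mathcal{L}_\Delta$ that $\M,s \Vvdash \phi \iff \M',s \vDash \phi$. Atomic and Boolean cases are routine, so the focus is on $\Delta\phi$. Two preliminary consequences of the quasi-filter axioms will be used repeatedly: by $(n)$ and $(c)$, both $S$ and $\emptyset$ belong to $N(s)$; and combining $(c)$ with $(i)$ shows $N(s)$ is closed under finite unions, since $X \cup Y = S\backslash((S\backslash X) \cap (S\backslash Y))$.

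For the direction ``$\M,s \Vvdash \Delta\phi \Rightarrow \M',s \vDash \Delta\phi$'', I would assume $\phi^\M \in N(s)$ and argue by contradiction. If there were $t,u \in R(s)$ with $t \in \phi^\M$ and $u \notin \phi^\M$, then applying $(ws)$ to $X = \phi^\M$ with $Y = \{u\}$ and $Z = \{t\}$ yields either $\phi^\M \cup \{u\} \in N(s)$ or $(S\backslash\phi^\M) \cup \{t\} \in N(s)$. In the first case, intersecting with $S\backslash\phi^\M \in N(s)$ (obtained from $(c)$) via $(i)$ gives $\{u\} \in N(s)$, contradicting $u \in R(s)$; in the second, intersecting with $\phi^\M$ yields $\{t\} \in N(s)$, equally contradictory. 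Invoking the induction hypothesis $\phi^\M = \phi^{\M'_\vDash}$ then delivers $\M',s \vDash \Delta\phi$.

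For the converse, I would assume $\phi^\M \notin N(s)$ and produce a pair of $R$-successors disagreeing on $\phi$. Since $\emptyset, S \in N(s)$, both $\phi^\M$ and $S\backslash\phi^\M$ are nonempty. Here finiteness enters: if every $t \in \phi^\M$ satisfied $\{t\} \in N(s)$, then the finite union $\bigcup_{t \in \phi^\M}\{t\} = \phi^\M$ would itself lie in $N(s)$ by closure under finite unions --- a contradiction. Hence some $t \in \phi^\M$ has $\{t\} \notin N(s)$, i.e.\ $sRt$; symmetrically one obtains $u \in S\backslash\phi^\M$ with $sRu$. By the induction hypothesis $\M',t \vDash \phi$ and $\M',u \nvDash \phi$, so $\M',s \nvDash \Delta\phi$, as required.

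The main obstacle I anticipate is precisely this use of finiteness in the converse: the argument collapses the set $\phi^\M$ into a single member of $N(s)$ by taking a union of singletons, which is only a legitimate move inside $N(s)$ when $\phi^\M$ is finite. This is exactly the limitation made explicit by Proposition~\ref{prop.quasi-filter-infinite} --- quasi-filter models need not be closed under infinite intersections/unions --- so pushing the construction past the finite setting would require strengthening the notion of quasi-filter model.
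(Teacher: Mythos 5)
Your proof is correct and follows essentially the same route as the paper's: the same accessibility relation $sRt$ iff $\{t\}\notin N(s)$ (the paper adds the clause ``$t\in X$ for some $X\in N(s)$'', which, as you note, is vacuous given $(n)$), the same use of $(ws)$ plus $(c)$ and $(i)$ to extract the offending singleton in one direction, and the same finiteness argument in the other (you phrase it as closure under finite unions, the paper as finite intersections of complements --- De Morgan duals of the same step). Your closing remark about where finiteness is indispensable matches the paper's own footnote on this point.
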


\begin{proof}
Let $\M=\lr{S,N,V}$ be a quasi-filter model. Define $\M'=\lr{S,R,V}$, where $R$ is defined as follows: for any $s,t\in S$,
$$sRt\iff t\in X\text{ for some }X\in N(s)\text{ and }\{t\}\notin N(s).$$
We will show that for all $\phi\in\mathcal{L}_\Delta$ and all $s\in S$, we have that $$\M',s\vDash\phi\iff \M,s\Vvdash\phi.$$

The proof proceeds with induction on $\phi\in\mathcal{L}_\Delta$. The nontrivial case is $\Delta\phi$, that is to show, $\M',s\vDash\Delta\phi\iff\M,s\Vvdash\Delta\phi$.

``$\Longleftarrow$:'' Suppose, for a contradiction, that $\M,s\Vvdash\Delta\phi$, but $\M',s\nvDash\Delta\phi$. Then $\phi^\M\in N(s)$, and there are $t,u\in S$ such that $sRt$ and $sRu$ and $\M',t\vDash\phi$ and $\M',u\nvDash\phi$. Since $\phi^\M\in N(s)$, by $(c)$, we get $S\backslash\phi^\M\in N(s)$; moreover, by $(ws)$, we obtain that $\phi^\M\cup\{u\}\in N(s)$ or $S\backslash\phi^\M\cup \{t\}\in N(s)$. If $\phi^\M\cup\{u\}\in N(s)$, then by $S\backslash \phi^\M\in N(s)$ and $(i)$, we derive that $(\phi^\M\cup\{u\})\cap S\backslash\phi^\M\in N(s)$, i.e., $\{u\}\cap S\backslash\phi^\M\in N(s)$, by induction hypothesis, $\{u\}=\{u\}\cap S\backslash\phi^{\M'_\vDash}\in N(s)$, contrary to $sRu$ and the definition of $R$. If $S\backslash\phi^\M\cup \{t\}\in N(s)$, similarly we can show that $\{t\}\in N(s)$, contrary to $sRt$ and the definition of $R$.

``$\Longrightarrow$:'' Suppose that $\M,s\not\Vvdash\Delta\phi$, to show that $\M',s\nvDash\Delta\phi$, that is, there are $t,u\in S$ such that $sRt,sRu$ and $\M',t\vDash\phi$ and $\M',u\vDash\neg\phi$. By supposition, $\phi^\M\notin N(s)$. By $(n)$ and $(c)$, $S\in N(s)$ and $\emptyset\in N(s)$.% Thus $\phi^\M\neq S$ and $\phi^\M\neq \emptyset$. Using induction hypothesis, we obtain $\phi^{\M'_\vDash}\neq S$ and $\phi^{\M'_\vDash}\neq \emptyset$. This means that there are $t',u'\in S$ such that $\M',t'\vDash\phi$ and $\M',u'\vDash\neg\phi$.

Now consider the truth set of $\phi$ in $\M$, namely, $\phi^{\M_\vDash}=\{x\in S\mid \M,x\vDash\phi\}$. Clearly, $\phi^{\M_\vDash}\neq S$ and $\phi^{\M_\vDash}\neq \emptyset$. We show that there is a $t\in \phi^{\M_\vDash}$ such that $\{t\}\notin N(s)$ as follows: if not, i.e., for all $t\in \phi^{\M_\vDash}$ we have $\{t\}\in N(s)$, then by $(c)$, we get $S\backslash \{t\}\in N(s)$, and using $(i)$ we obtain $\bigcap_{t\in \phi^{\M_\vDash}}S\backslash \{t\}\in N(s)$, viz. $S\backslash \phi^{\M_\vDash}\in N(s)$.\footnote{Since $\M$ is finite, we need only use the property that $N$ is closed under finite intersections, which is equivalent to the property $(i)$. This is unlike the case in Prop.~\ref{prop.quasi-filter-infinite}.} Therefore using $(c)$ again, we conclude that $\phi^{\M_\vDash}\in N(s)$, which contradicts with the supposition and induction hypothesis.
%Now consider the truth set of $\phi$ in $\M'$, i.e., $\phi^{\M'_\vDash}=\{x\in S\mid \M',x\vDash\phi\}$. We show that there is a $t\in \phi^{\M'_\vDash}$ such that $\{t\}\notin N(s)$ as follows: if not, i.e., for all $t\in \phi^{\M'_\vDash}$ we have $\{t\}\in N(s)$, then by $(c)$, we get $S\backslash \{t\}\in N(s)$, and using $(i)$ we obtain $\bigcap_{t\in \phi^{\M'_\vDash}}S\backslash \{t\}\in N(s)$. Then by induction hypothesis, we infer $\bigcap_{t\in \phi^{\M}}S\backslash \{t\}\in N(s)$. i.e., $S\backslash \phi^\M\in N(s)$. Hence using $(c)$ again, we conclude that $\phi^\M\in N(s)$, contradiction.

Therefore, there is a $t\in \phi^{\M_\vDash}$ such that $\{t\}\notin N(s)$. Since $t\in S$ and $S\in N(s)$, by the definition of $R$, it follows that $sRt$; furthermore, from $t\in \phi^{\M_\vDash}$ and induction hypothesis, it follows that $\M',t\vDash\phi$.
%Therefore, there is a $t\in \phi^{\M'_\vDash}$ such that $\{t\}\notin N(s)$. Since $t\in S$ and $S\in N(s)$, by the definition of $R$, it follows that $sRt$; furthermore, since $t\in \phi^{\M'_\vDash}$, $\M',t\vDash\phi$.

Similarly, we can show that there is a $u\in (\neg\phi)^{\M'_\vDash}$ such that $\{u\}\notin N(s)$. Thus $sRu$ and $\M',u\vDash\neg\phi$, as desired.
\end{proof}

In spite of Prop.~\ref{prop.quasi-filter-infinite}, as we shall see in Coro.~\ref{coro.coincidelogicrel}, logical consequence relations over Kripke semantics and over the new neighborhood semantics on quasi-filter models coincide with each other for $\CL$.

\weg{\begin{corollary}\label{coro.coincidelogicrel}
The logical consequence relations $\Vvdash_q$ and $\vDash$ coincide for $\CL$. That is, for all $\Gamma\cup\{\phi\}\subseteq \mathcal{L}_\Delta$,
$\Gamma\Vvdash_q\phi\iff \Gamma\vDash \phi,$
where, by $\Gamma\Vvdash_q\phi$ we mean that, for every quasi-filter model $\M$ and $s$ in $\M$, if $\M,s\Vvdash \Gamma$, then $\M,s\Vvdash\phi$. Therefore, for all $\phi\in \mathcal{L}_\Delta$, $\Vvdash_q\phi\iff\vDash \phi$, i.e., the new semantics over quasi-filter models has the same logic (valid formulas) on {\bf CL} as the Kripke semantics.
\end{corollary}}

\weg{However, the converse does not hold: this is because quasi-filter model is not closed under supersets, thus is not augmented, but there is a one-to-one correspondence between augmented neighborhood models and Kripke models for $\CL$.

This will help us to find the axiomatization for $\CL$ on the class of $i$-frames under the original neighborhood semantics.}

\section{$qf$-Bisimulation}

This section proposes the notion of bisimulation for {\bf CL} over quasi-filter models, called `$qf$-bisimulation'. The intuitive idea of the notion is similar to monotonic $c$-bisimulation and $c$-bisimulation, i.e. the notion of precocongruences with particular properties (in the current setting, those four properties of quasi-filter models).

\begin{definition}[qf-bisimulation]
Let $\M=\lr{S,N,V}$ and $\M'=\lr{S',N',V'}$ be quasi-filter models. A nonempty relation $Z\subseteq S\times S'$ is a {\em qf-bisimulation} between $\M$ and $\M'$, if for all $(s,s')\in Z$,

\textbf{(qi)} $s\in V(p)$ iff $s'\in V'(p)$ for all $p\in\BP$;

\textbf{(qii)} if the pair $(U,U')$ is $Z$-coherent, then
$U\in N(s)\text{ iff }U'\in N'(s').$

We say $(\M,s)$ and $(\M',s')$ are {\em qf-bisimilar}, written $(\M,s)\bis_{qf}(\M',s')$, if there is a qf-bisimulation $Z$ between $\M$ and $\M'$ such that $(s,s')\in Z$.
\end{definition}

Note that the notion of $qf$-bisimulation is defined between quasi-filter models. It is clear that every qf-bisimulation is a $c$-bisimulation, but it is not necessarily a monotonic $c$-bisimulation, since it is easy to find a quasi-filter model which is not closed under supersets.\weg{since as illustrated by the following example, quasi-filter models may not be closed under supersets.

\begin{example}\label{example.not-superset}
The following $\mathcal{F}$ is a quasi-filter frame, but $\mathcal{F}$ is not closed under supersets, since for instance, $\emptyset\in N(s)$ but $\{s\}\notin N(s)$. (Here we use an arrow from a world $s$ to a set $X$ to mean that $X\in N(s)$.)
$$
\xymatrix{\{s,t\}&\emptyset\\
s\ar[u]\ar[ur]&t\ar[ul]\ar[u]\\}
$$
\end{example}}

Analogous to the case for $c$-bisimulation in Sec.~\ref{sec.c-bis}, we can show that
\begin{proposition}\label{prop.invariance-qf-bis}
Let $\M$, $\M'$ be both quasi-filter models, $s\in \M$, $s'\in\M'$. If $(\M,s)\bis_{qf}(\M',s')$, then for all $\phi\in\mathcal{L}_\Delta$,
$\M,s\Vvdash\phi\iff\M',s'\Vvdash\phi.$
\end{proposition}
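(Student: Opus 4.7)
The plan is to mimic the proof of Prop.~\ref{prop.invariance-c-bis} almost verbatim, since the definition of $qf$-bisimulation differs from $c$-bisimulation only in the class of models on which it is defined; the two defining clauses (qi), (qii) are literally the same as (i), (ii) for $c$-bisimulation. Fix a $qf$-bisimulation $Z$ between $\M$ and $\M'$ with $(s,s')\in Z$, and proceed by induction on $\phi\in\mathcal{L}_\Delta$. The cases for propositional variables, negation and conjunction are routine: (qi) gives the base case, and the Boolean cases follow from the induction hypothesis in the standard way.

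For the only nontrivial case, $\Delta\phi$, I would argue
\[
\begin{array}{ll}
 & \M,s\Vvdash\Delta\phi\\
\iff & \phi^\M\in N(s)\\
\stackrel{(\ast)}\iff & \phi^{\M'}\in N'(s')\\
\iff & \M',s'\Vvdash\Delta\phi,
\end{array}
\]
where $(\ast)$ is obtained by applying (qii) to the pair $(\phi^\M,\phi^{\M'})$. To invoke (qii) I need to check that this pair is $\bis_{qf}$-coherent, i.e., that for every $(x,x')\in Z$ we have $x\in\phi^\M$ iff $x'\in\phi^{\M'}$. But that is exactly what the induction hypothesis delivers: if $(x,x')\in Z$, then $(\M,x)\bis_{qf}(\M',x')$ (witnessed by the same $Z$), so by IH $\M,x\Vvdash\phi$ iff $\M',x'\Vvdash\phi$, which is just $x\in\phi^\M$ iff $x'\in\phi^{\M'}$.

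No step should present a real obstacle, since the quasi-filter properties $(n)$, $(i)$, $(c)$, $(ws)$ are not used anywhere in this argument; they only matter for the definition of the ambient model class and for results such as Hennessy-Milner style converses. The entire proof therefore reduces to checking the single coherence condition, exactly as in Prop.~\ref{prop.invariance-c-bis}, so the proof will be short and I can even state it as ``analogous to the proof of Prop.~\ref{prop.invariance-c-bis}, using (qii) in place of (ii).''
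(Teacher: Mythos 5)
Your proof is correct and is exactly the argument the paper intends: the paper omits the proof, stating only that it is ``analogous to the case for $c$-bisimulation,'' and your induction with the $Z$-coherence of $(\phi^\M,\phi^{\M'})$ feeding into (qii) is precisely that analogue. Your observation that the quasi-filter properties play no role in the invariance direction is also accurate.
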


\begin{theorem}[Hennessy-Milner Theorem for qf-bisimulation]
Let $\M$ and $\M'$ be $\Delta$-saturated quasi-filter models, and $s\in\M$, $s'\in\M'$. If for all $\phi\in\mathcal{L}_\Delta$, $\M,s\Vvdash \phi\iff\M',s'\Vvdash\phi$, then $(\M,s)\bis_{qf}(\M',s')$.
\end{theorem}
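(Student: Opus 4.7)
The plan is to reuse the Hennessy-Milner Theorem for $c$-bisimulation established earlier, exploiting the fact that quasi-filter models are by definition $c$-models. First I would observe that since every quasi-filter model has the property $(c)$ (cf.\ Def.~\ref{def.ws}), each $\Delta$-saturated quasi-filter model is in particular a $\Delta$-saturated $c$-model; the notion of $\Delta$-saturation depends only on the neighborhood function and on $\mathcal{L}_\Delta$-satisfiability, so no extra verification is needed. Hence the hypothesis $\M,s\equiv_{\mathcal{L}_\Delta}\M',s'$ puts us squarely in the situation covered by the Hennessy-Milner Theorem for $c$-bisimulation, which yields a $c$-bisimulation $Z\subseteq S\times S'$ between $\M$ and $\M'$ containing $(s,s')$.

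The second, and essentially trivial, step is to note that the defining clauses of $c$-bisimulation and $qf$-bisimulation are literally identical: clause (i) matches (qi), and clause (ii) matches (qii). The only difference between the two notions lies in the class of admissible models ($c$-models for the former, quasi-filter models for the latter). Since by assumption $\M$ and $\M'$ are already quasi-filter models, the $c$-bisimulation $Z$ produced in the previous step qualifies, without further modification, as a $qf$-bisimulation between $\M$ and $\M'$. Therefore $(\M,s)\bis_{qf}(\M',s')$, as required.

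No real obstacle is anticipated; this theorem is essentially a specialisation of the $c$-bisimulation case to a narrower model class, mirroring the way the monotonic $c$-bisimulation Hennessy-Milner Theorem was obtained. If one preferred to keep the argument self-contained, the alternative route would be to mimic the proof strategy used in~\cite{Bakhtiarietal:2017}: define $Z$ as $\mathcal{L}_\Delta$-equivalence, and verify clause (qii) directly by, given a $Z$-coherent pair $(U,U')$ with $U\in N(s)$, using $\Delta$-saturation of $\M'$ together with the $(c)$ property to exhibit a witness $U'\in N'(s')$. But invoking Thm.~\ref{thm.hm-nbh-delta-bis} via the already-established equivalence of the two bisimulation notions on $c$-models (Coro.~\ref{coro.c-bis-equi-nbh-bis}) avoids this redundancy entirely.
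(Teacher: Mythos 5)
Your proposal is correct and takes essentially the same route as the paper, which omits the proof entirely with the remark that it is ``analogous to the case for $c$-bisimulation'': since quasi-filter models possess $(c)$, a $\Delta$-saturated quasi-filter model is a $\Delta$-saturated $c$-model, and the clauses of $qf$-bisimulation are verbatim those of $c$-bisimulation, so the Hennessy-Milner Theorem for $c$-bisimulation immediately yields the result. Your shortcut of invoking that theorem directly, rather than re-running its reduction through nbh-$\Delta$-bisimulation, is a perfectly sound (and slightly cleaner) way to make the paper's ``analogous'' precise.
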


We conclude this section with a comparison between the notion of qf-bisimulation and that of rel-$\Delta$-bisimulation in~\cite[Def.~6]{Bakhtiarietal:2017}.

\begin{definition}[rel-$\Delta$-bisimulation]
Let $\M=\lr{S,R,V}$ and $\M'=\lr{S',R',V'}$ be Kripke models. A nonempty relation $Z\subseteq S\times S'$ is a {\em rel-$\Delta$-bisimulation} between $\M$ and $\M'$, if for all $(s,s')\in Z$,

\textbf{(Atoms)} $s\in V(p)$ iff $s'\in V'(p)$ for all $p\in\BP$;

\textbf{(Coherence)} if the pair $(U,U')$ is $Z$-coherent, then
$$(R(s)\subseteq U\text{ or }R(s)\subseteq S\backslash U)\text{ iff }(R'(s')\subseteq U'\text{ or }R'(s')\subseteq S'\backslash U').$$
We say $(\M,s)$ and $(\M',s')$ are {\em rel-$\Delta$-bisimilar}, written $(\M,s)\bis_{rel}(\M',s')$, if there is a rel-$\Delta$-bisimulation $Z$ between $\M$ and $\M'$ such that $(s,s')\in Z$.
\end{definition}

%Since rel-$\Delta$-bisimilarity is equal to $\Delta$-bisimilarity~\cite[Prop.~3.2]{Bakhtiarietal:2017}, and van Benthem Characterization Theorem holds for $\Delta$-bisimulation~\cite[Sec.~5]{Fanetal:2014}, we thus also have van Benthem Characterization Theorem for rel-$\Delta$-bisimulation.

%Although every qf-bisimulation is a $c$-bisimulation, not every $c$-bisimulation is a qf-bisimulation. Therefore, we cannot obtain van Benthem Characterization Theorem for qf-bisimulation from that for $c$-bisimulation. Instead, we use van Benthem Characterization Theorem for rel-$\Delta$-bisimulation. First, we show that every Kripke model is pointwise equivalent to its qf-variation, and also every rel-$\Delta$-bisimulation between Kripke models can be transformed into a qf-bisimulation between quasi-filter models.

\weg{\begin{definition}[qf-variation]\label{def.qf-variation}
Let $\M=\lr{S,R,V}$ is a Kripke model. $qf(\M)$ is said to be a {\em qf-variation} of $\M$, if $qf(\M)=\lr{S,qfN,V}$, where for any $s\in S$, $qfN(s)=\{X\subseteq S: \text{ for any }t,u\in S,\text{ if }sRt\text{ and }sRu,\text{ then }(t\in X\text{ iff }u\in X)\}$.
\end{definition}

The definition of $qfN$ is quite natural, since just as ``for any $t,u\in S$, if $sRt$ and $sRu$, then ($t\in X$ iff $u\in X$)'' corresponds to the Kripke semantics of $\Delta$, $X\in qfN(s)$ corresponds to the new neighborhood semantics of the operator, as will be seen more clearly in Prop.~\ref{prop.rel-equiv-qf}. Note that the definition of $qfN$ can be simplified as follows: $$qfN(s)=\{X\subseteq S:R(s)\subseteq X\text{~or~}R(s)\subseteq S\backslash X\}.$$

It is easy to see that every Kripke model has a (sole) qf-variation. We will demonstrate that, every such qf-variation is a quasi-filter model.

The following proposition states that every Kripke model its qf-variation are pointwise equivalent.
\begin{proposition}\label{prop.rel-equiv-qf}
For all Kripke model $\M=\lr{S,R,V}$ and $s\in S$, for all $\phi\in\mathcal{L}_\Delta$, we have
$$\M,s\vDash\phi\iff qf(\M),s\Vvdash\phi.$$
\end{proposition}

\begin{proof}
By induction on $\phi$. The nontrivial case is $\Delta\phi$.
\[
\begin{array}{lll}
\M,s\vDash\Delta\phi&\iff&\text{for all }t,u\in S, \text{ if }sRt\text{ and }sRu,\text{ then }(t\in\phi^{\M_\vDash}\iff u\in\phi^{\M_\vDash})\\
&\stackrel{\text{IH}}\iff &\text{for all }t,u\in S, \text{ if }sRt\text{ and }sRu,\text{ then }(t\in\phi^{\M}\iff u\in\phi^{\M})\\
&\iff&\phi^\M\in qfN(s)\\
&\iff&qf(\M),s\Vvdash\Delta\phi.\\
\end{array}
\]
\end{proof}

\begin{proposition}\label{prop.qf-model}
Let $\M$ be a Kripke model. Then its qf-variation $qf(\M)$ is a quasi-filter model.
\end{proposition}

\begin{proof}
Let $\M=\lr{S,R,V}$. For any $s\in S$, we show that $qf(\M)$ satisfies those four properties of quasi-filter models.
\begin{itemize}
\item $(n)$: it is clear that $S\in qfN(s)$.
\item $(i)$: assume that $X,Y\in qfN(s)$, we show $X\cap Y\in qfN(s)$. By assumption, for all $s,t\in S$, if $sRt$ and $sRu$, then $t\in X$ iff $u\in X$, and for all $s,t\in S$, if $sRt$ and $sRu$, then $t\in Y$ iff $u\in Y$. Therefore, for all $t,u\in S$, if $sRt$ and $sRu$, we have that $t\in X\cap Y$ iff $u\in X \cap Y$. This entails $X\cap Y\in qfN(s)$.
\item $(c)$: assume that $X\in qfN(s)$, to show $S\backslash X\in qfN(s)$. By assumption, for all $s,t\in S$, if $sRt$ and $sRu$, then $t\in X$ iff $u\in X$. Thus for all $s,t\in S$, if $sRt$ and $sRu$, then $t\in S\backslash X$ iff $u\in\backslash X$, i.e., $S\backslash X\in qfN(s)$.
\item $(ws)$: assume for a contradiction that for some $X \in qfN(s)$ and some $Y,Z\subseteq S$ it holds that $X \cup Y \notin qfN(s)$ and $(S\backslash X) \cup Z \notin qfN(s)$. W.l.o.g. we assume that there are $t_1,u_1$ such that $sRt_1$, $sRu_1$ and $t_1\in X\cup Y$ but $u_1\notin X\cup Y$, and there are $t_2, u_2$ such that $sRt_2$, $sRu_2$ and $t_2 \notin (S\backslash X)\cup Z$ but $u_2\in  (S\backslash X)\cup Z$. Then $t_2\in X$ and $u_1\notin X$, which  is contrary to the fact that $X\in qfN(s)$ and $sRu_1,sRt_2$, as desired.
\end{itemize}
\end{proof}

Props.~\ref{prop.rel-equiv-qf} and \ref{prop.qf-model} provide an alternative proof for Coro.~\ref{coro.Krip-quasi}.}

%\medskip

The result below asserts that every rel-$\Delta$-bisimulation between Kripke models can be transformed as a qf-bisimulation between quasi-filter models.

\begin{proposition}\label{prop.rel-qf}
Let $\M=\lr{S,R,V}$ and $\M'=\lr{S',R',V'}$ be Kripke models.
If $Z$ is a rel-$\Delta$-bisimulation between $\M$ and $\M'$, then $Z$ is a qf-bisimulation between $qf(\M)$ and $qf(\M')$.% Therefore, if $(\M,s)\bis_{rel}(\M',s')$, then $(qf(\M),s)\bis_{qf}(qf(\M'),s')$.
\end{proposition}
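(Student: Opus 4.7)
The plan is to verify directly the two defining conditions (qi) and (qii) of qf-bisimulation for the relation $Z$ viewed between $qf(\M)$ and $qf(\M')$. First I would observe that by Prop.~\ref{prop.qf-model}, both $qf(\M)$ and $qf(\M')$ are indeed quasi-filter models, so $Z$ is at least a candidate for being a qf-bisimulation between quasi-filter models (the definition requires this typing). I would then fix an arbitrary pair $(s,s')\in Z$ and check the two clauses in turn.

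For (qi), since $qf(\M)=\lr{S,qfN,V}$ and $qf(\M')=\lr{S',qfN',V'}$ share domain and valuation with $\M$ and $\M'$ respectively, the atomic harmony of $s$ and $s'$ in the qf-variations is literally the \textbf{(Atoms)} clause of the rel-$\Delta$-bisimulation hypothesis on $Z$. So this step is immediate.

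For (qii), I would unfold the simplified description of $qfN$ recorded right after Def.~\ref{def.qf-variation}, namely $X\in qfN(s)$ iff $R(s)\subseteq X$ or $R(s)\subseteq S\backslash X$ (and similarly for $qfN'$). Given an arbitrary $Z$-coherent pair $(U,U')$, I then have the chain
\[
U\in qfN(s) \iff R(s)\subseteq U \text{ or } R(s)\subseteq S\backslash U \iff R'(s')\subseteq U' \text{ or } R'(s')\subseteq S'\backslash U' \iff U'\in qfN'(s'),
\]
where the middle equivalence is exactly the \textbf{(Coherence)} clause of rel-$\Delta$-bisimulation applied to the $Z$-coherent pair $(U,U')$.

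There is no real obstacle here: the content of the proposition is that the definition of $qfN$ has been set up precisely so as to internalize the disjunction ``$R(s)\subseteq U$ or $R(s)\subseteq S\backslash U$'' as membership in a neighborhood collection. Once this is noticed, the verification is pure definition-chasing. The only mild care needed is to state the argument using the simplified form of $qfN$ (rather than the original ``for all $t,u$ with $sRt$ and $sRu$, $t\in X$ iff $u\in X$'' formulation), since the match with the Coherence clause of rel-$\Delta$-bisimulation is then perfectly transparent.
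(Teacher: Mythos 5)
Your proposal is correct and follows essentially the same route as the paper's own proof: appeal to Prop.~\ref{prop.qf-model} for the typing, read off \textbf{(qi)} from \textbf{(Atoms)}, and for \textbf{(qii)} chain the simplified characterization $U\in qfN(s)\iff R(s)\subseteq U$ or $R(s)\subseteq S\backslash U$ through the \textbf{(Coherence)} clause. Nothing is missing.
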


\begin{proof}
Suppose $Z$ is a rel-$\Delta$-bisimulation between $\M$ and $\M'$. By Prop.~\ref{prop.qf-model}, $qf(\M)$~and $qf(\M')$ are both quasi-filter models.
It suffices to show that $Z$ satisfies the two conditions of a qf-bisimulation. For this, assume that $(s,s')\in Z$. \textbf{(qi)} is clear from \textbf{(Atoms)}.

For \textbf{(qii)}: let $(U,U')$ be $Z$-coherent. We have the following line of argumentation: $U\in qfN(s)$ iff (by definition of $qfN$) $(R(s)\subseteq U\text{ or }R(s)\subseteq S\backslash U)$ iff (by \textbf{Coherence}) $(R'(s')\subseteq U'\text{ or }R'(s')\subseteq S'\backslash U')$ iff  (by definition of $qfN'$) $U'\in qfN'(s')$.
\end{proof}

\weg{With the previous preparations in hand, we can now show van Benthem Characterization Theorem for qf-bisimulation.

\begin{theorem}\label{thm.van-benthem-qfbis}
Every modal formula is equivalent to an $\mathcal{L}_\Delta$-formula over the class of quasi-filter models iff it is invariant under qf-bisimulation.
\end{theorem}

\begin{proof}
By Prop.~\ref{prop.invariance-qf-bis}, we only need to show the `if' direction. For this, suppose that a modal formula $\phi$ is invariant under qf-bisimulation. If we show that $\phi$ is invariant under rel-$\Delta$-bisimulation, then according to van Benthem Characterization Theorem for rel-$\Delta$-bisimulation, $\phi$ is equivalent to an $\mathcal{L}_\Delta$-formula, then we are done.

For any models $\M$ and $\M'$ and $s\in \M$, $s'\in\M'$, assume that $(\M,s)\bis_{rel}(\M',s')$. By Prop.~\ref{prop.rel-qf}, $(qf(\M),s)\bis_{qf}(qf(\M'),s')$. By supposition, $qf(\M),s\Vvdash\phi$ iff $qf(\M'),s'\Vvdash\phi$. Then using Prop.~\ref{prop.rel-equiv-qf}, $\M,s\vDash\phi$ iff $\M',s'\vDash\phi$. We have thus shown that $\phi$ is invariant under rel-$\Delta$-bisimulation, as desired.
\end{proof}

Similarly, we can obtain

\begin{theorem}\label{thm.van-benthem-qfbis2}
Every first-order $\mathcal{L}_1$-formula is equivalent to an $\mathcal{L}_\Delta$-formula over the class of quasi-filter models iff it is invariant under qf-bisimulation.
\end{theorem}}

%We do not know whether the converse of Prop.~\ref{prop.rel-qf} also holds in the current stage, which is left for future work.
We do not know whether the converse of Prop.~\ref{prop.rel-qf} also holds in the current stage. Note that this is important, since if it holds, then we can see clearly the essence of rel-$\Delta$-bisimulation, i.e. precocongruences with those four quasi-filter properties. We leave it for future work.

\section{Frame definability}

Recall that under the old neighborhood semantics, all the ten neighborhood properties in Def.~\ref{def.properties} are undefinable in $\mathcal{L}_\Delta$. In contrast, under the new semantics, almost all these properties are definable in the same language. The following witnesses the properties and the corresponding formulas defining them. Recall that $(c)$ is the minimal condition of neighborhood frames.
$$
\begin{array}{llll}
(n)&\Delta\top&(i)& \Delta p\land \Delta q\to \Delta (p\land q)\\
(s)&\Delta (p\land q)\to \Delta p\land \Delta q&(c)&\Delta p\lra\Delta \neg p\\
(d)&\nabla p&(t)&\Delta p\to p\\
(b)&p\to\Delta\nabla p&(4)&\Delta p\to\Delta\Delta p\\
(5)&\nabla p\to\Delta\nabla p&&\\
\end{array}
$$

\weg{$(n)$:~~~$\Delta\top$

$(i)$:~~~$\Delta p\land \Delta q\to \Delta (p\land q)$

$(s)$:~~~$\Delta (p\land q)\to \Delta p\land \Delta q$

$(c)$:~~~$\Delta p\to\Delta \neg p$

$(d)$:~~~$\nabla p$

$(t)$:~~~$\Delta p\to p$

$(b)$:~~~$p\to\Delta\nabla p$

$(4)$:~~~$\Delta p\to\Delta\Delta p$

$(5)$:~~~$\nabla p\to\Delta\nabla p$}

\begin{proposition}\label{prop.fr-def}
The right-hand formulas define the corresponding left-hand properties.
\end{proposition}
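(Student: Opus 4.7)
The strategy follows Proposition \ref{prop.def-c} closely: for each pair $(P,\phi_P)$ in the table, I would verify (a) if $\F$ has property $P$ then $\F\Vvdash\phi_P$, and (b) if $\F$ lacks $P$ at some $s$, then there is a valuation on $\F$ refuting $\phi_P$ at $s$. Throughout we work on $c$-frames, since $(c)$ has been declared the minimal condition; this lets us replace $p^\M\in N(u)$ with $(\neg p)^\M\in N(u)$ whenever convenient.

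For direction (a), the cases $(n)$, $(i)$, $(s)$, and $(t)$ are essentially rewriting: e.g.\ for $(i)$, if $p^\M,q^\M\in N(s)$ then $(p\land q)^\M=p^\M\cap q^\M\in N(s)$ by closure under intersections; for $(t)$, $p^\M\in N(s)$ together with $(t)$ yields $s\in p^\M$. The cases $(4)$ and $(5)$ exploit the fact that under the new semantics $(\Delta p)^\M=\{u:p^\M\in N(u)\}$ and $(\nabla p)^\M=\{u:p^\M\notin N(u)\}$, so the frame clauses translate directly into $\Delta p\to\Delta\Delta p$ and $\nabla p\to\Delta\nabla p$ respectively. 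For $(b)$ we additionally apply $(c)$ at each world $u$ to see that $\{u:p^\M\notin N(u)\}=\{u:S\backslash p^\M\notin N(u)\}$, which then matches the frame clause of $(b)$ applied to $X=p^\M$.

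For direction (b), in each case I fix a witness to $P$ failing and define a valuation. For $(n)$, $(i)$, $(t)$, $(4)$, $(5)$, $(b)$ we set $V(p)=X$ (together with $V(q)=Y$ in the case of $(i)$), in the style of Proposition \ref{prop.def-c}. The only nonobvious choice is $(s)$: given the witness $X\subseteq Y$ with $X\in N(s)$ and $Y\notin N(s)$, set $V(p)=Y$ and $V(q)=X\cup(S\backslash Y)$, so that $(p\land q)^\M=X\in N(s)$ but $p^\M=Y\notin N(s)$, refuting $\Delta(p\land q)\to\Delta p$.

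The only case requiring a small conceptual remark, and therefore the main obstacle, is $(d)$. Under the standing assumption $(c)$, the clause $(d)$ collapses to $N(s)=\emptyset$, since $(c)$ gives $X\in N(s)\Rightarrow S\backslash X\in N(s)$ while $(d)$ gives $X\in N(s)\Rightarrow S\backslash X\notin N(s)$; hence $\Delta p$ fails at every state for every valuation, making $\nabla p$ valid. Conversely, if $\nabla p$ is valid on $\F$, then choosing $V(p)=X$ for each $X\subseteq S$ shows $X\notin N(s)$ for every $X$, so $N(s)=\emptyset$ and $(d)$ holds vacuously. With this observation in place, all eight remaining clauses reduce to the routine bookkeeping sketched above.
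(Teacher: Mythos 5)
Your proposal is correct and follows essentially the same route as the paper: a case-by-case frame-correspondence argument over $c$-frames, invoking the standing property $(c)$ exactly where the paper does (in the clauses for $(b)$ and $(d)$), with the refuting valuations $V(p)=X$ chosen in the style of Proposition~\ref{prop.def-c}. Your reformulation of $(d)$ as ``$N(s)=\emptyset$ on $c$-frames'' is just a repackaging of the paper's direct contradiction argument, and your witness valuation for $(s)$ checks out.
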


\begin{proof}

By Prop.~\ref{prop.def-c}, $\Delta p\lra\Delta\neg p$~defines $(c)$. For other properties, we take $(d)$ and $(b)$ as examples, which resort to the property $(c)$. Given any $c$-frame $\mathcal{F}=\lr{S,N}$.

Suppose that~$\mathcal{F}$~has~$(d)$, to show that~$\mathcal{F}\Vvdash\nabla p$. Assume, for a contradiction that there is a valuation~$V$ on $\mathcal{F}$, and $s\in S$, such that~$\M,s\not\Vvdash\nabla p$, where~$\M=\lr{\mathcal{F},V}$. Then~$p^\M\in N(s)$. On the one hand, by supposition, $S\backslash p^\M\notin N(s)$; one the other hand, by~$(c)$, $S\backslash p^\M\in N(s)$, contradiction. Conversely, assume that~$\mathcal{F}$~does not have~$(d)$, to show that~$\mathcal{F}\not\Vvdash\nabla p$. By assumption, there is an~$X$~such that~$X\in N(s)$~and~$S\backslash X\in N(s)$. Define a valuation $V$ on~$\mathcal{F}$~such that~$V(p)=X$, and let~$\M=\lr{\mathcal{F},V}$. Thus~$p^\M\in N(s)$, i.e.,~$\M,s\Vvdash \Delta p$, and hence~$\M,s\not\Vvdash \nabla p$.

Suppose~$\mathcal{F}$~has~$(b)$, to show~$\mathcal{F}\Vvdash p\to\Delta \nabla p$. For this, given any $\M=\lr{S,N,V}$ and $s\in S$, assume that~$\M,s\Vvdash p$, then~$s\in p^\M$. By supposition, $\{u\in S\mid S\backslash p^\M\notin N(u)\}\in N(s)$. By~$(c)$, this is equivalent to that~$\{u\in S\mid p^\M\notin N(u)\}\in N(s)$, i.e.,~$\{u\in S\mid \M,u\Vvdash \nabla p\}\in N(s)$, viz.,~$(\nabla p)^\M\in N(s)$, thus~$\M,s\Vvdash \Delta \nabla p$. Conversely, suppose~$\mathcal{F}$~does not have~$(b)$, to show~$\mathcal{F}\not\Vvdash p\to\Delta \nabla p$. By supposition, there is an~$s\in S$~and~$X\subseteq S$, such that~$s\in X$~and~$\{u\in S\mid S\backslash X\notin N(u)\}\notin N(s)$. Define a valuation $V$ on~$\mathcal{F}$~such that~$V(p)=X$, and let~$\M=\lr{\mathcal{F},V}$. Then~$\M,s\Vvdash p$, and~$\{u\in S\mid S\backslash p^\M\notin N(u)\}\notin N(s)$. By~$(c)$ again, this means that~$\{u\in S\mid p^\M\notin N(u)\}\notin N(s)$, that is,~$\{u\in S\mid \M,u\Vvdash \nabla p\}\notin N(s)$, i.e.,~$(\nabla p)^\M\notin N(s)$, therefore~$\M,s\not\Vvdash \Delta \nabla p$.
\end{proof}

The following result will be used in the next section.
\begin{proposition}
$\Delta p\to\Delta(p\to q)\vee\Delta(\neg p\to r)$ defines the property $(ws)$, where $(ws)$ is as defined in Def.~\ref{def.ws}.
\end{proposition}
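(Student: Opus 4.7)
The plan is to follow the standard two-direction recipe for frame definability used in Proposition~\ref{prop.fr-def}, with one small twist: choosing the right valuation so that the ``mismatch'' between the two disjuncts in the formula and the two non-membership conditions in $(ws)$ lines up correctly.

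For the soundness direction, assume $\mathcal{F}=\lr{S,N}$ has $(ws)$. Given any model $\M$ on $\mathcal{F}$ and $s\in S$ with $\M,s\Vvdash\Delta p$, I would translate this to $p^\M\in N(s)$ and then apply $(ws)$ with $X=p^\M$, $Y=r^\M$, and $Z=q^\M$. This yields $p^\M\cup r^\M\in N(s)$ or $(S\setminus p^\M)\cup q^\M\in N(s)$. Since $(\neg p\to r)^\M=p^\M\cup r^\M$ and $(p\to q)^\M=(S\setminus p^\M)\cup q^\M$, the disjunction $\Delta(p\to q)\vee\Delta(\neg p\to r)$ follows immediately. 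This is a pure bookkeeping step, no obstacle here.

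For the converse, suppose $\mathcal{F}$ does not have $(ws)$. Then there exist $s\in S$ and sets $X,Y,Z\subseteq S$ with $X\in N(s)$, $X\cup Y\notin N(s)$, and $(S\setminus X)\cup Z\notin N(s)$. The key choice is the valuation: I would set $V(p)=X$, $V(q)=Z$, and $V(r)=Y$ (note the swap of $Y$ and $Z$). Then $p^\M=X\in N(s)$ gives $\M,s\Vvdash\Delta p$; meanwhile $(p\to q)^\M=(S\setminus X)\cup Z\notin N(s)$ and $(\neg p\to r)^\M=X\cup Y\notin N(s)$, so neither $\Delta(p\to q)$ nor $\Delta(\neg p\to r)$ holds at $s$. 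Thus the formula is refuted on $\mathcal{F}$.

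The only step that requires any care is picking the valuation in the converse so that the polarity of each disjunct matches the corresponding non-membership condition of $(ws)$; a naive assignment $V(q)=Y$, $V(r)=Z$ would leave $X\cup Y$ and $(S\setminus X)\cup Z$ on the wrong sides. Once the valuation is chosen as above, everything reduces to unfolding the semantic clause for $\Delta$ under $\Vvdash$ and reading off the truth sets of $p\to q$ and $\neg p\to r$, so I expect the whole argument to occupy at most a few lines and to present no genuine technical obstacle.
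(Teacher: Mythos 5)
Your proposal is correct and follows essentially the same route as the paper: the same application of $(ws)$ with $Y=r^\M$, $Z=q^\M$ in the soundness direction, and the same valuation $V(p)=X$, $V(q)=Z$, $V(r)=Y$ in the converse (the paper merely renames $Y,Z$ to the unions $X\cup Y$ and $(S\backslash X)\cup Z$ first, which is inessential). No gaps.
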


\begin{proof}
Let $\F=\lr{S,N}$ be a neighborhood frame.

First suppose $\F$ has $(ws)$, we need to show $\F\Vvdash\Delta p\to\Delta(p\to q)\vee\Delta(\neg p\to r)$. For this, assume for any model $\M$ based on $\F$ and $s\in S$ that $\M,s\Vvdash\Delta p$. Then $p^\M\in N(s)$. By supposition, $p^\M\cup r^\M\in N(s)$ or $(\neg p)^\M\cup q^\M\in N(s)$. The former implies $(\neg p\to r)^\M\in N(s)$, thus $\M,s\Vvdash\Delta(\neg p\to r)$; the latter implies $(p\to q)^\M\in N(s)$, thus $\M,s\Vvdash\Delta(p\to q)$. Either case implies $\M,s\Vvdash\Delta(p\to q)\vee\Delta(\neg p\to r)$, hence $\M,s\Vvdash\Delta p\to\Delta(p\to q)\vee\Delta(\neg p\to r)$. Therefore $\F\Vvdash\Delta p\to\Delta(p\to q)\vee\Delta(\neg p\to r)$.

Conversely, suppose $\F$ does not have $(ws)$, we need to show $\F\not\Vvdash\Delta p\to\Delta(p\to q)\vee\Delta(\neg p\to r)$. From the supposition, it follows that there are $X$, $Y$ and $Z$ such that $X\in N(s),X\subseteq Y$ and $Y\notin N(s)$, $S\backslash X\subseteq Z$ and $Z\notin N(s)$. Define $V$ as a valuation on $\F$ such that $V(p)=X$, $V(q)=Z$ and $V(r)=Y$. Since $p^\M=V(p)\in N(s)$, we have $\M,s\Vvdash\Delta p$. Since $X\subseteq Y$, $(\neg p\to r)^\M=X\cup Y=Y\notin N(s)$, thus $\M,s\not\Vvdash\Delta(\neg p\to r)$. Since $S\backslash X\subseteq Z$, $(p\to q)^\M=(S\backslash X)\cup Z= Z\notin N(s)$, and thus $\M,s\not\Vvdash\Delta(p\to q)$. Hence $\M,s\not\Vvdash\Delta p\to\Delta(p\to q)\vee\Delta(\neg p\to r)$, and therefore $\F\not\Vvdash\Delta p\to\Delta(p\to q)\vee\Delta(\neg p\to r)$.
\end{proof}

Note that in the above proposition, we do not use the property $(c)$, that is to say, it holds for the class of all neighborhood frames.

\weg{\begin{proposition}
The frame property $(s)$ (if $X\in N(s)$ and $X\subseteq Y\subseteq S$, then $Y\in N(s)$) is not definable in $\CL$.
\end{proposition}

\begin{proof}
{\bf Proof is wrong, since we do not have that under the new semantics, for all $\phi\in\CL$, $\mathcal{F}_1\vDash\phi$ iff $\mathcal{F}_2\vDash\phi$!}

Consider the following frames $\mathcal{F}_1=\lr{S_1,N_1}$ and $\mathcal{F}_2=\lr{S_2,N_2}$:
$$
\hspace{-1cm}
\xymatrix@L-10pt@C-5pt@R-10pt{\{s_1\}\\
s_1\ar[u]\ar[d]\\
\emptyset\\
\mathcal{F}_1}
\qquad
\qquad
\xymatrix@L-10pt@C-5pt@R-10pt{&\{s_2,t_2\}&\\
s_2\ar[ur]\ar[dr]&&t_2\ar[ul]\ar[dl]\\
&\emptyset&\\
&\mathcal{F}_2}
$$
First, both $\mathcal{F}_1$ and $\mathcal{F}_2$ are quasi-filter frames: Closure under complements, intersections and containing the unit are straightforward. It is obvious that $\mathcal{F}_1$ is closed under supersets or co-supersets. For $\mathcal{F}_2$, if $X\in N_2(x)$ (where $x\in \{s_2,t_2\}$), then $X=\emptyset$ or $X=S$. If $X=\emptyset$, then $(S\backslash X)\cup Z=S\in N_2(x)$; if $X=S_2$, then $X\cup Y=S_2\in N_2(x)$.

observe that $\mathcal{F}_1$ satisfies (s) while $\mathcal{F}_2$ does not. For instance, $\emptyset\in N_2(s_2)$, but $\{s_2\}\notin N_2(s_2)$, thus $\mathcal{F}_3$ does not satisfy (s).

We next show that: for any $\phi\in\NCL$, $\mathcal{F}_1\vDash\phi$ iff $\mathcal{F}_2\vDash\phi$ iff $\mathcal{F}_3\vDash\phi$.

Suppose that $\mathcal{F}_1\nvDash\phi$,  then there exists $\M_1=\lr{\mathcal{F}_1,V}$ such that $\M_1,s_1\nvDash\phi$. Define a valuation $V_2$ on $\mathcal{F}_2$ as $p\in V_2(s_2)$ iff $p\in V_1(s_1)$ for all $p\in\BP$. By induction on $\phi$, we can show that $\M_1,s_1\vDash\phi$ iff $\M_2,s_2\vDash\phi$, where the only non-trivial case $\Delta\phi$ is proved similarly to the corresponding proof in Prop.~\ref{prop.lessexpressive-one}. From this, it follows that $\M_2,s_2\nvDash\phi$, therefore $\mathcal{F}_1\nvDash\phi$. The converse is similar. Therefore $\mathcal{F}_1\vDash\phi$ iff $\mathcal{F}_2\vDash\phi$.

By the similar argument, we can show $\mathcal{F}_2\vDash\phi$ iff $\mathcal{F}_3\vDash\phi$.

If (d) were to be defined by a set of $\NCL$-formulas, say $\Gamma$, then since $\mathcal{F}_1$ satisfies (d), we have $\mathcal{F}_1\vDash\Gamma$. Then we should also have $\mathcal{F}_2\vDash\Gamma$, i.e., $\mathcal{F}_2$ satisfies (d), contradiction. The proof for (t) is similar.

If (b) were to be defined by a set of $\NCL$-formulas, say $\Gamma$, then since $\mathcal{F}_2$ satisfies (b), we have $\mathcal{F}_2\vDash\Gamma$. Then we should also have $\mathcal{F}_3\vDash\Gamma$, i.e., $\mathcal{F}_3$ satisfies (b), contradiction. The proof for other properties are similar.
\end{proof}}

\section{Axiomatizations}

This section presents axiomatizations of $\mathcal{L}_\Delta$ over various classes of frames. The minimal system $\mathbb{E}^\Delta$ consists of the following axiom schemas and inference rule.

%Let \NCL\ be the set of formulas including axioms \TAUT\ and \EquiKw, and closed under the rule \REKw.

\[\begin{array}{ll}
\TAUT& \text{all instances of tautologies}\\

%\negKw& \nabla\phi\lra\neg\Delta\phi\\

\EquiKw& \Delta\phi\lra \Delta\neg\phi\\

\REKw &\dfrac{\phi\lra\psi}{\Delta\phi\lra\Delta\psi}
\end{array}\]

Note that $\mathbb{E}^\Delta$ is the same as $\mathbb{CCL}$ in~\cite[Def.~7]{FanvD:neighborhood}. Recall that $(c)$ is the minimal neighborhood property. %We will show that $\mathbb{E}^\Delta$ is sound and strongly complete with respect to the class of $(c)$-frames.

\weg{Let $|\phi|_{\NCL}$ be the proof set of $\phi$ in \NCL, formally, $|\phi|_{\NCL}=\{s\in S^c\mid \phi\in s\}$, where $S^c$ is defined as below.

\begin{definition} [Canonical model]
The \emph{canonical neighborhood model} of $\ENCL$ is the tuple $\M^c=\lr{S^c,N^c,V^c}$, where
\begin{itemize}
\item $S^c=\{s \mid s\text{ is an \NCL-maximal consistent set}\}$
\item $N^c(s)=\{|\phi|_{\NCL}\mid \Delta\phi\in s\}$
\item $V^c(p)=\{s\mid s\in |p|_{\NCL}\}$
\end{itemize}
\end{definition}

\begin{lemma}
For any $s\in S^c$ and formula $\phi$, $\M^c,s\vDash\phi$ iff $\phi\in s$. That is to say, $\phi^\M=|\phi|_{\NCL}$.
\end{lemma}

\begin{proof}
By induction on $\phi$. The non-trivial cases are $\Delta\phi$.
\begin{align*}
\M^c,s\vDash\Delta\phi&\text{ iff }\phi^\M\in N^c(s)\\
&\text{ iff } |\phi|_{\NCL}\in N^c(s)\\
&\text{ iff } \Delta\phi\in s
\end{align*}
\end{proof}

We also need to show $N^c$ is well defined: $N^c$ is indeed a function, and it satisfies the property $(c)$.
\begin{lemma}
(i) if $|\phi|_\NCL\in N^c(s)$ and $|\phi|_\NCL=|\psi|_\NCL$, then $\Delta\psi\in s$.

(ii) $N^c$ satisfies $(c)$.
\end{lemma}

\begin{proof}
For (i): assume that the conditions hold, to show that $\Delta\psi\in s$. From $|\phi|_\NCL\in N^c(s)$, it follows that $\Delta\phi\in s$. From $|\phi|_\NCL=|\psi|_\NCL$, we have $\vdash\phi\lra\psi$, by \REKw, we get $\vdash\Delta\phi\lra\Delta\psi$, thus $\Delta\psi\in s$.

For (ii): assume that $X\in N^c(s)$, applying the definition of $N^c$, we have $X=|\phi|_\NCL\in N^c(s)$ for some $\phi$, thus $\Delta\phi\in s$. Using the axiom $\EquiKw$, we obtain $\Delta\neg\phi\in s$. Applying the definition of $N^c$ again, we get that $|\neg\phi|_\NCL\in N^c(s)$, i.e. $S\backslash|\phi|_\NCL\in N^c(s)$, therefore $S\backslash X\in N^c(s)$. %The implication from $S\backslash X\in N^c(s)$ to $X\in N^c(s)$ is similar.
\end{proof}}
%Based on the previous lemmas, it is standard to get that
\begin{theorem}
$\mathbb{E}^\Delta$ is sound and strongly complete with respect to the class of $c$-frames.
\end{theorem}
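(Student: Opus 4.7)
The plan is to establish soundness by checking validities and then prove strong completeness via a standard canonical model construction adapted to the new neighborhood semantics.

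For soundness, I would verify that each axiom schema is valid on every $c$-frame and that $\REKw$ preserves validity. The propositional tautologies and modus ponens are immediate. The axiom $\EquiKw$ is valid on every $c$-frame by Proposition~\ref{prop.def-c}, which tells us that $\Delta p\lra\Delta\neg p$ precisely defines property $(c)$. For $\REKw$, if $\phi\lra\psi$ is valid on a $c$-frame $\F$, then in any model $\M$ based on $\F$ we have $\phi^\M=\psi^\M$, so $\phi^\M\in N(s)$ iff $\psi^\M\in N(s)$, giving $\M,s\Vvdash\Delta\phi\lra\Delta\psi$.

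For strong completeness, I would build a canonical model $\M^c=\lr{S^c,N^c,V^c}$ in the usual way: $S^c$ is the set of all maximal $\mathbb{E}^\Delta$-consistent sets, $V^c(p)=\{s\in S^c\mid p\in s\}$, and $N^c(s)=\{|\phi|\mid \Delta\phi\in s\}$, where $|\phi|=\{s\in S^c\mid \phi\in s\}$ is the proof set of $\phi$. Two things require attention before the truth lemma. First, $N^c$ must be well-defined, i.e.\ independent of the representative formula chosen for a proof set: if $|\phi|=|\psi|$ then $\vdash\phi\lra\psi$ by a standard Lindenbaum argument, and $\REKw$ yields $\vdash\Delta\phi\lra\Delta\psi$, so $\Delta\phi\in s$ iff $\Delta\psi\in s$. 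Second, $\M^c$ must be a $c$-model: if $X\in N^c(s)$, then $X=|\phi|$ for some $\phi$ with $\Delta\phi\in s$; by $\EquiKw$ we get $\Delta\neg\phi\in s$, hence $|\neg\phi|=S^c\setminus|\phi|=S^c\setminus X\in N^c(s)$.

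The Truth Lemma $\M^c,s\Vvdash\phi\iff\phi\in s$ is then proved by induction on $\phi$. Atomic and Boolean cases are standard. For the modal case, $\M^c,s\Vvdash\Delta\phi$ iff $\phi^{\M^c}\in N^c(s)$ iff (by the induction hypothesis $\phi^{\M^c}=|\phi|$) $|\phi|\in N^c(s)$ iff $\Delta\phi\in s$, where the last equivalence uses the well-definedness of $N^c$ just verified. Strong completeness follows in the usual way: any $\mathbb{E}^\Delta$-consistent set $\Gamma$ extends to an MCS $s\in S^c$, and then $\M^c,s\Vvdash\Gamma$ by the Truth Lemma, with $\M^c$ based on a $c$-frame.

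The one subtle point, and the main obstacle worth flagging, is the well-definedness of $N^c$: without $\REKw$ the definition $N^c(s)=\{|\phi|\mid \Delta\phi\in s\}$ would be ambiguous, and without $\EquiKw$ the canonical frame would not possess property $(c)$. These two ingredients of $\mathbb{E}^\Delta$ are exactly what is needed to make the canonical construction land inside the class of $c$-frames, so the completeness proof is really a verification that the axiomatization is matched to the new semantics.
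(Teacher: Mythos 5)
Your proof is correct, but it takes a genuinely different route from the paper. The paper disposes of this theorem in one line: it invokes the known soundness and strong completeness of $\mathbb{E}^\Delta$ with respect to \emph{all} neighborhood frames under the \emph{old} semantics $\Vdash$ (Theorem~1 of~\cite{FanvD:neighborhood}) together with Corollary~\ref{coro.c-valid-reduce}, which states that $\Gamma\Vvdash_c\phi\iff\Gamma\Vdash\phi$; the whole point of the preceding section (Propositions~\ref{prop.satis-same} and~\ref{prop.neigh-c}) is to make exactly this transfer available. Your argument instead builds the canonical model from scratch, with $N^c(s)=\{|\phi|\mid\Delta\phi\in s\}$, and verifies well-definedness via $\REKw$ and property $(c)$ via $\EquiKw$ --- which is precisely the direct construction the paper gestures at in the remark following Theorem~\ref{thm.comp-cl} (``the strong completeness of $\mathbb{E}^\Delta$ \dots can be shown directly, by defining the canonical neighborhood function $N^c(s)=\{|\phi|\mid\Delta\phi\in s\}$''). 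What the paper's reduction buys is brevity and a reuse of prior work; what your construction buys is self-containedness (no appeal to the external completeness theorem for the old semantics) and an explicit exhibition of how the two axioms $\REKw$ and $\EquiKw$ are each individually responsible for the canonical frame being well-defined and landing in the class of $c$-frames --- a point the reduction proof leaves implicit. One small remark: your identification of the ``one subtle point'' is exactly right, and your justification of $|\phi|=|\psi|\Rightarrow\;\vdash\phi\lra\psi$ via the Lindenbaum lemma is the standard and correct way to close it.
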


\begin{proof}
Immediate by the soundness and strong completeness of $\mathbb{E}^\Delta$ w.r.t. the class of all neighborhood frames under $\Vdash$~\cite[Thm.~1]{FanvD:neighborhood} and Coro.~\ref{coro.c-valid-reduce}.
\weg{Soundness is immediate by Prop.~\ref{prop.def-c}.

For strong completeness, suppose for all $\Gamma\cup\{\phi\}\subseteq\mathcal{L}_\Delta$, we have that $\Gamma\Vvdash_c\phi$. By Coro.~\ref{coro.c-valid-reduce},
$\Gamma\Vdash\phi.$ Using~\cite[Thm.~2]{FanvD:neighborhood}, we obtain $\Gamma\vdash_{\mathbb{E}^\Delta}\phi$, as desired.}
\end{proof}

%\medskip

Now consider the following extensions of $\mathbb{E}^\Delta$, which are sound and strongly complete with respect to the corresponding frame classes. We omit the proof detail since it is straightforward.
\[ \begin{array}{|l|l|l|l|}
  \hline
  \text{notation}& \text{axioms}& \text{systems} & \text{frame classes} \\
  \hline
  \Delta\texttt{M} & \Delta(\phi\land\psi)\to\Delta\phi\land\Delta\psi& \mathbb{M}^\Delta=\mathbb{E}^\Delta+\Delta\texttt{M} & cs \\
  \Delta\texttt{C} & \Delta \phi\land\Delta\psi\to\Delta(\phi\land\psi)& \mathbb{R}^\Delta=\mathbb{M}^\Delta+\Delta\texttt{C} & csi\\
    \hline
\end{array}
\]

\weg{We will look at various extensions of $\NCL$ by adding one or more of the following axioms:
\[\begin{array}{ll}
\KwM&\Delta(p\land q)\to\Delta p\land \Delta q \\
\KwC&\Delta p\land \Delta q\to \Delta(p\land q)\\
\KwN&\Delta\top \\
\KwD&\nabla p\\
\KwT&\Delta p\to p\\
\end{array}\]

Let $\CNCL$ be the extension of $\ENCL$ by adding the following axiom:
$$\Delta p\land\Delta q\to\Delta(p\land q).$$

\begin{proposition}
$N^c$ satisfies $(i)$.
\end{proposition}

\begin{proof}
Suppose that $X,Y\in N^c(s)$, to show $X\cap Y\in N^c(s)$. By supposition, there are $\phi,\psi$ such that $X=|\phi|\in N^c(s)$ and $Y=|\psi|\in N^c(s)$. By the definition of $N^c$, we obtain $\Delta\phi\in s$ and $\Delta\psi\in s$, then by axiom we get $\Delta(\phi\land\psi)\in s$, thus $|\phi\land\psi|\in N^c(s)$, i.e. $|\phi|\cap|\psi|\in N^c(s)$. Therefore, $X\cap Y\in N^c(s)$, as desired.
\end{proof}

\begin{theorem}
$\CNCL$ is sound and strongly complete on the class of all $(i)$-frames.
\end{theorem}

\begin{theorem}
$\CNCL$ is sound and strongly complete on the class of all $(i)$-frames with respect to the original neighborhood frames.
\end{theorem}

\begin{proof}
The soundness may not hold!
\end{proof}

Let $\MNCL$ be the extension of $\ENCL$ by adding the following axiom:
$$\Delta (p\land q)\to \Delta p\land \Delta q.$$

\begin{theorem}
$\MNCL$ is sound and strongly complete on the class of all $(s)$-frames.
\end{theorem}}
%One may ask the following question: is $\mathbb{R}^\Delta+\Delta\top$ sound and strongly complete with respect to the class of filters, i.e. the frame classes possessing $(s),(i), (n)$? The answer is negative, although $\mathbb{R}^\Delta+\Delta\top$ is sound and strongly complete with respect to the class of filters satisfying $(c)$, it is {\em not} sound (and strongly complete) with respect to the class of filters.

One may ask the following question: is $\mathbb{R}^\Delta+\Delta\top$ sound and strongly complete with respect to the class of filters, i.e. the frame classes possessing $(s),(i), (n)$? The answer is negative, since the soundness fails, although it is indeed sound and strongly complete with respect to the class of filters satisfying $(c)$.

%As claimed, for $\mathcal{L}_\Delta$, although not every quasi-filter model has a pointwise equivalent Kripke model, logical consequence relations over Kripke semantics and over new neighborhood semantics on quasi-filter models coincide with each other. Now we are ready to show this claim. Consider the following axiomatization $\mathbb{K}^\Delta$, which is provably equivalent to $\mathbb{CL}$ in~\cite[Def.~4.1]{Fanetal:2015}.
Now consider the following axiomatization $\mathbb{K}^\Delta$, which is provably equivalent to $\mathbb{CL}$ in~\cite[Def.~4.1]{Fanetal:2015}.

%The following axiomatic system is an equivalent version of axiomatization~$\mathbb{CL}$ in \cite[Def.~4.1]{Fanetal:2015}

\begin{definition}[Axiomatic system~$\mathbb{K}^\Delta$]\label{def.plkw} The axiomatic system~$\mathbb{K}^\Delta$~is the extension of $\mathbb{E}^\Delta$ plus the following axiom schemas:
\[\begin{array}{ll}
\KwTop&\Delta\top\\
\KwCon & \Delta\phi\land\Delta\psi\to\Delta (\phi\land\psi)\\%$\Delta(\chi\to \phi)\land\Delta(\neg \chi\to \phi)\to\Delta \phi$\\
%\texttt{T} &$ \Box  \phi\to \phi$)\\\textbf{ }
\KwDis & \Delta \phi\to \Delta (\phi\to \psi )\lor \Delta(\neg \phi\to \chi)\\
\end{array}\]
\end{definition}

\weg{\begin{definition}[Axiomatic system~$\mathbb{K}^\Delta$]\label{def.plkw} The axiomatic system~$\mathbb{K}^\Delta$~consists of the following axiom schemas and inference rules:\\
\begin{minipage}{0.60\textwidth}
\begin{center}
\begin{tabular}{lc}
%\multicolumn{2}{c}{\SPLKw}\\
% \lcline{1-2}\rcline{3-4}
\multicolumn{2}{l}{Axiom schemas}\\
%\lcline{1-2}\rcline{3-4}
\TAUT & \text{all instances of tautologies}\\
\KwTop&$\Delta\top$\\
\KwCon & $\Delta\phi\land\Delta\psi\to\Delta (\phi\land\psi)$\\%$\Delta(\chi\to \phi)\land\Delta(\neg \chi\to \phi)\to\Delta \phi$\\
%\texttt{T} &$ \Box  \phi\to \phi$)\\\textbf{ }
\KwDis & $\Delta \phi\to \Delta (\phi\to \psi )\lor \Delta(\neg \phi\to \chi)$\\
\EquiKw & $\Delta \phi\lra\Delta\neg \phi$ \\
\end{tabular}
\end{center}
\end{minipage}
\hfill
\begin{minipage}{0.50\textwidth}
\begin{center}
\begin{tabular}{lc}
\\
\multicolumn{2}{l}{Inference rules}\\
\MP &  $\dfrac{\phi, \phi\to\psi}{\psi}$ \\
%\GENKw & $\dfrac{\phi}{\Delta\phi}$ \\
\REKw & $\dfrac{\phi\lra\psi}{\Delta\phi\lra\Delta\psi}$\\
\end{tabular}
\end{center}
\end{minipage}\\
\end{definition}}
%The subsystem of $\SPLKw$ without the axiom $\KwTop$ is denoted by $\SPLKwp$. The subsystem of $\SPLKwp$ without the axiom $\KwCon$ is denoted by $\SPLKwpp$. The subsystem of $\SPLKwpp$ without the axiom $\KwDis$ is denoted by $\SPLKwone$. The subsystem of $\SPLKw$ without axioms $\KwCon$ or $\KwDis$ is denoted by $\SPLKwn$. Due to Axiom $\KwTop$ and Rule $\REKw$, it is not hard to show that the rule $\dfrac{\phi}{\Delta\phi}$ (notation: $\GENKw$) is admissible in $\SPLKwn$.

%By $\Gamma\Vdash_q\phi$ we mean that, for every quasi-filter model $\M$ and $s$ in $\M$, if $\M,s\Vdash \Gamma$, then $\M,s\Vdash\phi$.

\begin{theorem}\label{thm.comp-cl}
$\mathbb{K}^\Delta$ is sound and strongly complete with respect to the class of quasi-filter frames. %Therefore, for all $\Gamma\cup\{\phi\}\subseteq \CL$, we have $\Gamma\Vvdash_q\phi$ iff $\Gamma\vdash\phi$.
\end{theorem}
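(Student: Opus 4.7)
The plan is to split the proof into the usual two halves and to exploit the infrastructure already set up in the preceding sections.

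For soundness, I would proceed axiom by axiom, showing that each schema of $\mathbb{K}^\Delta$ is valid on every quasi-filter frame. The axioms and rule of $\mathbb{E}^\Delta$ are already sound on $c$-frames, and since quasi-filter frames have property $(c)$ by Definition~\ref{def.ws}, they carry over without change. For the three remaining axioms, I invoke the frame-definability results of the previous section directly: $\KwTop$ corresponds to $(n)$, $\KwCon$ corresponds to $(i)$ (one direction of the ``$\Delta\texttt{C}$'' entry in the table), and $\KwDis$ corresponds to $(ws)$ by the proposition just proved. Since every quasi-filter frame satisfies all four of $(n)$, $(i)$, $(c)$, $(ws)$ by definition, each axiom is valid, and soundness follows.

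For strong completeness, I would leverage the fact, recorded just before Definition~\ref{def.plkw}, that $\mathbb{K}^\Delta$ is provably equivalent to the axiomatization $\mathbb{CL}$ of~\cite{Fanetal:2015}, which is known to be sound and strongly complete with respect to the class of all Kripke frames under the relational semantics $\vDash$. So fix an $\mathbb{K}^\Delta$-consistent set $\Gamma \subseteq \mathcal{L}_\Delta$. By the Kripke completeness of $\mathbb{CL}$, there exist a Kripke model $\M = \langle S, R, V\rangle$ and a world $s \in S$ such that $\M, s \vDash \psi$ for every $\psi \in \Gamma$. Now pass to the qf-variation $qf(\M)$ from Definition~\ref{def.qf-variation}: by Proposition~\ref{prop.qf-model} the model $qf(\M)$ is a quasi-filter model, and by Proposition~\ref{prop.rel-equiv-qf} it is pointwise equivalent to $\M$ with respect to $\mathcal{L}_\Delta$. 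Therefore $qf(\M), s \Vvdash \psi$ for every $\psi \in \Gamma$, witnessing satisfiability on a quasi-filter frame. Equivalently, if $\Gamma \Vvdash_{qf} \phi$ over the class of quasi-filter models, then $\Gamma \vDash \phi$ over Kripke models, and hence $\Gamma \vdash_{\mathbb{K}^\Delta} \phi$.

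There is no real obstacle in this argument: the heavy lifting on the semantic side was done in Section~\ref{Sec.qf-structures} (the construction of $qf(\M)$ and the verification that it is quasi-filter and pointwise equivalent), and the heavy lifting on the syntactic side is borrowed from~\cite{Fanetal:2015}. The only subtle point worth flagging in the write-up is that the converse simulation, from quasi-filter models back to Kripke models, does \emph{not} hold pointwise in general (Proposition~\ref{prop.quasi-filter-infinite}); this is why the completeness transfer has to go through \emph{Kripke-to-quasi-filter}, not the other way around, and it is also why we get only the coincidence of consequence relations rather than a model-by-model translation.
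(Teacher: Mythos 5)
Your proof is correct and follows essentially the same route as the paper's: soundness via the frame-definability results for the four axioms over quasi-filter frames, and strong completeness by taking a Kripke model of a consistent set (from the known completeness of $\mathbb{CL}$) and passing to its pointwise-equivalent qf-variation, i.e.\ Corollary~\ref{coro.Krip-quasi}. Your closing remark about why the transfer must go from Kripke to quasi-filter models (and not conversely) is a correct and worthwhile observation that matches the paper's discussion around Proposition~\ref{prop.quasi-filter-infinite}.
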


\begin{proof}
Soundness is immediate by frame definability results of the four axioms.

For strong completeness, since every $\mathbb{K}^\Delta$-consistent set is satisfiable in a Kripke model (cf. e.g.~\cite{Fanetal:2015}), by Coro.~\ref{coro.Krip-quasi}, every $\mathbb{K}^\Delta$-consistent is satisfiable in a quasi-filter model, thus also satisfiable in a quasi-filter frame.
\end{proof}

Note that \weg{instead of using the existing completeness results, }the strong completeness of $\mathbb{E}^\Delta$ and of $\mathbb{K}^\Delta$ can be shown directly, by defining the canonical neighborhood function $N^c(s)=\{|\phi|\mid \Delta\phi\in s\}$.

%As a corollary of Thm.~\ref{thm.comp-cl} and Coro.~\ref{coro.coincidelogicrel}, we have an alternative proof of the soundness and strong completeness of $\mathbb{K}^\Delta$ over the class of all Kripke frames.

\weg{\begin{corollary}\label{coro.coincidelogicrel}
The logical consequence relations $\Vvdash_q$ and $\vDash$ coincide for $\CL$. That is, for all $\Gamma\cup\{\phi\}\subseteq \CL$,
$$\Gamma\Vvdash_q\phi\iff \Gamma\vDash \phi.$$
\end{corollary}}

As claimed at the end of Sec.~\ref{Sec.qf-structures}, for {\bf CL}, although not every quasi-filter model has a pointwise equivalent Kripke model, logical consequence relations over Kripke semantics and over the new neighborhood semantics on quasi-filter models coincide with each other. Now we are ready to show this claim.

\begin{corollary}\label{coro.coincidelogicrel}
The logical consequence relations $\Vvdash_{qf}$ and $\vDash$ coincide for $\CL$. That is, for all $\Gamma\cup\{\phi\}\subseteq \mathcal{L}_\Delta$,
$\Gamma\Vvdash_{qf}\phi\iff \Gamma\vDash \phi,$
where, by $\Gamma\Vvdash_{qf}\phi$ we mean that, for every quasi-filter model $\M$ and $s$ in $\M$, if $\M,s\Vvdash \Gamma$, then $\M,s\Vvdash\phi$. Therefore, for all $\phi\in \mathcal{L}_\Delta$, $\Vvdash_{qf}\phi\iff\vDash \phi$, i.e., the new semantics over quasi-filter models has the same logic (valid formulas) on {\bf CL} as the Kripke semantics.
\end{corollary}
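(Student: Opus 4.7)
The plan is to prove both inclusions by combining Theorem~\ref{thm.comp-cl} with the known strong completeness of $\mathbb{K}^\Delta$ (equivalently, the system $\mathbb{CL}$ of~\cite{Fanetal:2015}) with respect to Kripke frames for {\bf CL}. Taking $\Gamma=\emptyset$ in the resulting equivalence will immediately yield the claim about validity.

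The direction $\Gamma\Vvdash_{qf}\phi\Rightarrow\Gamma\vDash\phi$ admits a direct semantic proof using the qf-variation construction. Suppose $\Gamma\Vvdash_{qf}\phi$ and let $\M$ be a Kripke model with $\M,s\vDash\Gamma$. By Prop.~\ref{prop.qf-model}, $qf(\M)$ is a quasi-filter model; by Prop.~\ref{prop.rel-equiv-qf}, $\M,s\vDash\psi\iff qf(\M),s\Vvdash\psi$ for every $\psi\in\mathcal{L}_\Delta$. Hence $qf(\M),s\Vvdash\Gamma$, so by hypothesis $qf(\M),s\Vvdash\phi$, and thus $\M,s\vDash\phi$. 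This half uses only Coro.~\ref{coro.Krip-quasi} and its constituent lemmas.

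The other direction $\Gamma\vDash\phi\Rightarrow\Gamma\Vvdash_{qf}\phi$ is the harder one, because the natural tactic of turning a quasi-filter counterexample into a Kripke counterexample is blocked by Prop.~\ref{prop.quasi-filter-infinite}. I would therefore route through the axiomatic system as a syntactic bridge: by the strong completeness of $\mathbb{K}^\Delta$ with respect to Kripke semantics (invoked in the proof of Thm.~\ref{thm.comp-cl}), $\Gamma\vDash\phi$ gives $\Gamma\vdash_{\mathbb{K}^\Delta}\phi$; by the soundness half of Thm.~\ref{thm.comp-cl}, $\Gamma\vdash_{\mathbb{K}^\Delta}\phi$ in turn gives $\Gamma\Vvdash_{qf}\phi$.

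The main obstacle is precisely this second direction, since quasi-filter models form a strictly broader class than (the neighborhood counterparts of) Kripke models for {\bf CL}. The completeness bridge $\Gamma\vDash\phi\iff\Gamma\vdash_{\mathbb{K}^\Delta}\phi\iff\Gamma\Vvdash_{qf}\phi$ avoids ever having to construct a Kripke model out of an arbitrary (possibly infinite) quasi-filter model, which Prop.~\ref{prop.quasi-filter-infinite} shows is in general impossible.
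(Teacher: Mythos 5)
Your proposal is correct and matches the paper's proof in its essential strategy: the paper likewise chains the soundness and strong completeness of $\mathbb{K}^\Delta$ with respect to Kripke frames against Thm.~\ref{thm.comp-cl} to get $\Gamma\vDash\phi\iff\Gamma\vdash_{\mathbb{K}^\Delta}\phi\iff\Gamma\Vvdash_{qf}\phi$. Your direct semantic argument for the direction $\Gamma\Vvdash_{qf}\phi\Rightarrow\Gamma\vDash\phi$ via the qf-variation is a valid (and slightly more self-contained) substitute for one half of that chain, but it does not change the overall route.
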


\begin{proof}
By the soundness and strong completeness of $\mathbb{K}^\Delta$ with respect to the class of all Kripke frames (cf. e.g.~\cite{Fanetal:2015}), $\Gamma\vdash_{\mathbb{K}^\Delta}\phi$ iff $\Gamma\vDash\phi$. Then using Thm.~\ref{thm.comp-cl}, we have that $\Gamma\Vdash_{qf}\phi$ iff $\Gamma\vDash\phi$.
\end{proof}

\weg{\section{Applications}

We now apply the soundness and completeness theorem to prove some validities and invalidities.
\begin{proposition}
$\Delta p\to p$ is not valid on the class of quasi-filter frames.
\end{proposition}

\begin{proof}
We will construct a frame which is quasi-filter but not a frame satisfying condition $(t)$ (thus by Prop.~\ref{prop.define-t}, it is not a frame validating $\Delta p\to p$), then we arrive at a contradiction.

Consider the following simple frame: $\mathcal{F}=\lr{S,N}$, where $S=\{s\}$, $N(s)=\{\emptyset, S\}$. Since $\emptyset\in N(s)$ but $s\notin \emptyset$, we obtain that $\mathcal{F}$ does not satisfy $(t)$.

On the other hand, $\mathcal{F}$ is a quasi-filter: first, $(1)$, $(2)$ and $(4)$ are straightforward. We need only to check the property $(3)$: $N(s)$ is closed under supersets or co-supersets. Suppose $X\in N(s)$, then $X=\emptyset$ or $X=S$. If it is the case that $X=\emptyset$, then $(S\backslash X)\cup Z=S\in N(s)$; otherwise, $X\cup Y=S\in N(s)$.
\end{proof}

This also shows that $\nabla p$ is not provable in $\SPLKw$, by Coro.~\ref{coro.define-d}.

\begin{proposition}
$\Delta(p\to q)\land\Delta(\neg p\to q)\to\Delta q$ is valid on the class of quasi-filter frames.
\end{proposition}

\begin{proof}
Since quasi-filter frames satisfy $(i)$, and $(wi)$ is weaker than $(i)$, thus they also satisfy $(i)$. Then by Prop.~\ref{prop.define-wi}, we show the proposition.
\end{proof}

By completeness of $\SPLKw$ (Thm.~\ref{thm.comp-cl}), $\Delta(p\to q)\land\Delta(\neg p\to q)\to\Delta q$ is provable in $\SPLKw$.}

\weg{\section{An incompleteness result}

In this section, we show an incompleteness result. In~\cite{DBLP:journals/ndjfl/Kuhn95}, to axiomatize the contingency logic over the class of all frames, a key axiom schema (denoted by {\bf A3} there) was proposed: $\Delta \phi\land\nabla(\phi\vee\psi)\to\Delta(\neg\phi\vee\chi)$. Various but equivalent forms have been presented since, such as $\Delta\phi\to[\Delta(\psi\to \phi)\vee\Delta(\phi\to\chi)]$ in~\cite{Zolin:1999}, $O\phi\to O(\phi\vee\psi)\vee O(\neg\phi\vee\chi)$ in~\cite{DBLP:journals/ndjfl/Humberstone02}, $\Delta\phi\to\Delta(\phi\to\psi)\vee\Delta(\neg\phi\to\chi)$ in~\cite{Fanetal:2014,Fanetal:2015}.\footnote{In fact, a rule equivalent to the axiom {\bf A3} has been already proposed in~\cite{Humberstone95}, denoted by (NCR)$_1$ there: $\dfrac{\phi\to\psi_0~~~\neg\phi\to\psi_1}{\Delta\phi\to(\Delta\psi_0\vee\Delta\psi_1)}$.} However, it was also argued in~\cite[pp.~110-111]{DBLP:journals/ndjfl/Humberstone02} that this axiom can be simplified as $O\phi\to O(\phi\vee\psi)\vee O(\neg\phi\vee\psi)$, and then used in~\cite[Subsec.~4.1.2]{Wang:2016} as an axiom. Note that the difference between $O\phi\to O(\phi\vee\psi)\vee O(\neg\phi\vee\chi)$ and $O\phi\to O(\phi\vee\psi)\vee O(\neg\phi\vee\psi)$, in that $\chi$ is an arbitrary formula in the former, whereas $\chi$ is a fixed formula $\psi$ depending on the first occurrence of $\psi$ in the latter. Since both $\Delta\phi\lra\Delta\neg\phi$ and $\dfrac{\phi\lra\psi}{\Delta\phi\lra\Delta\psi}$ are valid in our neighborhood semantics, $\Delta\phi\to \Delta(\phi\vee\psi)\vee \Delta(\neg\phi\vee\psi)$ and $\Delta\phi\to \Delta(\phi\vee\psi)\vee \Delta(\neg\phi\vee\chi)$ can be replaced, respectively, with $\Delta\phi\to\Delta(\phi\to\psi)\vee\Delta(\neg\phi\to\psi)$ and $\Delta\phi\to\Delta(\phi\to\chi)\vee\Delta(\neg\phi\to\psi)$.

Due to the definitions of semantics and the model, in some sense, $\Delta$ is a combination of necessity and contingency operators.}
\section{Conclusion and Discussions}

In this paper, we proposed a new neighborhood semantics for contingency logic, which simplifies the original neighborhood semantics in~\cite{FanvD:neighborhood} but keeps the logic the same. This new perspective enables us to define the notions of bisimulation for contingency logic over various model classes, one of which can help us understand the essence of nbh-$\Delta$-bisimulation, and obtain the corresponding Hennessy-Milner Theorems, in a relatively easy way. Moreover, we showed that for this logic, almost all the ten neighborhood properties, which are undefinable under the old semantics, are definable under the new one. And we also had some simple results on axiomatizations. Besides, under the new semantics, contingency logic has the same expressive power as standard modal logic. We conjecture that our method may apply to other non-normal modal logics, such as logics of unknown truths and of false beliefs. We leave it for future work.

Another future work would be axiomatizations of monotonic contingency logic and regular contingency logic under the old neighborhood semantics. Note that our axiomatizations $\mathbb{M}^\Delta$ and $\mathbb{R}^\Delta$ are not able to be transformed into the corresponding axiomatizations under the old semantics, since our underlying frames are $c$-frames. For example, although we do have~$\Vvdash_{cs}\Delta(\phi\land\psi)\to\Delta\phi\land\Delta \psi$, we do {\em not} have~$\Vdash_s\Delta(\phi\land\psi)\to\Delta\phi\land\Delta \psi$; consequently, although $\mathbb{M}^\Delta$ is sound and strongly complete with respect to the class of $cs$-frames under the new neighborhood semantics, it is {\em not} sound with respect to the class of $s$-frames under the old one. Thus the axiomatizations of these logics under the old neighborhood semantics are still open.\footnote{Update: These two open questions have been answered in~\cite{Fan:2017b}.} %Another future work is to characterize {\bf CL} within {\bf ML} and within {\bf FOL} over various neighborhood model classes.

\bibliographystyle{plain}
\bibliography{biblio2014,biblio2016}
\end{document}